\numberwithin{equation}{section}
\newtheorem{maintheorem}{Theorem}
\newtheorem{theorem}{Theorem}[section]
\newtheorem*{theorem*}{Theorem}
\newtheorem{lemma}[theorem]{Lemma}
\newtheorem{claim}[theorem]{Claim}
\newtheorem{proposition}[theorem]{Proposition}
\theoremstyle{definition}{

\newtheorem*{definition*}{Definition}

\newtheorem*{example*}{Example}
\newtheorem{remark}[theorem]{Remark}
\newtheorem*{remark*}{Remark}
}
\newcommand{\R}{\mathbb R}
\newcommand{\Z}{\mathbb Z}
\newcommand{\deq}{\stackrel{\scriptscriptstyle\triangle}{=}}
\newcommand{\E}{\mathbb{E}}
\renewcommand{\P}{\mathbb{P}}
\DeclareMathOperator{\var}{Var}
\DeclareMathOperator{\Cov}{Cov}
\newcommand{\tmix}{t_\textsc{mix}}
\newcommand{\tv}{{\textsc{tv}}}
\newcommand{\Po}{\operatorname{Po}}
\newcommand{\one}{\mathbbm{1}}
\newcommand{\red}{\textsc{Red}}
\newcommand{\blue}{\textsc{Blue}}
\newcommand{\green}{\textsc{Green}}
\renewcommand{\epsilon}{\varepsilon}
\renewcommand{\phi}{\varphi}
\newcommand{\cG}{\mathcal{G}}
\newcommand{\cC}{\mathcal{C}}
\newcommand{\cE}{\mathcal{E}}
\newcommand{\cJ}{\mathcal{J}}
\newcommand{\cF}{\mathcal{F}}
\newcommand{\cM}{\mathcal{M}}
\newcommand{\cU}{\mathcal{U}}
\newcommand{\cX}{\mathcal{X}}
\DeclareMathOperator{\sign}{sign}
\newcommand{\len}{{\mathfrak{L}}}
\newcommand{\sm}{{\mathfrak{m}}}
\newcommand{\anim}{{\mathfrak{W}}}
\newcommand{\uni}{{(\textsc{u})}}
\newcommand{\sH}{\mathscr{H}}
\newcommand{\hsH}{\hat{\sH}}
\newcommand{\hcC}{\hat{\cC}}
\newcommand{\htau}{\hat{\tau}}
\newcommand{\hcX}{\hat{\cX}}
\newcommand{\tcut}{t_\sm}
\newcommand{\scut}{s_\star}
\newcommand{\tpluss}{t_\star}
\newcommand{\tminuss}{t_\star^-}
\date{}
\begin{document}
\title{Universality of cutoff for the Ising model}

\author{Eyal Lubetzky}
\address{Eyal Lubetzky\hfill\break
Microsoft Research\\
One Microsoft Way\\
Redmond, WA 98052-6399, USA.}
\email{eyal@microsoft.com}
\urladdr{}

\author{Allan Sly}
\address{Allan Sly\hfill\break
Department of Statistics\\
UC Berkeley\\
Berkeley, CA 94720, USA.}
\email{sly@stat.berkeley.edu}
\urladdr{}

\begin{abstract}
 On any locally-finite geometry, the stochastic Ising model is known to be contractive when the inverse-temperature $\beta$ is small enough, via classical results of Dobrushin and of Holley in the 1970's. By a general principle proposed by Peres, the dynamics is then expected to exhibit cutoff. However, so far cutoff for the Ising model has been confirmed mainly for lattices, heavily relying on amenability and log Sobolev inequalities. Without these, cutoff was unknown at any fixed $\beta>0$, no matter how small, even in basic examples such as the Ising model on a binary tree or a random regular graph.

We use the new framework of information percolation to show that, in any geometry, there is cutoff for the Ising model at high enough temperatures. Precisely, on any sequence of graphs with maximum degree $d$, the Ising model has cutoff provided that $\beta<\kappa/d$ for some absolute constant $\kappa$ (a result which, up to the value of $\kappa$, is best possible). Moreover, the cutoff location is established as the time at which the sum of squared magnetizations drops to 1, and the cutoff window is $O(1)$, just as when $\beta=0$.

Finally, the mixing time from almost every initial state is not more than a factor of $1+\epsilon_\beta$ faster then the worst one (with $\epsilon_\beta\to0$ as $\beta\to 0$), whereas the uniform starting state is at least $2-\epsilon_\beta$ times faster.
\end{abstract}

\maketitle

\vspace{-0.75cm}

\section{Introduction}

Classical results going back to Dobrushin~\cite{Dobrushin} and to Holley~\cite{Holley1} in the early 1970's and continuing with the works of Dobrushin and Shlosman~\cite{DoSh} and of Aizenman and Holley~\cite{AH} show that, if $G$ is any graph on $n$ vertices with maximum degree $d$, the Glauber dynamics for the Ising model on $G$ exhibits a rapid convergence to equilibrium in total-variation distance at high enough temperatures. Namely, if the inverse-temperature $\beta$ is at most $c_0 /d$ for some absolute $c_0>0$ then the continuous-time dynamics is contractive, whence coupling techniques show that the total-variation mixing time is $O(\log n)$.

A known consequence of contraction is that the spectral gap of the dynamics is bounded away from 0, and so, by a general principle proposed by Peres in 2004 (addressing whether or not the product of the spectral gap and mixing time diverges with $n$), one expects the \emph{cutoff phenomenon}\footnote{sharp transition in the
$L^1$-distance of a finite Markov chain from equilibrium, dropping quickly
from near 1 to near 0.} to occur. (For more on the cutoff phenomenon, discovered in the early 80's by Aldous and Diaconis, see~\cites{AD,Diaconis}.)
Concretely, Peres conjectured~(\cite{LLP}*{Conjecture~1},\cite{LPW}*{\S23.2}) cutoff for the Ising model on any sequence of transitive graphs when the mixing time is $O(\log n)$, and in particular in the range $\beta < c_0/d$ as above.

This universality principle, whereby cutoff should accompany high enough temperatures in any underlying geometry, is supported by the heuristic that at small enough $\beta$ the model should qualitatively behave as if $\beta=0$. The latter, equivalent to random walk on the hypercube,
was one of the first examples of cutoff, established with an $O(1)$-cutoff window by Aldous~\cite{Aldous}, and refined in~\cites{DiSh2,DGM}. Thus, one may further expect cutoff for the Ising model with an $O(1)$-window provided that $\beta$ is small enough.

In contrast, cutoff for the Ising model has so far mainly been confirmed on $\Z^d$~\cites{LS1,LS3}, via proofs that hinged on log-Sobolev inequalities (see~\cites{DS1,DS2,DS,SaloffCoste}) that are known to hold for the Ising model on the lattice~\cites{HoSt1,MO,MO2,MOS,Martinelli97,SZ1,SZ3} as well as on the sub-exponential growth rate of balls in the lattice.

\begin{figure}[t]
\raisebox{-2.5cm}{
\includegraphics[width=.3\textwidth]{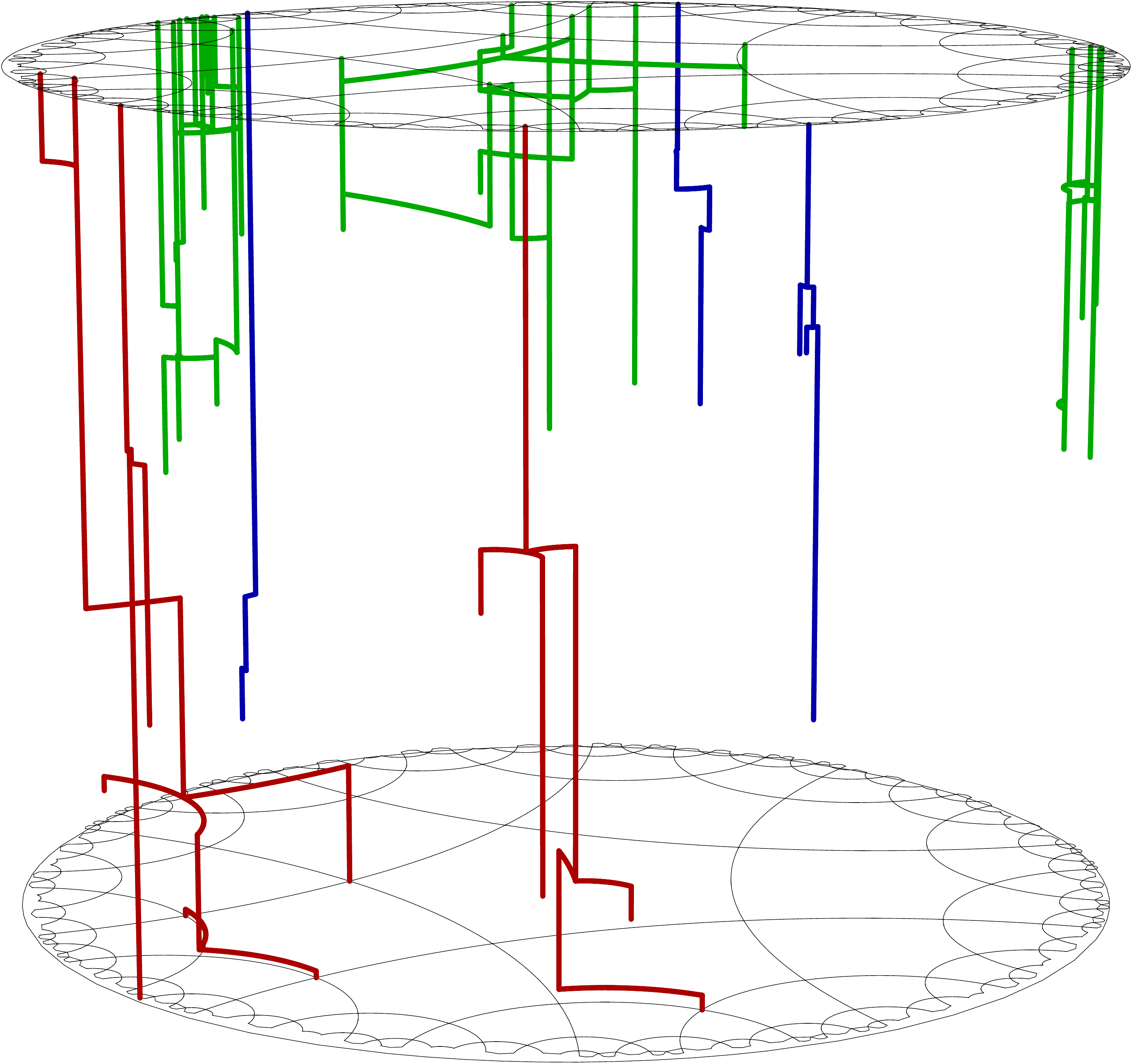}}
\hspace{.75cm}
\begin{tabular}{l}
\includegraphics[width=.55\textwidth]{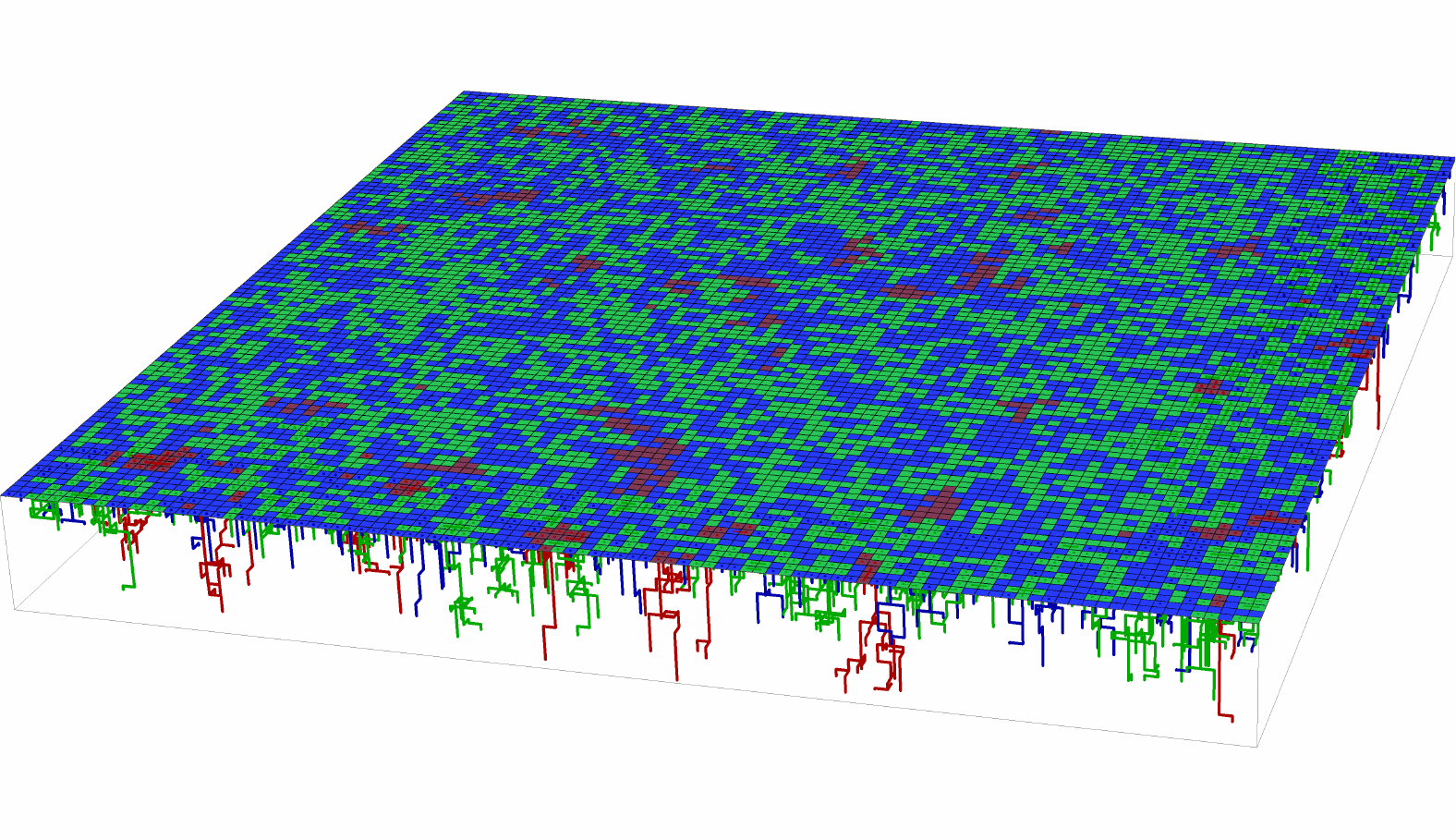} \vspace{-1.1cm}
\\
\includegraphics[width=.195\textwidth]{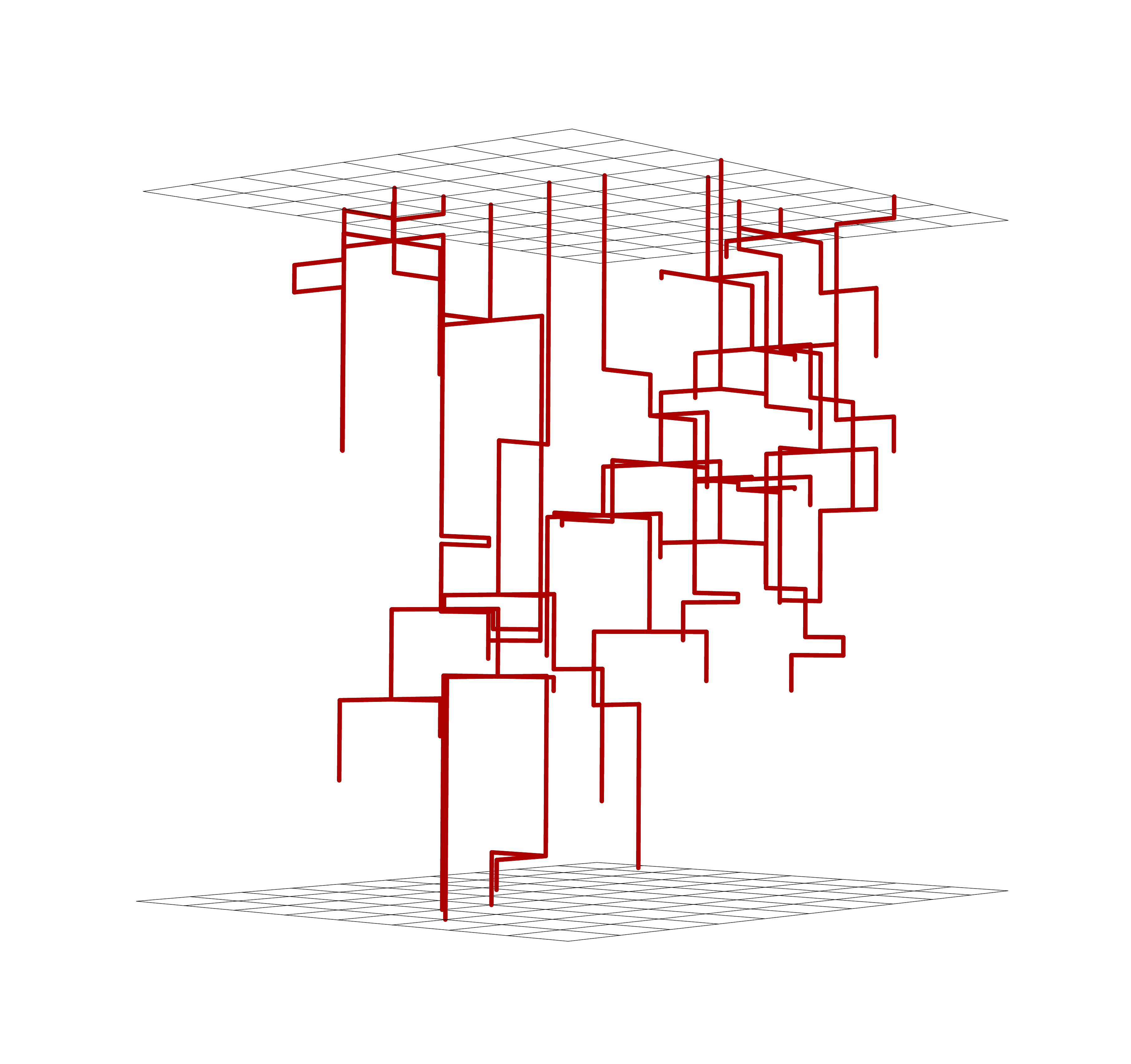}
\raisebox{0.05cm}{\includegraphics[width=.09\textwidth]{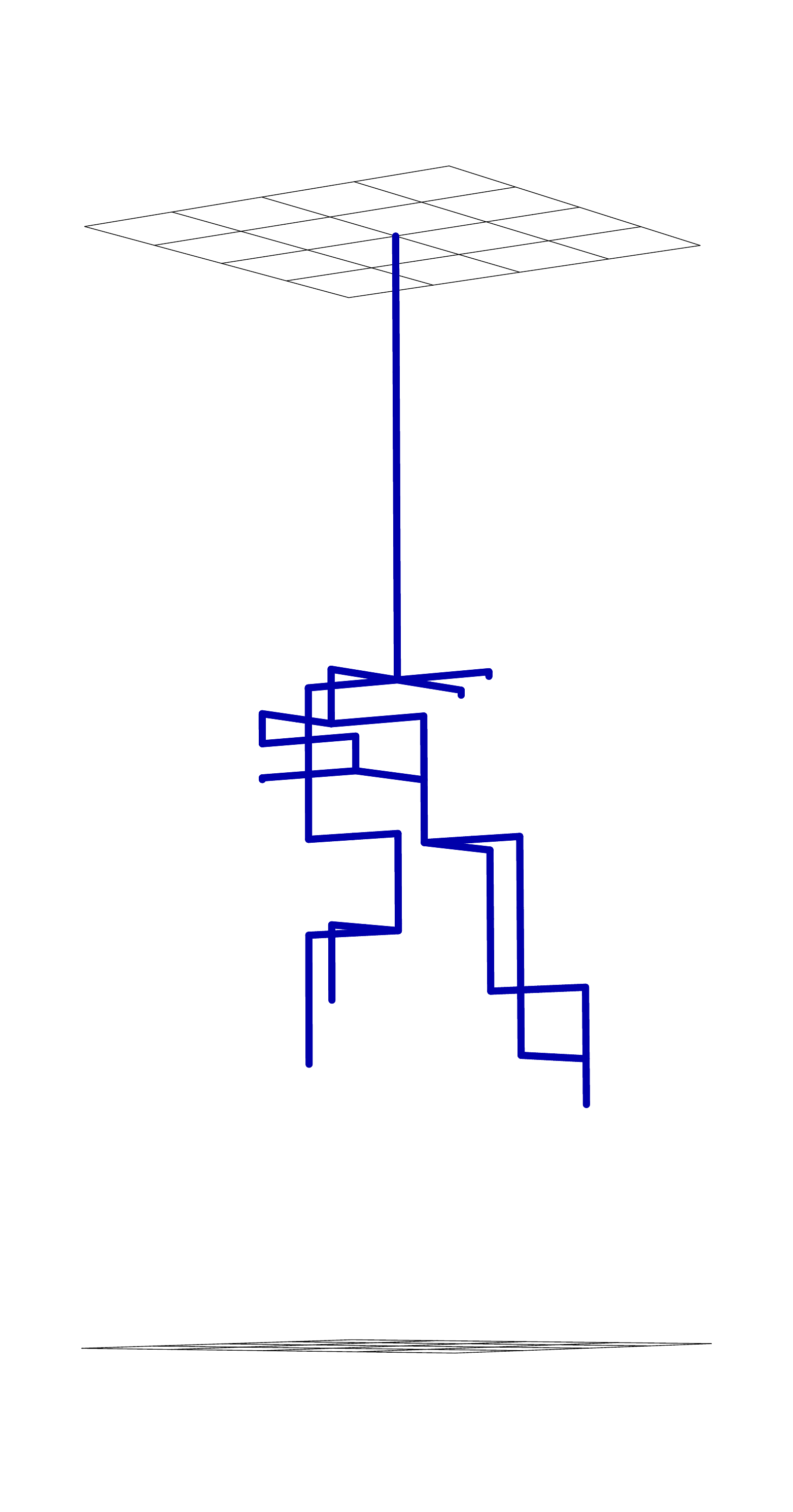}}
\includegraphics[width=.15\textwidth]{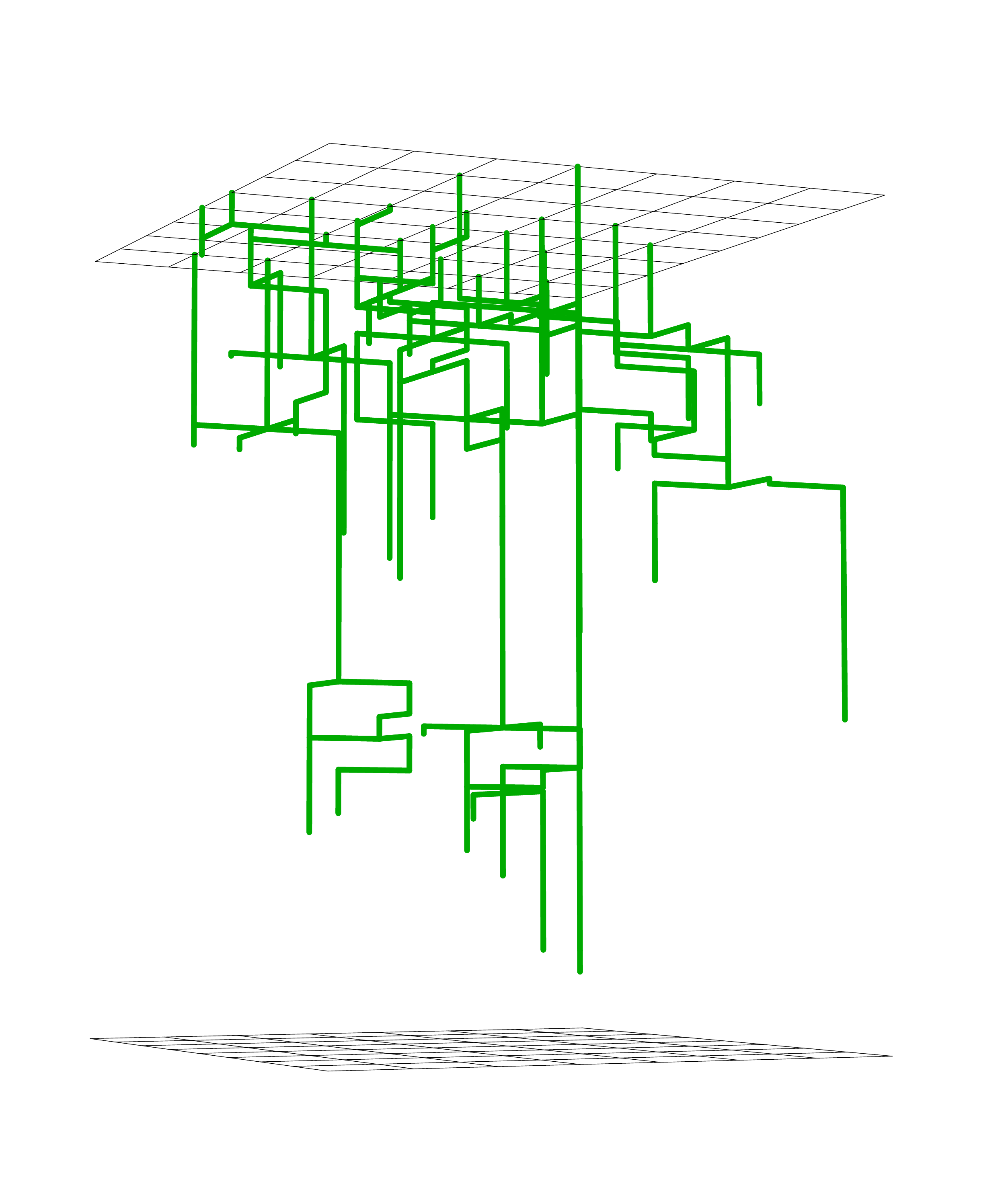}
\end{tabular}
\vspace{-0.4cm}
\caption{Information percolation clusters for the stochastic Ising model on two geometries: hyperbolic graph (left, showing largest 3 clusters of each type) and the lattice $\Z_{100}^2$ (right). A cluster is red if it survives to time 0, blue if it dies out and is the history of a single vertex, and green o/w.}
\label{fig:hyperbol}
\end{figure}

\begin{figure}[t]
\includegraphics[width=.475\textwidth]{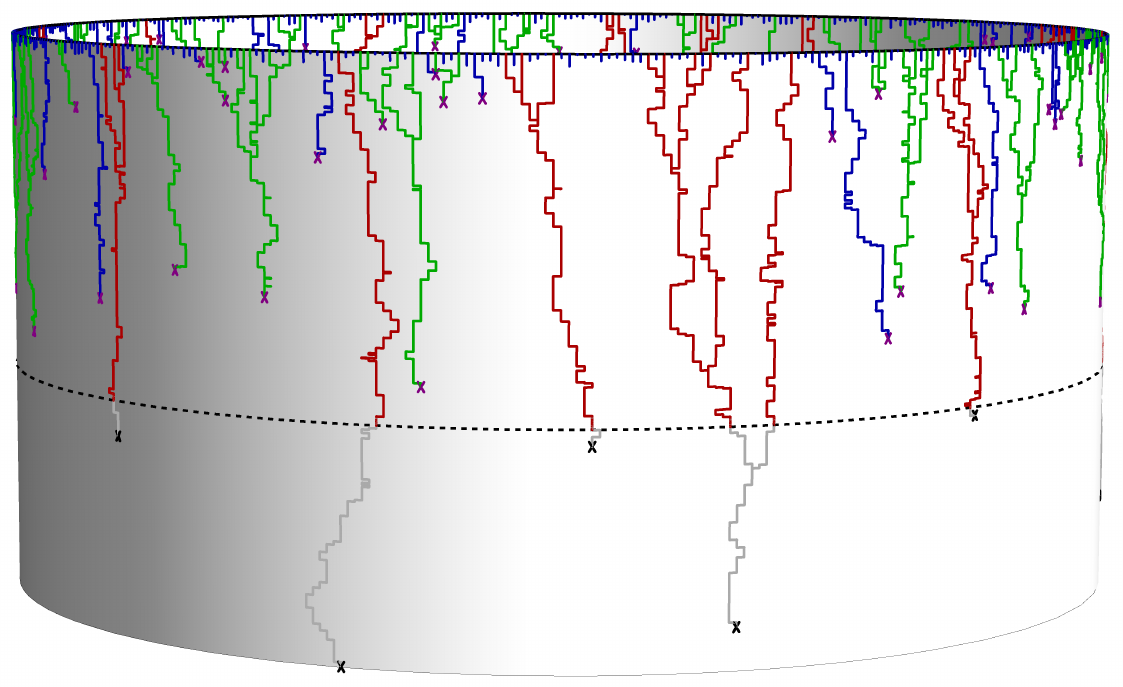}
\raisebox{-0.07cm}{
\includegraphics[width=.48\textwidth]{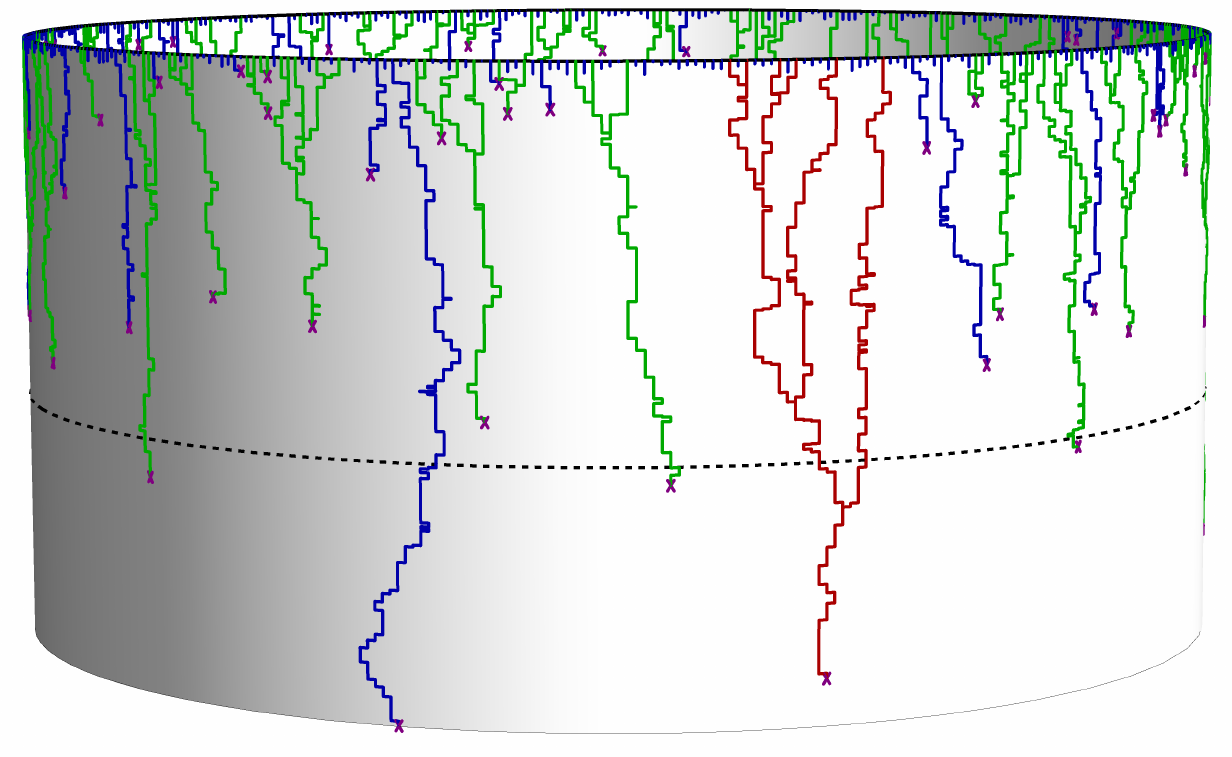}}
\vspace{-0.2cm}
\caption{Flavor of information percolation for analyzing random initial states in 1\textsc{d} Ising model:\\
 On the left, the standard framework (red clusters are those reaching $t=0$) for worst-case analysis. On the right, red clusters are redefined as those coalescing \emph{below} $t=0$ for the annealed analysis.}
\label{fig:clusters-1d}
\end{figure}

Even before requiring these powerful log-Sobolev inequalities, the restriction to sub-exponential growth rate automatically precluded the analysis of examples as basic as the Ising model on a binary tree at \emph{any} small $\beta>0$, or on an expander graph (e.g., a random regular graph), the hypercube, etc.

Here, using the framework of information percolation that we introduced in the companion paper~\cite{LS4}, we confirm that on any sequence of graphs with maximum degree $d$, cutoff indeed occurs whenever $\beta d$ is small enough, and with an $O(1)$-window (just as when $\beta=0$).
Furthermore, we analyze the effect of the initial state on the mixing time (e.g., a warm start of i.i.d.\ spins vs.\ the all-plus starting state).

\subsection{Results}
Our first result establishes that, on any geometry, at high enough temperature there is cutoff within an $O(1)$-window around the point
\begin{equation}\label{eq-t*-def}
\tcut = \inf\big\{\; t>0 \;:\; \mbox{$\sum_v \sm_t(v)^2 \leq 1$} \;\big\}\,,
\end{equation}
where $\sm_t(v)$ is the magnetization at a vertex $v\in V$ at time $t>0$, i.e.,
\begin{align}\label{eq-mag-v}\sm_t(v) = \E X_t^+(v)\,,\end{align}
with $X_t^+$ denoting the dynamics started from all-plus.
Note that on a transitive graph (such as $\Z_n^d$), the point $\tcut$ coincides with the time at which $\sum_v\sm_t(v)$ drops to a square-root of the volume, which has the intuitive interpretation that mixing occurs once the expected sum of spins in $X_t^+$ drops within the normal deviations in the Ising measure. However, it turns out that for general (non-transitive) geometries (such as trees) it is the sum of \emph{squared} magnetizations $\sum_v \sm_t(v)^2$ that governs the mixing.

\begin{maintheorem}\label{mainthm-gen}
There exist absolute constants $\kappa, C > 0$ such that the following holds. Let $G$ be a graph on $n$ vertices with maximum degree $d$. For any fixed $0<\epsilon<1$ and large enough $n$, the continuous-time Glauber dynamics for the Ising model on $G$ with inverse-temperature $0\leq\beta<\kappa/ d$ satisfies
\begin{align*}
 \tmix(1-\epsilon)&\geq \tcut - C\log(1/\epsilon)  \,,\\
   \tmix(\epsilon) &\leq \tcut + C\log(1/\epsilon)\,.
 \end{align*}
In particular, on any sequence of such graphs the dynamics has cutoff with an $O(1)$-window around $\tcut$.
\end{maintheorem}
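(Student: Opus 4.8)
The plan is to first locate $\tcut$ through an elementary differential inequality for the magnetization vector $(\sm_t(v))_{v\in V}$, then derive the lower bound from a second‑moment (Wilson‑type) argument started at all‑plus, and the upper bound — uniformly over the starting configuration — from the information percolation machinery of \cite{LS4}. For the first step I would use the monotone (Holley) coupling, in which $\sm_t(v)=\P(X_t^+(v)\neq X_t^-(v))$; this equals $\E X_t^+(v)$ because the zero‑field Ising measure is invariant under the global spin‑flip, so $\E_\pi\sigma(v)=0$. Since $\tanh$ is $1$‑Lipschitz, a disagreement at $v$ heals at rate $1$ and is created only through disagreements at the neighbours of $v$, each contributing rate at most $\beta$, giving $\frac{d}{dt}\sm_t(v)\le-\sm_t(v)+\beta\sum_{w\sim v}\sm_t(w)$; multiplying by $\sm_t(v)$, summing over $v$, and using $2\sm_t(v)\sm_t(w)\le\sm_t(v)^2+\sm_t(w)^2$ yields
\begin{equation*}
\frac{d}{dt}\sum_v\sm_t(v)^2\ \le\ -2(1-\beta d)\sum_v\sm_t(v)^2 .
\end{equation*}
As $t\mapsto\sum_v\sm_t(v)^2$ is continuous and decreases from $n$ to $0$, $\tcut$ is well defined with $\sum_v\sm_{\tcut}(v)^2=1$, and the displayed bound gives $\sum_v\sm_{\tcut-c}(v)^2\ge e^{2(1-\beta d)c}$ and $\sum_v\sm_{\tcut+c}(v)^2\le e^{-2(1-\beta d)c}$ for $c>0$, with exponents bounded below by $2(1-\kappa)>0$.

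\emph{Lower bound.} For $t_\ast=\tcut-C\log(1/\epsilon)$ I would test with the weighted magnetization $S(\sigma)=\sum_v\sm_{t_\ast}(v)\,\sigma(v)$. Then $\E[S(X_{t_\ast}^+)]=\sum_v\sm_{t_\ast}(v)^2=:M\ge\epsilon^{-2(1-\kappa)C}$ whereas $\E_\pi S=0$, and both $\var(S(X_{t_\ast}^+))$ and $\var_\pi(S)$ are $O(M)$: by $2|ab|\le a^2+b^2$ the off‑diagonal part is at most $\sum_u\sm_{t_\ast}(u)^2\sum_{v\neq u}|\Cov(\sigma_u,\sigma_v)|$, and throughout the Dobrushin regime $\beta d<\kappa$ the inner sum is $O(1)$ uniformly — for $\pi$ by the classical $\sum_{k\ge0}(\beta d)^k$ bound, and for $X_{t_\ast}^+$ because two vertex‑disjoint histories make the corresponding spins independent and the chance they meet decays summably in graph distance. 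Chebyshev's inequality on $\{S\le M/2\}$ then gives $d_\tv(X_{t_\ast}^+,\pi)\ge 1-O(1/M)$, which is $\ge 1-\epsilon$ once $C$ is a large enough absolute constant and $\epsilon$ is bounded away from $1$; the remaining range of $\epsilon$ follows because $\epsilon\mapsto\tmix(1-\epsilon)$ is non‑decreasing.

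\emph{Upper bound.} Here I would invoke \cite{LS4}. Using the graphical representation, build for each $v$ the backward history $\hsH_{v,t}\subseteq V\times[0,t]$ and partition $\bigcup_v\hsH_{v,t}$ into clusters, coloured \emph{red} if they reach time $0$ and \emph{blue}/\emph{green} otherwise (as in Figure~\ref{fig:hyperbol}). When $\beta d$ is small the Glauber update is, with probability $1-O(\beta d)$, an unbiased coin flip independent of the neighbours, so the histories form a subcritical branching‑type structure — this is exactly where $\kappa$ is fixed — with exponential tails on cluster sizes and with the history of a given vertex reaching time $0$ only with probability $O(e^{-ct})$. Conditioning on the red clusters, the framework couples $X_t^\sigma$, for \emph{any} $\sigma$, to a stationary chain agreeing with it off the random set of red vertices; in the resulting two‑copy $L^2$ bound non‑red clusters contribute a factor $1$ while each red cluster contributes a factor $1+O(b^2)$ with $|b|$ at most the magnetization‑type bias attained when its time‑$0$ footprint is monochromatic (the worst case over $\sigma$, by monotonicity), and the exponential tails allow one to exponentiate, giving
\begin{equation*}
4\,d_\tv(X_t^\sigma,\pi)^2\ \le\ \exp\!\Bigl(C'\sum_v\sm_t(v)^2\Bigr)-1+o(1)\qquad\text{uniformly in }\sigma .
\end{equation*}
Taking $t=\tcut+C\log(1/\epsilon)$ and $\sum_v\sm_t(v)^2\le e^{-2(1-\beta d)C\log(1/\epsilon)}\le\epsilon^{2(1-\kappa)C}$, one picks $C=C(\kappa)$ large enough (absolute, since $\kappa$ is) that the right‑hand side is $\le\epsilon^2$, so $\max_\sigma d_\tv(X_t^\sigma,\pi)\le\epsilon$; the regime $\epsilon$ close to $1$ is covered by the same estimate a bounded amount past $\tcut$.

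\emph{Main obstacle.} The hard part will be the upper bound: proving that the information‑percolation clusters are subcritical on an arbitrary bounded‑degree graph with an \emph{explicit} threshold $\kappa$, and running the $L^2$ estimate so that the red‑cluster corrections aggregate to exactly $\sum_v\sm_t(v)^2$ — thereby replacing the log‑Sobolev inequalities and sub‑exponential ball growth used on $\Z^d$ by purely local, subcriticality‑based reasoning. The inhomogeneity of the magnetizations on non‑transitive graphs is precisely why the relevant statistic, hence the cutoff location \eqref{eq-t*-def}, is $\sum_v\sm_t(v)^2$ rather than a single global magnetization, and keeping every bound uniform in the starting state is the extra bookkeeping that Theorem~\ref{mainthm-gen} demands beyond the all‑plus case.
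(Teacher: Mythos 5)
Your lower bound is essentially the paper's argument: the differential inequality of Claim~\ref{clm:MagnetDecay}, the weighted magnetization $f(\sigma)=\sum_v\sm_{\tminuss}(v)\sigma(v)$, and a variance bound $\var\le\gamma\,\E Y$ via a one-vertex covariance sum (the paper's Claim~\ref{clm:Correlation}, which proves it through the cluster-size moment of Lemma~\ref{lem-todo} rather than the ``decays in graph distance'' heuristic you invoke, which is not available on expanders).

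The upper bound is where the gap is. You write that an update is an unbiased coin flip with probability $1-O(\beta d)$ and otherwise reads the neighbours, so that ``the histories form a subcritical branching-type structure --- this is exactly where $\kappa$ is fixed.'' With that naive history construction this is simply false for $\beta\asymp 1/d$: a non-oblivious update replaces a single strand by \emph{all} $d$ of its neighbours, and the exponential-moment bound one needs (Lemma~\ref{lem-todo}, with the factors $8^{|A|}$ coming from Lemma~\ref{lem:MP}) then requires $\beta d\,e^{\lambda d}<1$, forcing $\beta$ to be exponentially small in $d$. This is precisely the obstruction the paper highlights (``a na\"ive application of the method would require $\beta$ to be as small as about $d^{-d}$''), and it is resolved by a genuinely new ingredient, the discrete Fourier expansion of the update function in Lemma~\ref{lem-p(k,r)}: one rewrites the heat-bath rule so that each update reveals a \emph{random subset} $A$ of the neighbours, with $\binom{r}{k}p_{k,r}\le D_0(2\beta r)^k$, so a strand branches into $k$ new strands with probability decaying geometrically in $k$. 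That is what makes the cluster exponential moments finite at $\beta d<\kappa$ for an absolute $\kappa$, and your proposal contains no substitute for it.

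Two further inaccuracies worth flagging. First, you describe the upper bound as a coupling to a stationary chain via coupling-from-the-past; in this paper that device is reserved for Theorem~\ref{mainthm-ann-que}. For Theorem~\ref{mainthm-gen} one conditions on the green history $\sH_\green$, notes the blue clusters yield i.i.d.\ uniform spins, and applies the Miller--Peres $L^2$ lemma (Lemma~\ref{lem:MP}) with $R=V_\red$ against the uniform measure on $V\setminus V_\green$. Second, your ``each red cluster contributes $1+O(b^2)$'' with $b$ a magnetization bias sweeps under the rug the central technical step: Lemma~\ref{lem-Psi} bounds the conditional probability $\Psi_A$ that a fixed $A$ is a red cluster by $2^{|A|}\E\big[\one_{\{A\subset\hcC_v\}}e^{\htau_v}\sum_w\one_{\{w\in\hsH_A\}}\sm_{\tpluss}(w)\big]$, which ties the red-cluster probability to the magnetization at the coalescence point $w$ and, combined with Lemma~\ref{lem-todo} and the $\hsH$-modification (forced update at $\tpluss$, no deletions on $(\tpluss-1,\tpluss]$), is what allows the exponential moment to aggregate exactly to $\sum_w\sm_{\tpluss}(w)^2$. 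You correctly identify this aggregation as the ``main obstacle,'' but the outline you give does not supply the mechanism that makes it work.
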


\vspace{-0.1cm}

Apart from giving a first proof of cutoff for the Ising model on any tree / expander graph at $\beta>0$, note that the above theorem allows the maximum degree $d$ to depend on $n$ in any way, and so it applies, e.g., to the Ising model on the hypercube (with $d=\log_2 n$), a dense Erd\H{o}s-R\'enyi graph $\cG(n,\frac12)$, etc.

As mentioned above, the proof uses the new information percolation framework, which analyzes interactions between spins viewed as a percolation process in the space-time slab. As opposed to the application of this method in the companion paper~\cite{LS4} for the torus, various obstacles arise in the present setting due to the asymmetry between vertices and lack of amenability. Moreover, a na\"ive application of the method would require $\beta$ to be as small as about $d^{-d}$, and carrying it up to $\kappa/d$ (the correct dependence in $d$ up to the value of $\kappa$) required several novel ingredients, notably using a discrete Fourier expansion (see \S\ref{sec:fourier}) to prescribe update rules for the dynamics that would endow the resulting percolation clusters with a subcritical behavior.

Roughly put, the framework considers the dynamics at a designated time around $\tcut$, and for each site develops the history of updates that led to its final spin (tracing back branching to its neighbors). The resulting ``information percolation'' clusters in the space-time slab are then categorized into three types --- \red\ (those surviving to time zero and nontrivially depending on the initial state), \blue\ (those remaining which involve a unique ``ancestor'') and \green\ (all remaining clusters), as illustrated in Figure~\ref{fig:hyperbol}. The green clusters (which may exhibit complicated dependencies but are independent of the initial state) are taken out of the equation via conditioning, leaving behind a competition between blue clusters (whose ancestor vertices are i.i.d.\ uniform spins by symmetry) and red clusters. Controlling the latter, namely an exponential moment of their cumulative size, then establishes mixing.

Overall, the information percolation framework allows one to reduce challenging problems involving mixing and cutoff for the Ising model into simpler and tractable problems on subcritical percolation.

Furthermore, by analyzing not only on the size of the red clusters, but rather \emph{where} these hit the initial state at time zero, this framework opens the door to understanding the effect of the starting configuration on the mixing time (where sharp results on total-variation mixing for the Ising model were only applicable to worst-case starting states, usually via coupling techniques).

Our next result demonstrates this by comparing the worst-case mixing time (which is matched by the all-plus starting state up to an additive $O(1)$-term) with a typical starting configuration, and finally with the uniform starting configuration, i.e., each site is initialized by an independent uniform $\pm1$ spin.
Informally, we show that the uniform starting state is roughly at least twice faster compared to all-plus, but perhaps surprisingly, almost every deterministic starting state is about as slow as the worst one.

Formally, if $\mu_t^{(x_0)}$ is the distribution of the dynamics at time $t$ started from $x_0$ then $\tmix^{(x_0)}(\epsilon)$ is the minimal $t$ for which $\mu_t^{(x_0)}$ is within distance $\epsilon$ from equilibrium, and $\tmix^\uni(\epsilon)$ is the analogue for the average $2^{-n}\sum_{x_0}\mu_t^{(x_0)}$ (i.e., the \emph{annealed} version, as opposed to the \emph{quenched} $\tmix^{(X_0)}$ for a uniform $X_0$).
\begin{maintheorem}
  \label{mainthm-ann-que}
Consider continuous-time Glauber dynamics for the Ising model on an $n$-vertex graph $G$ with maximum degree at most some fixed $d>0$,
and define $\tcut$ as in~\eqref{eq-t*-def}.
For every $\epsilon>0$ there exists $\beta_0>0$ such that the following hold for any $0<\beta<\beta_0$ and any fixed $0<\alpha<1$ at large enough $n$.
\begin{enumerate}[\it1.]
  \item\label{it-ann} (Annealed) Uniform initial state:  $\tmix^\uni(\alpha) \leq (\frac12+\epsilon) \tcut$.
  \item\label{it-que} (Quenched) Deterministic initial state: $\tmix^{(x_0)}(\alpha) \geq (1-\epsilon) \tcut$ for almost every $x_0$, while $\tmix^{(+)}(\alpha)\sim \tcut$.
\end{enumerate}
\end{maintheorem}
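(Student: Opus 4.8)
The plan is to treat the two parts separately; in each case the engine is the information-percolation coupling of~\cite{LS4} run up to the designated time $t\asymp\tcut$, in which the spin of every site is resolved by tracing back the family of updates that produced it and the space-time slab is partitioned into coalescing clusters.

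\emph{Part~\eqref{it-que}.} For a fixed $x_0$ set $\sm_t^{(x_0)}(v)=\E X_t^{(x_0)}(v)$, $\ltwo_t(x_0)=\sum_v\sm_t^{(x_0)}(v)^2$, and consider $S(\sigma)=\sum_v\sm_t^{(x_0)}(v)\,\sigma_v$: then $\E_{\mu_t^{(x_0)}}[S]=\ltwo_t(x_0)$ while $\E_\pi[S]=0$ by the spin-flip symmetry. Since $\beta d$ is small, correlations decay exponentially, $\sum_w|\Cov_\pi(\sigma_v,\sigma_w)|\le C$ and (uniformly in $x_0$ and $t$, via the coupling) $\sum_w|\Cov_{\mu_t^{(x_0)}}(\sigma_v,\sigma_w)|\le C$, so that $\var_\pi(S)\vee\var_{\mu_t^{(x_0)}}(S)\le C\,\ltwo_t(x_0)$; hence $\ltwo_t(x_0)\to\infty$ forces $\|\mu_t^{(x_0)}-\pi\|_\tv\to1$. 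It thus suffices to show $\ltwo_{(1-\epsilon)\tcut}(x_0)\to\infty$ for a.e.\ $x_0$. By Cauchy--Schwarz $\E_{x_0}[\sm_t^{(x_0)}(v)^2]\ge\big(\E_{x_0}[x_0(v)\,\sm_t^{(x_0)}(v)]\big)^2$, and $\E_{x_0}[x_0(v)\,\sm_t^{(x_0)}(v)]$ is the expected coefficient of $x_0(v)$ in $X_t^{(x_0)}(v)$, which is at least $\P(v\text{ untouched in }[0,t])=e^{-t}$ (the prescribed update rule endows single-thread histories with only nonnegative signs); so $\E_{x_0}[\ltwo_t(x_0)]\ge ne^{-2t}=n^{\Omega(1)}$ at $t=(1-\epsilon)\tcut$, using $\tcut=(1+O(\beta d))\tfrac12\log n$ and $\beta d<\epsilon$. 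Concentration of $\ltwo_t(x_0)$ about its $x_0$-mean is a second-moment estimate on this quadratic form in $x_0$, whose off-diagonal is negligible because distinct histories rarely share an ancestor. Finally, $\tmix^{(+)}(\alpha)\sim\tcut$: the lower bound uses $S$ with $x_0\equiv+$, for which $\E_{\mu_t^{(+)}}[S]=\sum_v\sm_t(v)^2\gg1$ whenever $t\le\tcut-O(1)$; and $\tmix^{(+)}(\alpha)\le\tmix(\alpha)\le\tcut+O(1)$ by Theorem~\ref{mainthm-gen}.

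\emph{Part~\eqref{it-ann}.} Let $\bar\mu_t:=2^{-n}\sum_{x_0}\mu_t^{(x_0)}$ be the law at time $t$ of the dynamics from a uniform random configuration. The decisive observation is that the discrepancy between $\bar\mu_t$ and $\pi$ is a purely \emph{two-body} effect: both $\mathrm{unif}$ and $\pi$ have uniform single-site marginals and are invariant under the global spin flip, so the density $d(\mathrm{unif})/d\pi-1$ is orthogonal in $L^2(\pi)$ to the constants and to every $\sigma_v$, and to leading order equals $-\beta(H-\E_\pi H)$ with $H=\sum_{u\sim w}\sigma_u\sigma_w$. In the history picture this says that a coalescing family of histories reaching a single time-$0$ vertex $u$ is harmless --- every final spin it controls is a sign-copy of one uniform spin $x_0(u)$, whose law is unchanged if it is drawn from the marginal of $\pi$ instead --- so only clusters reaching two or more distinct time-$0$ vertices matter, equivalently (in the slab extended below time $0$) only clusters coalescing below $0$, and such a cluster carries a weight of order $\beta$, from the $\pi$-correlation of its surviving ancestors. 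Producing one requires two threads to survive from $t$ down to $0$, an event of probability of order $\asymp e^{-2(1-o_\beta(1))t}$; a second-moment accounting over the $O(|E|)$ relevant pairs of surviving threads then gives $\|\bar\mu_t-\pi\|_\tv\lesssim\beta\sqrt{|E|}\;e^{-2(1-o_\beta(1))t}$. As $|E|\le nd/2$ and $\tcut=(1+O(\beta d))\tfrac12\log n$, this is below $\alpha$ once $t\ge(\tfrac12+\epsilon)\tcut$, for $\beta$ small in terms of $\epsilon$. Heuristically: the uniform start, being spin-symmetric, has no one-body component, so the slowest mode of the annealed chain is the two-body mode --- relaxing on half the timescale of the one-body mode that sets $\tcut$ --- whence the factor $\tfrac12$.

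The annealed estimate is where the work lies: one has to make the ``single-ancestor cancellation'' precise and, above all, obtain the sharp constant. Any black-box reduction --- bounding $\|\bar\mu_t-\pi\|_\tv$ by $\E\,\|\mathrm{unif}|_A-\pi|_A\|_\tv$ over the random ancestor set $A$, or by a $\chi^2$ quantity --- loses a factor of two in the exponent (yielding only $\tmix^\uni(\alpha)\lesssim\tcut$), and indeed $\|d(\mathrm{unif})/d\pi-1\|_{L^2(\pi)}$ is exponentially large in $n$; the point is to exploit the explicit cluster structure, so that single surviving ancestors are seen to contribute identically to $\bar\mu_t$ and to $\pi$ and only the rarer coalescing-below-$0$ events survive. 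This is also where the discrete Fourier expansion of \S\ref{sec:fourier} is used: to prescribe an update rule under which ``survive as one thread'' (only nonnegative signs) and ``branch'' ($O(\beta)$ per branch) are cleanly separated, with the correct probabilities.
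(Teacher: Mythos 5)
You have correctly identified the two load-bearing ideas: Coupling From The Past to compare directly with stationarity, and the redefinition of red clusters so that only those with (at least) two threads surviving to time $0$ --- equivalently, coalescing only below $0$ in the extended slab --- remain dangerous, giving an $e^{-2t}$ decay rather than $e^{-t}$ and hence the factor-of-two speedup. This is precisely the paper's strategy (\S\ref{sec:ann-que}). However, your annealed paragraph overstates a difficulty: you claim that ``any black-box reduction \dots by a $\chi^2$ quantity \dots loses a factor of two,'' but Proposition~\ref{prop:anneal} does apply the Miller--Peres $L^2$ lemma --- it simply does so for the \emph{redefined} red/blue decomposition (blue now meaning ``coalesces to a single space-time point above time $0$,'' hence couples to $\pi$ by spin-flip symmetry). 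There is no loss once red is redefined, which is exactly the cancellation you describe heuristically. What your writeup is actually missing is the rigorous replacement for ``a second-moment accounting over the $O(|E|)$ relevant pairs'': the paper's Lemma~\ref{l:annealedPsi} notes that a red cluster forces $\len(\hcX_{v,0})\geq 2\tpluss$ together with $|\hsH_{\hcC_v}|\geq\anim(A)$, and then invokes Lemma~\ref{lem-todo} with $\eta$ close to $1$ to extract a factor $e^{-2\eta\tpluss}\ll n^{-1/2}$. Your pair-counting heuristic points in the right direction but is not a proof, and it is not obvious how to turn it into one without something like that exponential-moment bound.

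On the quenched side your route genuinely differs. You use $S(\sigma)=\sum_v\sm_t^{(x_0)}(v)\sigma_v$ and need $\ltwo_t(x_0)=\sum_v\sm_t^{(x_0)}(v)^2\gg1$ for a.e.\ $x_0$; the Cauchy--Schwarz step giving $\E_{x_0}[\ltwo_t(x_0)]\geq ne^{-2t}$ is fine, but a first moment alone does not imply an a.e.\ lower bound, so you do need the concentration of $\ltwo_t(x_0)$ over $x_0$ which you only gesture at (``off-diagonal is negligible''). The paper sidesteps this by choosing the statistic $f(\sigma)=\sum_u X_0(u)\sigma(u)$ instead: now $\E\big[\sum_u X_0(u)X_{\tminuss}(u)\big]\geq ne^{-\tminuss}$ averaged over both $X_0$ and the dynamics is immediate, the variance is $O(n)$ by a cluster estimate (Lemma~\ref{lem-todo} again), and Markov's inequality transfers the averaged Chebyshev bound to a.e.\ $x_0$. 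Both statistics should work, but the paper's choice avoids the $x_0$-concentration of a quadratic form entirely. In short: the conceptual skeleton matches, but the quenched concentration step and the annealed exponential-moment estimate are the substantive gaps to fill.
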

The delicate part in the proof of the above theorem is comparing the distribution at time $t$ directly to the Ising measure. One often bypasses this point by coupling the distributions started at worst-case states; here, however, that would fail as we are analyzing the dynamics well before these distributions can couple with high probability. Instead (and as demonstrated in the companion paper for analyzing the effect of initial states in the 1\textsc{d} Ising model), we appeal to the \emph{Coupling From The Past} method~\cite{PW}.

Rather than developing the information percolation clusters until reaching time zero, we continue until time $-\infty$, letting all clusters eventually die. The beautiful Coupling From The Past argument implies that, if we ignore the initial state altogether, the final configuration would be a perfect simulation of the Ising measure. Thus, the natural coupling the information percolation clusters allows one to compare the dynamics with the Ising measure, simply by considering the effect of replacing the spins generated along the interval $(-\infty,0]$ by those of the initial state.

Specifically for the annealed analysis, even if a cluster survives to time zero (and beyond) it might still be perfectly coupled to the stationary measure, e.g., a singleton strand (and more generally, a blue cluster) would receive a uniform spin both from the Ising measure and from the random initial state. Hence, we modify the framework by redefining red clusters as those in which \emph{at least two branches} of the cluster reach time zero, then proceed to merge in the interval $(-\infty,0)$, as illustrated in Figure~\ref{fig:clusters-1d}. It is this factor of 2 that eventually transforms into the factor of $2-\epsilon$ improvement in the mixing time.

\subsection*{Organization}
The rest of this paper is organized as follows.
In~\S\ref{sec:framework} we give the formal definitions of the above described framework, including several modification needed here (e.g., custom update rules to be derived from a Fourier expansion) and two lemmas analyzing the information percolation clusters. In~\S\ref{sec:cutoff} we prove the cutoff result in Theorem~\ref{mainthm-gen} modulo these technical lemmas, which are proved in~\S\ref{sec:cluster-analysis}. The final section, \S\ref{sec:ann-que}, is devoted to the effect of the initial states on mixing and the proof of Theorem~\ref{mainthm-ann-que}.

\section{Information percolation for the Ising model}\label{sec:framework}

\subsection{Preliminaries}
In what follows we set up standard notation for analyzing the mixing of Glauber dynamics for the Ising model; see~\cites{LS1,LS4} and the references therein for additional information.

 \subsubsection*{Mixing time and cutoff}\label{sec:prelim-cutoff}
Let $(X_t)$ be an ergodic finite Markov chain with stationary measure $\pi$.
An important gauge in MCMC theory for measuring the convergence of a Markov chain to stationarity is its total-variation mixing time. Denoted $\tmix(\epsilon)$ for a precision parameter $0<\epsilon<1$, it is defined as
\[ \tmix(\epsilon) \deq \inf\Big\{t \;:\; \max_{x_0 \in \Omega} \| \P_{x_0}(X_t \in \cdot)- \pi\|_\tv \leq \epsilon \Big\}\,,\]
where here and in what follows $\P_{x_0}$ denotes the probability given $X_0=x_0$, and the total-variation distance $\|\cdot\|_\tv$ between two probability measures $\nu_1,\nu_2$
on a finite space $\Omega$ is given by
\[
\|\nu_1-\nu_2\|_\tv = \max_{A\subset \Omega} |\nu_1(A)-\nu_2(A)| = \tfrac12\sum_{\sigma\in\Omega} |\nu_1(\sigma)-\nu_2(\sigma)| \,,
\]
i.e., half the $L^1$-distance between the two measures.

Addressing the role of the parameter $\epsilon$, the cutoff phenomenon is essentially the case where the choice of any fixed $\epsilon$ does not affect the asymptotics of $\tmix(\epsilon)$ as the system size tends to infinity. Formally, a family of ergodic finite Markov chains $(X_t)$, indexed by an implicit parameter $n$, is said to exhibit \emph{cutoff} (a concept going back to the pioneering works~\cites{Aldous,DiSh}) iff the following sharp transition in its convergence to stationarity occurs:
\begin{equation}
\label{eq-cutoff-def}
\lim_{n\to\infty} \frac{\tmix(\epsilon)}{\tmix(1-\epsilon)}=1 \quad\mbox{ for any $0 < \epsilon < 1$}\,.
\end{equation}
That is, $\tmix(\alpha)=(1+o(1))\tmix(\beta)$ for any fixed $0<\alpha<\beta<1$. The \emph{cutoff window} addresses the rate of convergence in~\eqref{eq-cutoff-def}: a sequence $w_n = o\big(\tmix(e^{-1})\big)$ is a cutoff window if $\tmix(\epsilon) = \tmix(1-\epsilon) + O(w_n)$ holds for any $0<\epsilon<1$ with an implicit constant that may depend on $\epsilon$.
Equivalently, if $t_n$ and $w_n$ are sequences with $w_n =o(t_n)$, we say that a sequence of chains exhibits cutoff at $t_n$ with window $w_n$ if
\[\left\{\begin{array}
  {r}\displaystyle{\lim_{\gamma\to\infty} \liminf_{n\to\infty}
 \max_{x_0 \in \Omega} \| \P_{x_0}(X_{t_n-\gamma w_n} \in \cdot)- \pi\|_\tv
  = 1}\,,\\
  \displaystyle{\lim_{\gamma\to \infty} \limsup_{n\to\infty}
 \max_{x_0 \in \Omega} \| \P_{x_0}(X_{t_n+\gamma w_n} \in \cdot)- \pi\|_\tv
  = 0}\,.
\end{array}\right.\]
Verifying cutoff is often quite challenging, e.g., even for simple random walk on an expander graph, no examples were known prior to~\cites{LS2,LSexp} (while this had been conjectured for almost all such graphs), and to date there is no known transitive example (while conjectured to hold for all transitive expanders).

\subsubsection*{Glauber dynamics for the Ising model}\label{sec:prelim-ising}
Let $G$ be a finite graph $G$ with vertex-set $V$ and edge-set $E$. The Ising model on $G$ is a distribution over the set $\Omega=\{\pm1\}^V$
of possible configurations, each
corresponding to an assignment of plus/minus spins to the sites in $V$. The probability of $\sigma \in \Omega$ is given by
\begin{equation}
  \label{eq-Ising}
  \pi(\sigma)  = Z^{-1} e^{\beta \sum_{uv\in E} \sigma(u)\sigma(v) } \,,
\end{equation}
where the normalizer $Z=Z(\beta,h)$ is the partition function.
The parameter $\beta$ is the inverse-temperature, which we always to take to be non-negative (ferromagnetic). These definitions extend to infinite locally finite graphs (see, e.g.,~\cites{Liggett,Martinelli97}).

The Glauber dynamics for the Ising model (the \emph{Stochastic Ising} model) is a family of continuous-time Markov chains on the state space $\Omega$, reversible w.r.t.\ the Ising measure $\pi$, given by the generator
\begin{equation}
  \label{eq-Glauber-gen}
  (\mathscr{L}f)(\sigma)=\mbox{$\sum_{u} c(u,\sigma) \left(f(\sigma^u)-f(\sigma)\right)$}
\end{equation}
where $\sigma^u$ for $u\in V$ is the configuration $\sigma$ with the spin at the vertex $u$ flipped.
We will focus on the two most notable examples of Glauber dynamics, each having an intuitive and useful graphical interpretation where each site experiences updates via an associated i.i.d.\ rate-one Poisson clock:
\begin{compactenum}[(i)]
\item \emph{Metropolis}: flip $\sigma(u)$ if the new state $\sigma^u$ has a lower energy (i.e., $\pi(\sigma^u)\geq \pi(\sigma)$), otherwise perform the flip with probability $\pi(\sigma^u)/\pi(\sigma)$.
    This corresponds to $  c(u,\sigma) = \exp\left(2\beta\sigma(u)\sum_{v \sim u}\sigma(y)\right)  \;\wedge\; 1 $.
\item \emph{Heat-bath}:  erase $\sigma(u)$ and replace it with a sample from the conditional distribution given the spins at its neighboring sites. This corresponds to $c(u,\sigma) = 1/\left[1+ \exp\left(-2\beta\sigma(u)\sum_{v \sim u}\sigma(v)\right)\right]$.
\end{compactenum}
It is easy to verify that these chains are indeed ergodic and reversible w.r.t.\ the Ising distribution $\pi$.
Until recently, sharp mixing results for this dynamics were obtained in relatively few cases, with cutoff only known for the complete graph~\cites{DLP,LLP} prior to the works~\cites{LS1,LS3}.

\subsection{Red, green and blue information percolation clusters}
In what follows, we describe the basic setting of the framework, which will be enhanced in \S\ref{sec:exp-framework} to support the setting of Theorem~\ref{mainthm-gen} (where the underlying geometry may feature exponential growth rate and we are in the range $\beta<\kappa/d$).

The \emph{update sequence} of the Glauber dynamics along an interval $(t_0,t_1]$ is the set of tuples of the form $(J,U,\tau)$, where $t_0< \tau\leq t_1$ is the update time,
$J\in V$ is the site to be updated and $U$ is a uniform unit variable.
Given this update sequence, $X_{t_1}$ is a deterministic function of $X_{t_0}$, right-continuous w.r.t.\ $t_1$.

We call a given update $(J,U,\tau)$ an \emph{oblivious update} iff $U \leq \theta$ for
\begin{equation}\label{eq-def-theta}
  \theta = \theta_{\beta,d} := 1 - \tanh(\beta d)\,,
\end{equation}
since in that situation one can update the spin at $J$ to plus/minus with equal probability (that is, with probability $\theta/2$ each) independently of the spins at the neighbors of the vertex $J$, and a properly chosen rule for the case  $U > \theta$ legally extends this protocol to the Glauber dynamics.

Consider some designated target time $\tpluss$ for analyzing the spin distribution of the dynamics on $G$.
The \emph{update history} of $X_{\tpluss}(v)$ going back to time $t$, denoted $\sH_v(t)$, is a subset $A\times\{t\}$ of the space-time slab $V\times \{t\}$, such that one we can determine $X_{\tpluss}(v)$ from the update sequence and spin-set $X_t(A)$.
The most basic way of defining $\{ \sH_v(t) : 0\leq t \leq \tpluss\}$ is as follows:
\begin{compactitem}[\noindent$\bullet$]
  \item List the updates in reverse chronological order as $\{(J_i,U_i,t_i)\}_{i\geq 1}$ (i.e., $t_i > t_{i+1}$ for all $i$),
      and initialize the update history by $\sH_v(t)=\{v\}$ for all $t\in [t_1,\tpluss]$.

   \item In step $i\geq1$, process the update $(J_i, U_i, t_i)$ to determine $\sH_v(t)$ for $t\in[t_{i+1},t_i)$:
       \begin{itemize}[\noindent\ndash]
       \item If $J_i\notin \sH_v(t_{i})$ then the history is unchanged, i.e., $\sH_v(t) = \sH_v(t_i)$ for all $t\in[t_{i+1},t_i)$.
       \item If $J_i\in \sH_v(t_{i})$ but $U_i \leq \theta$ then $J_i$ is removed, i.e., $\sH_v(t) = \sH_v(t_i) \setminus \{J_i\}$ for all $t\in[t_{i+1},t_i)$.
   \item Otherwise, replace $J_i$ by its neighbors $N(J_i)$, i.e., $\sH_v(t) = \sH_v(t_{i}) \cup N(J_i) \setminus \{J_i\} $ for all $t\in[t_{i+1},t_i)$.
       \end{itemize}
\end{compactitem}
The \emph{information percolation clusters} are the connected components of the graph on the vertex set $V$ where $(u,v)$ is an edge if $\sH_u(t)\cap \sH_v(t) \neq \emptyset $ for some $t\geq0$.
Denote by $\cC_v$ the cluster containing $v\in V$.

We will also consider clusters in the context of the full space-time slab. The cluster of a point $(w,r) \in V\times[0,\tpluss]$, denoted $\cX_{w,r}$, is the connected component of
$\bigcup\{\sH_v(t): v\in V,\, 0\leq t \leq \tpluss\}$ that contains $(w,r$).
(Thus, the cluster $\cC_v$ is identified with the intersection of $\cX_{v,\tpluss}$ with the slab $V\times\{\tpluss\}$.)

For any $A\subset V$ we use the notation $ \sH_A(t) = \bigcup_{v\in A}\sH_v(t)$, as well as $\sH_A(t_1,t_2) =\bigcup_{t_1\leq t\leq t_2}\sH_A(t)$ (both cases describing subsets of $V$). Omitting the time subscript altogether would refer to the full time interval: $\sH_A := \sH_A(0,\tpluss)$, so that, for instance, if $\cC\subset V$ is a cluster then $\sH_\cC$ is the set of all vertices ever visited by this cluster. (By a slight abuse of notation, we may write $\sH_\cX := \sH_\cC$ with $\cC$ the cluster that $\cX$ identifies with (i.e., the cluster $\cC$ such that $\cC \times \{\tpluss\} =  (V\times\{\tpluss\})\cap\cX $).)
A final useful notation in this context is the collective history of $V\setminus A$, defined as
\[
\sH_A^- = \left\{\sH_v(t) : v\notin A\,,\,t\leq \tpluss\right\}
\,.\]
The clusters are classified into three classes (identifying for this purpose
$\cC_v$ and $\cX_{v,\tpluss}$) as follows:
\begin{compactitem}
\item A cluster $\cC$ is \red\ if, given the update sequence, its final state $X_{\tpluss}(\cC)$ is a nontrivial function of the initial configuration $X_0$; in particular,
     its history must survive to time zero ($\sH_\cC(0)\neq\emptyset$).
\item A cluster $\cC$ is \blue\ if it is a singleton --- i.e., $\cC =\{v\}$ for some $v\in V$ --- whose history does not survive to time zero ($\sH_v(0)=\emptyset$).
\item Every other cluster $\cC$ is \green.
\end{compactitem}
Note that if a cluster is blue then its single spin at time $\tpluss$ does not depend on the initial state $X_0$, and so, by symmetry, it is a uniform $\pm1$ spin. (While a green cluster is similarly independent of $X_0$, as multiple update histories intersect, the distribution of its spin set $X_{\tpluss}(\cC)$
may become quite nontrivial.)

Let $V_\red$ denote the union of the red clusters, and let $\sH_\red$ be the its collective history --- the union of $\sH_v(t)$ for all $v\in V_\red$ and $0\leq t\leq\tpluss$ (with analogous definitions for blue/green).

A beautiful short lemma of Miller and Peres~\cite{MP} shows that, if a measure $\mu$ on $\{\pm1\}^V$ is given by sampling a variable $R\subset V$ and using an arbitrary law for its spins and a product of Bernoulli($\frac12$) for $V\setminus R$, then the $L^2$-distance of $\mu$ from the uniform measure is at most $\E 2^{|R\cap R'|}-1$ for i.i.d.\ copies $R,R'$. (See Lemma~\ref{lem:MP} below; also see~\cite{LS4}*{Lemma 4.3} for a generalization of this to a product of general measures, which becomes imperative for the information percolation framework at $\beta$ near criticality.)
Applied to our setting, if we condition on $\sH_\green$ and look at the spins of $V\setminus V_\green$ then $V_\red$ can assume the role of the variable $R$, as the remaining blue clusters are a product of Bernoulli($\frac12$) variables.

In this conditional space, since the law of the spins of $V_\green$, albeit potentially complicated, is independent of the initial state, we can safely project the configurations
on $V\setminus V_\green$ without it increasing the total-variation distance between the distributions started at the two extreme states.
Hence, a sharp upper bound on worst-case mixing will follow by showing for this exponential moment
\begin{equation}
  \label{eq-exp-moment-bound}
  \E \left[ 2^{|V_\red \cap V_\red'|} \mid \sH_\green\right] \to 1\quad\mbox{ in probability as }n\to\infty\,,
\end{equation}
by coupling the distribution of the dynamics at time $\tpluss$ from any initial state to the uniform measure.
Finally, with the green clusters out of the picture by the conditioning (which has its own toll, forcing various updates along history so that no other cluster would intersect with those nor become green), we can bound the probability that a subset of sites would become a red cluster by its ratio with the probability of all sites being blue clusters. Being red entails connecting the subset in the space-time slab, hence the exponential decay needed for~\eqref{eq-exp-moment-bound}.

\subsection{Enhancements of the framework: custom update rules and modified last unit interval}\label{sec:exp-framework}
We will consider the information percolation clusters developed as above from the designated time
\[ \tpluss = \tcut + \scut\quad\mbox{ for }\quad\scut = C \log(1/\epsilon)\]
where $C>0$ will be specified later, and $\epsilon>0$ is the parameter for the mixing time. However, instead of the standard procedure of developing the history, where an update at $v$ either deletes it from the history (via an oblivious update) or replaces it by its set of neighbors $N(v)$, we will allow $v$ to be replaced (with varying probabilities) by any subset of its neighbors, in the following way.

Recall that an update of the form $(J,U,t)\in V\times [0,1]\times [0,\tpluss]$ results in replacing the spin at $J$ at time $t$ by some deterministic function $\Upsilon(x,U)$, where $x = \sum_{u\in N(J)} X_t(u)$.
A \emph{generalized} update rule observes updates of the form $(J,A,U,t)$ where $(J,U,t)$ is as before and the additional variable $A\subset [d]$ corresponds to a subset of the neighbors of vertex $J$. The new update rule exposes the spins $\{\sigma_1,\ldots,\sigma_{|A|}\}$ of these neighbors at time $t$, then generates the new spin at $J$ via $\Phi_A(\sigma_1,\ldots,\sigma_{|A|},U)$.

With this generalized update rule, one unfolds the update history of a vertex $\{ \sH_v(t) : 0\leq t \leq \tpluss\}$ as before, with the one difference that an update $(J_i,A_i,U_i,t_i)$ for which $J_i\in\sH_v(t_i)$ now results in $\sH_v(t) = \sH_v(t_{i}) \cup A_i \setminus \{J_i\} $ for all $t\in[t_{i+1},t_i)$.
The functions $\{\Phi_A : A\subset[d]\}$, as well as the probability distribution over the subsets $A\subset[d]$ to be exposed, will be derived from a discrete Fourier expansion of the original rule $\Upsilon$ (see Lemma~\ref{lem-p(k,r)}), so that the new update procedure would, one on hand, couple with the Glauber dynamics, and on the other,
  endow our percolation clusters with a subcritical behavior.

A final ingredient needed for coping with the arbitrary underlying geometry is a modification of the update history, denoted by $\hsH$:
in the modified version, every vertex $v\in V$ receives an (extra) update at time $\tpluss$, and no vertex is removed from the history along the unit interval $(\tpluss-1,\tpluss]$. (For a given update sequence, this operation can only increase any information percolation cluster, and forbidding vertices to die in the first unit interval will be useful in the context of conditioning on other clusters.) We will write $\hcC$, $\hcX$, as well as $\hsH_A(t)$ etc.\ for the corresponding notation w.r.t.\ the modified history $\hsH$.

\smallskip

We end this section with two results on the information percolation clusters --- Lemmas~\ref{lem-Psi} and~\ref{lem-todo} --- which will be central in the proof of Theorem~\ref{mainthm-gen}. The proofs of these lemmas are postponed to~\S\ref{sec:cluster-analysis}.

As explained following the definition of the three cluster types, at the heart of the matter is estimating an exponential moment of the size of the red clusters given $\sH_\green$, the joint history of all green clusters. To this end, we wish to bound the probability that a subset $A$ is a red cluster given $\sH_\green$.
Define
\begin{equation}
  \label{eq-Psi-def}
  \Psi_A = \sup_{\sH_A^-} \P\left(A\in \red \mid \sH_A^-\,,\,\{A\in\red\} \cup \{A \subset V_\blue\}\right) \,,
\end{equation}
noting that, towards estimating the probability of $A\in\red$, the effect of conditioning on $\sH_A^-$
amounts to requiring that $\sH_A$ must not intersect $\sH_A^-$.

\begin{lemma}\label{lem-Psi}
If $\beta < 1/(5d)$ then for any $A\subset V$ and $v\in A$,
\[
\Psi_A \leq 2^{|A|} \E\Big[\one_{\{A\subset \hcC_v\}}e^{\htau_v}\sum_w \one_{\{w\in \hsH_A(\tpluss-\htau_v,\tpluss)\}} \sm_{\tpluss}(w)\Big]
\]
where $\htau_v$ is the time it takes the history of $\hcC_v$
to first coalesce into a single point (if at all), i.e.,
\begin{equation}
  \label{eq-tauv-def}
  \htau_v=\min\Big\{ t\geq 1 : |\hsH_{\hcC_v}(\tpluss - t)| = 1\Big\} \wedge \tpluss\,.
\end{equation}
\end{lemma}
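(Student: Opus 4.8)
The plan is to bound $\Psi_A$ by comparing the event that $A$ is precisely a red cluster with the event that every vertex of $A$ forms its own blue strand, and controlling the resulting ratio. \textbf{Step 1 (reduction to a ratio).} Since $|A|\ge 1$, the events $\{A\in\red\}$ and $\{A\subset V_\blue\}$ are disjoint, so for every realization of $\sH_A^-$ for which the conditional probability in~\eqref{eq-Psi-def} is defined,
\[
\P\big(A\in\red\,\big|\,\sH_A^-,\,\{A\in\red\}\cup\{A\subset V_\blue\}\big)
=\frac{\P(A\in\red\mid\sH_A^-)}{\P(A\in\red\mid\sH_A^-)+\P(A\subset V_\blue\mid\sH_A^-)}
\le\frac{\P(A\in\red\mid\sH_A^-)}{\P(A\subset V_\blue\mid\sH_A^-)}\,,
\]
so $\Psi_A\le\sup_{\sH_A^-}\P(A\in\red\mid\sH_A^-)/\P(A\subset V_\blue\mid\sH_A^-)$. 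We work on the space of update sequences, on which both events force $\sH_A\cap\sH_A^-=\emptyset$, so that the sole effect of conditioning on $\sH_A^-$ is to forbid $\sH_A$ from meeting the frozen set it carries.

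\textbf{Step 2 (the numerator).} Because the modification only enlarges clusters, $\{A\in\red\}\subseteq\{A\subset\hcC_v\}$, and by the grand monotone coupling of the Glauber dynamics from all initial states, $A\in\red$ forces $X^+_\tpluss(u)\neq X^-_\tpluss(u)$ for some $u\in A$; recall also that $\sm_t(w)=\P(X^+_t(w)\neq X^-_t(w))$ by this coupling and spin-flip symmetry. The difficulty is that, conditioned on $\sH_A^-$, the probability of such a disagreement need not be the unconditional magnetization. To digest the conditioning I would reveal the modified history of $\hcC_v$ together with the updates it processes, going back from $\tpluss$ to the first time $\tpluss-\htau_v$ at which it collapses to a single vertex; below that time any disagreement must travel along a single strand, and conditioning on $\sH_A^-$ together with the revealed branching structure, a disagreement-percolation estimate — using the identity for $\sm_t$ together with an exponential moment for the Poissonian number of updates processed while $\hcC_v$ is still branching, which supplies the prefactor $e^{\htau_v}$ and reduces matters to the horizon $\tpluss$ — bounds the conditional disagreement probability by $e^{\htau_v}\sum_w\one_{\{w\in\hsH_A(\tpluss-\htau_v,\tpluss)\}}\sm_\tpluss(w)$ on $\{A\subset\hcC_v\}$. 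Averaging over the revealed data, and noting that the extra conditioning on $\sH_A^-$ only shrinks these cluster functionals (so the unconditional expectation dominates), gives
\[
\P(A\in\red\mid\sH_A^-)\le\E\Big[\one_{\{A\subset\hcC_v\}}e^{\htau_v}\sum_w\one_{\{w\in\hsH_A(\tpluss-\htau_v,\tpluss)\}}\sm_\tpluss(w)\Big]\,.
\]

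\textbf{Step 3 (the denominator).} For the $\sH_A^-$ in question, the events $\{v\in V_\blue\}$ over $v\in A$ depend on disjoint blocks of updates — those at the respective sites, whose histories remain singletons — and are therefore independent; each holds once no revealed update at $v$ is a branching update and the first update at $v$ below $\tpluss-1$ is oblivious. Collecting the death factor $\theta$ and a factor at least $\theta$ for each of the $1+\Po(1)$ updates at $v$ in the last unit interval, this event has probability bounded below by an absolute constant exceeding $\tfrac12$ as soon as $\theta=1-\tanh(\beta d)>\tfrac45$, which holds since $\tanh x\le x$ and $\beta<1/(5d)$. Hence $\P(A\subset V_\blue\mid\sH_A^-)\ge 2^{-|A|}$, and Steps 1--3 combine to give the claimed bound on $\Psi_A$.

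\textbf{Main obstacle.} The crux is Step 2: showing that, uniformly over the frozen external history $\sH_A^-$, conditioning does not inflate the disagreement probability at the sites of $A$ by more than the factor $e^{\htau_v}$, redistributed over the sites visited by $\hsH_A$ during the last $\htau_v$ units of time, and that everything reduces to the magnetization evaluated at the designated time $\tpluss$ rather than at the random shorter horizon $\tpluss-\htau_v$. This is where the monotone-coupling identity for $\sm_t$, the disagreement-percolation bound, and a sharp exponential moment for the number of updates processed during the branching phase must be combined, and it is here that the hypothesis $\beta<1/(5d)$ (through $\theta>\tfrac45$ and the resulting subcritical bookkeeping) is invoked.
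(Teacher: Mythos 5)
Your skeleton matches the paper's strategy — bound the conditional probability by a ratio, show the denominator exceeds $2^{-|A|}$, and bound the numerator by a cluster-functional involving the magnetization — but the crucial intermediate mechanism the paper uses to make the numerator bound rigorous is missing, and two of your key justifications are wrong or unsupported.

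\textbf{The missing intermediate conditioning.} The paper does not bound $\P(A\in\red\mid\sH_A^-)$ directly. Instead, writing $\cM$ for the event that $\sH_A$ avoids the frozen history $\sH_A^-$ (which both $\{A\in\red\}$ and $\{A\subset V_\blue\}$ imply), it introduces the event $\cU$ — that every vertex of $A$ ``undercut'' by $\sH_A^-$ in the last unit interval receives an update in $(s_u,\tpluss]$ — as a necessary subevent of $\cM$, and rewrites the conditional probability as a ratio of probabilities each conditioned only on $\cU$ rather than on $\sH_A^-$. This move is what makes everything downstream work: $\cU$ constrains only the last unit interval $(\tpluss-1,\tpluss]$, so that the disagreement event $\{X^+_T(w)\neq X^-_T(w)\}$ for $T\leq \tpluss-1$, depending as it does only on updates in $(0,T]$, has its clean \emph{unconditional} probability $\sm_T(w)$ given $(T,\sH_A(T,\tpluss),\cU)$. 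Your proposal instead tries to bound $\P(A\in\red\mid\sH_A^-)$ directly, but the conditioning on $\sH_A^-$ reveals information about the update process all the way down to time $0$; conditionally on $\sH_A^-$, the disagreement probability is \emph{not} $\sm_T(w)$, and you have not supplied any argument that reduces it to the unconditional magnetization.

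\textbf{The unproven monotonicity claim.} You assert that ``the extra conditioning on $\sH_A^-$ only shrinks these cluster functionals (so the unconditional expectation dominates).'' This is a nontrivial negative-correlation statement that you neither prove nor reduce to a standard inequality, and it is doing all the work of removing the conditioning. The paper proves no such statement; it removes the $\cU$ conditioning by a concrete stochastic-domination argument — the terms are increasing in the space-time percolation structure, the modified history $\hsH$ adds an update at $\tpluss$, never removes vertices in $(\tpluss-1,\tpluss]$, and satisfies $\Po(\lambda\mid\cdot\geq 1)\preceq \Po(\lambda)+1$ — showing that the law of the histories under $\cU$ with $\sH$ is dominated by the unconditional law with $\hsH$. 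That is a very different (and explicit) argument from the FKG-type inequality you would need.

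\textbf{Where the factor $e^{\htau_v}$ really comes from.} You attribute it to ``an exponential moment for the Poissonian number of updates processed while $\hcC_v$ is still branching.'' In fact it comes from the elementary submultiplicativity of the magnetization, $\sm_T(w)\leq e^{\tpluss-T}\sm_{\tpluss}(w)$, which holds because the probability of no update to $w$ on $(T,\tpluss]$ is $e^{-(\tpluss-T)}$ (see Claim~\ref{clm:MagnetDecay}); with $T=\tpluss-\tau'$ and $\tau'\leq\htau_v$ on $\{A\subset\cC_v\}$, this is exactly $e^{\htau_v}$. The role you assign to the Poisson exponential moments is actually played later, in Lemma~\ref{lem-todo}; it has nothing to do with $e^{\htau_v}$.

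Your Step 3 (the $2^{-|A|}$ denominator) has the same spirit as the paper's but is again missing the $\cU$ scaffolding under which the paper's clean bound $\theta^{|A|}(1-1/e)^{|A\setminus A'|}>2^{-|A|}$ is derived; your estimate via $\theta^{1+\Po(1)}$ is plausibly correct numerically but you have not controlled how it interacts with the frozen set $\sH_A^-$. Overall the structural plan is right, but the proof as written has a genuine gap at its core — the treatment of the conditioning on $\sH_A^-$ — which is precisely the technical content of the lemma.
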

It is worthwhile noting in the context of the parameter $\htau_v$ that, when developing the update history backward in time, $\htau_v$ is not a stopping time, since $\hcC_v$ is affected by any potential coalescence points for $t < \tpluss- \htau_v$; instead, one can determine $\htau_v$ as soon as $\hsH_{\hcC_v}(t) = \emptyset$.
Also observe that $\htau_v=1$ iff $|\cC_v|=1$.
Finally, the coalescence point $w$ at time $t=\tpluss - \htau_v$ (when $t>0$) need not belong to $\hsH_v$ --- e.g., we may have $\hsH_v(t)=\emptyset$ while $w\in\hsH_u$ for some $u\neq v$ whose history intersected that of $v$ at time $t'>t$.

The subcritical nature of the information percolation clusters (prompted by our modified update functions $\Phi_A$) allows one to control exponential moments of the cluster sizes, as in the following lemma.
\begin{lemma}
  \label{lem-todo}
Fix $0<\eta<1$ and $\lambda>0$. There exist constants $\kappa,\gamma>0$ such that the following holds.
For any point $(w_0,t_0)$ in the space-time slab $ V \times (0,\tpluss]$, if $\beta < \kappa / d$ then
  \[ \E\left[ \exp\left( \eta \len(\hcX_{w_0,t_0}) + \lambda |\hsH_{\hcX_{w_0,t_0}}|\right)\right] < \gamma\,,\]
  where
  \[ \len(\hcX) = \sum_{u\in V} \int_0^{\tpluss} \one_{\{(u,t)\in\hcX\}}dt\,.\]
\end{lemma}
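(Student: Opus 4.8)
The plan is to exhibit an exploration process that grows the cluster $\hcX_{w_0,t_0}$ backward in time, and to dominate it by a subcritical branching-type process whose total progeny has exponential tails. Start from the single strand at $(w_0,t_0)$ and process updates in reverse chronological order, exactly as in the definition of $\hsH$, but simultaneously following every vertex that enters any history that will end up connected to $(w_0,t_0)$; equivalently, reveal updates as they are needed and maintain the ``active frontier'' $\hsH_{\hcX}(t)$. Two kinds of events drive the growth: (i) a \emph{branching} update at an active vertex $J$ at time $t$, which replaces $J$ by a subset $A\subset N(J)$ of its neighbors according to the generalized update rule $\Phi_A$, with the distribution over $A$ dictated by the Fourier expansion of Lemma~\ref{lem-p(k,r)}; and (ii) a \emph{coalescence} with another already-active strand (or with an outside history), which only ever decreases the number of active vertices. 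Between updates, each active vertex contributes rate one to the total length $\len$. The two quantities to control are therefore: the integrated number of active vertices over time (which is $\len(\hcX)$), and the total number of distinct vertices ever visited (which is $|\hsH_{\hcX}|$); both are bounded by the total progeny of the associated branching process, since each visited vertex is ``born'' at a branching event and each has a lifetime that is stochastically dominated by an exponential (rate-one) holding time until its next update.

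The key computation is to show that, with the custom update rule, the effective branching is subcritical when $\beta<\kappa/d$. For a given active vertex $J$ with (at most) $d$ neighbors, the probability that a rate-one update occurs and exposes a set $A$ of size $k$ should be of order $(\beta d)^k$ up to constants, so the expected number of offspring is $\sum_{k\ge 1} k\,\P(|A|=k) = O(\beta d)$; this is precisely the point of replacing the all-or-nothing update (which would force the naive bound $\beta\lesssim d^{-d}$) by the Fourier-derived $\Phi_A$, as flagged in \S\ref{sec:exp-framework}. Once the mean offspring number is below a small constant, one obtains a Galton--Watson domination with offspring distribution having a finite exponential moment (the offspring count is at most $d$, so all moments are automatically finite, and the generating function at $e^\lambda$ is finite for every $\lambda$). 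The standard fact that a subcritical Galton--Watson process with bounded offspring has total progeny with exponential tails then gives $\E\exp(c\,|\text{progeny}|)<\infty$ for a small enough $c>0$; feeding in the rate-one holding times (each an independent Exp$(1)$, contributing to $\len$) costs only another constant factor in the exponent, via $\E e^{\eta\,\mathrm{Exp}(1)} = 1/(1-\eta)<\infty$ for $\eta<1$ and an elementary sum-of-i.i.d.-over-a-random-index bound. Choosing $\kappa$ small (depending on $\eta,\lambda$) so that the offspring mean is below the threshold at which the progeny's $\exp(\cdot)$-moment is finite, and tracking the constants, yields the bound with an absolute $\gamma=\gamma(\eta,\lambda)$ that is in particular uniform over the starting point $(w_0,t_0)$, since the domination is by a process that does not see the geometry beyond the degree bound $d$.

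A few technical points need care. First, the modified history $\hsH$ forbids deletions in the last unit interval $(\tpluss-1,\tpluss]$ and adds an update to every vertex at $\tpluss$; this only inflates the cluster by a bounded amount near the top, and the extra contribution to both $\len$ and $|\hsH_\cdot|$ from that single unit slab is dominated by an independent subcritical process of the same type run for time $1$ started from the at-most-$d$ neighbors exposed there, so it is absorbed into $\gamma$. Second, the exploration is not a genuine tree — coalescences create dependencies and can merge strands — but these only help (they reduce the active set), so the Galton--Watson process with \emph{independent} branchings and \emph{no} coalescences is a valid stochastic upper bound for both $\len$ and $|\hsH_{\hcX}|$; making this domination precise (a coupling in which each branching event in the real process is matched to an independent branching in the dominating tree, and coalescences are simply ignored in the dominator) is the one place where one must be careful, but it is routine once the update rule's offspring law is pinned down. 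The main obstacle, then, is really the quantitative input from the Fourier expansion: establishing that $\P(|A|=k)\lesssim (C\beta d)^k$ uniformly, which is exactly the content of Lemma~\ref{lem-p(k,r)} and is what makes the threshold $\kappa/d$ (rather than something like $d^{-d}$) attainable. Granting that lemma, the branching-process domination and the exponential-tail conclusion are standard.
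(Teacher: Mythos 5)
Your plan is essentially the approach the paper takes: explore the cluster by processing strands one at a time, dominate the growth by a subcritical branching-type process whose offspring law comes from Lemma~\ref{lem-p(k,r)}, and then harvest exponential tails. The paper implements this via explicit dominating increments $(\bar Y_m,\bar Z_m)$ rather than citing a general Galton--Watson progeny result, and closes with a geometric-series bound over the number of iterations, but that is essentially the same calculation you are invoking implicitly.

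Two imprecisions worth flagging. First, your claim that each strand's lifetime is ``stochastically dominated by an exponential (rate-one) holding time until its next update'' is not the right bound: a strand (in both time directions, as the exploration moves both forward and backward from $(w_0,t_0)$) persists until \emph{oblivious} updates in both directions, and oblivious updates occur at rate $p_{0,r}\geq 1-\epsilon$, not rate $1$. The paper therefore dominates the strand length by $1+\Gamma(2,1-\epsilon)$ (the $+1$ accounting for the final unit interval where $\hsH$ forbids deletions). This forces the choice $\epsilon=(1-\eta)/4$ in Lemma~\ref{lem-p(k,r)} so that the rate $1-\epsilon$ exceeds $\eta$ and the exponential moment $\E e^{\eta W}$ is finite; your rate-one claim skips this dependence of $\epsilon$ on $\eta$, which is one of the places $\kappa$ ends up depending on $\eta$. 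Second, you only mention branching via a vertex's own updates; the strand also acquires new vertices when a \emph{neighbor}'s update exposes it. The paper bounds this second source of offspring separately (at rate $D_0(3\beta d)^k/d$ per neighbor, hence total rate still $O((\beta d)^k)$); it needs to be accounted for in the offspring mean, though it does not change the order of magnitude. With these corrections, the coupling to a dominating branching process and the geometric-series (equivalently, progeny exponential-moment) conclusion go through exactly as you sketch.

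Finally, your observation that coalescences only help and your treatment of the extra update at $\tpluss$ are both correct in spirit and match the role played by the $\hat V_m^{(k)}$ terms in the paper's dominating process.
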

The above lemma, whose proof follows standard arguments from percolation theory, will be applied for absolute constants $\eta$ and $\lambda$ in the proof of Theorem~\ref{mainthm-gen}
(any $1/2<\eta<1$ and $\lambda>\log 8$ would do), leading to the absolute constant $\kappa$ in the statement of that theorem. The above formulation will be important in the context of Theorem~\ref{mainthm-ann-que}, where one requires $\eta$ that may be very close to $1$ (as a function of $\epsilon$ from the statement of that theorem) and $\lambda$ that depends on the maximum degree.

\section{Cutoff with constant window from a worst starting state}\label{sec:cutoff}
In this section we prove Theorem~\ref{mainthm-gen} via the framework defined in~\S\ref{sec:framework}. As is often the case in proofs of cutoff, the upper bound will require the lion's share of the efforts.
\subsection{Upper bound modulo Lemmas~\ref{lem-Psi} and~\ref{lem-todo}}
Define the coupling distance $\bar{d}_{\tv}(t)$ to be
\[ \bar{d}_\tv(t) = \max_{x_0,y_0} \left\| \P_{x_0}(X_t\in \cdot) - \P_{y_0} (X_t\in\cdot)\right\|_{\tv}\]
(so that $\frac12\bar{d}_\tv(t) \leq d_\tv(t) \leq \bar{d}_\tv(t)$), and observe that
\begin{align*}
 \bar{d}_\tv(t) &\leq  \E\Big[\max_{x_0,y_0}\left\|\P_{x_0}(X_t\in \cdot \mid \sH_\green) - \P_{y_0}(X_t\in\cdot\mid\sH_\green)\right\|_\tv \Big]\nonumber\\
&\leq \sup_{\sH_\green} \max_{x_0,y_0} \left\|\P_{x_0}(X_t(V \setminus V_\green) \in \cdot \mid \sH_\green) - \P_{y_0}(X_t(V \setminus V_\green) \in \cdot \mid \sH_\green)\right\|_\tv \,,
\end{align*}
where the first inequality follows by Jensen's Inequality and the second follows since $X_t(V_\green)$ is independent of the initial condition and so taking a projection onto $V\setminus V_\green$ does not change the total-variation distance between the distributions started at $x_0$ and $y_0$. Thus,
\begin{align}\label{eq-exp-moment-1}
 \bar{d}_\tv(t) &\leq 2 \sup_{\sH_\green} \max_{x_0} \left\|\P_{x_0}(X_t(V \setminus V_\green) \in \cdot \mid \sH_\green) - \nu_{V\setminus V_\green}\right\|_\tv\,,
\end{align}
where $\nu_A$ is the uniform measure on configurations on the sites in $A$. At this point we appeal to the exponential-moment bound of~\cite{MP}, whose short proof is included here for completeness.

\begin{lemma}[\cite{MP}]\label{lem:MP}
Let $\Omega=\{\pm1\}^V$ for a finite set $V$. For each $S\subset V$, let $\phi_S$ be a measure on $\{\pm1\}^{S}$.
Let $\nu$ be the uniform measure on $\Omega$, and let $\mu$ be the measure on $\Omega$ obtained by sampling a subset $S\subset V$ via some measure $\tilde{\mu}$, generating the spins of
 $S$ via $\phi_S$, and finally sampling $V \setminus S$ uniformly. Then
\[ \left\|\mu - \nu\right\|^2_{L^2(\nu)}   \leq
\E\left[2^{\left|S\cap S'\right|}\right]-1\,,\]
where the variables $S$ and $S'$ are i.i.d.\ with law $\tilde{\mu}$.
\end{lemma}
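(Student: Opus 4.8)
The plan is to compute the $L^2(\nu)$ norm directly via the Fourier–Walsh expansion on the discrete cube $\{\pm1\}^V$. Writing $\chi_T(\sigma) = \prod_{v\in T}\sigma(v)$ for $T\subset V$, recall that $\{\chi_T\}$ is an orthonormal basis of $L^2(\nu)$, that $\|\mu-\nu\|_{L^2(\nu)}^2 = \|\mu/\nu - 1\|_{L^2(\nu)}^2 = \sum_{T\neq\emptyset}\widehat{(\mu/\nu)}(T)^2$, and that $\widehat{(\mu/\nu)}(T) = \E_{\sigma\sim\mu}[\chi_T(\sigma)]$. So the first step is simply to identify $\|\mu-\nu\|_{L^2(\nu)}^2 = \sum_{T\neq\emptyset} \big(\E_\mu \chi_T\big)^2$, with the $T=\emptyset$ term ($=1$) excluded.

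The second step is to evaluate $\E_\mu\chi_T$ using the generative description of $\mu$: condition on the sampled set $S$. Given $S$, the spins on $S$ are drawn from $\phi_S$ and the spins on $V\setminus S$ are independent fair coins. Hence $\E[\chi_T \mid S] = 0$ unless $T\subset S$ (any coordinate of $T$ lying outside $S$ contributes an independent mean-zero factor), and when $T\subset S$ we get $\E[\chi_T\mid S] = \widehat{\phi_S}(T)$, the corresponding Fourier coefficient of $\phi_S$. Therefore $\E_\mu \chi_T = \E_{S}\big[\one_{\{T\subset S\}}\,\widehat{\phi_S}(T)\big]$.

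The third step is the key estimate: square this, expand the square as a double expectation over an independent copy $S'$ (with $\phi_{S'}$ the corresponding measure), sum over all nonempty $T$, and interchange sum and expectation (everything is nonnegative after we bound the Fourier coefficients):
\[
\|\mu-\nu\|_{L^2(\nu)}^2 = \sum_{T\neq\emptyset}\E_{S,S'}\big[\one_{\{T\subset S\cap S'\}}\,\widehat{\phi_S}(T)\,\widehat{\phi_{S'}}(T)\big] \leq \E_{S,S'}\Big[\sum_{\emptyset\neq T\subset S\cap S'} |\widehat{\phi_S}(T)|\,|\widehat{\phi_{S'}}(T)|\Big].
\]
Now one applies Cauchy–Schwarz in $T$ together with Parseval for a probability measure: since $\phi_S$ is a probability measure on $\{\pm1\}^S$, $\sum_{R\subset S}\widehat{\phi_S}(R)^2 = \|\phi_S\|_{L^2(\nu_S)}^2 \le \|\phi_S\|_{L^1}\|\phi_S\|_{L^\infty}$ — but cleaner is the elementary bound $|\widehat{\phi_S}(T)| = |\E_{\phi_S}\chi_T| \le 1$, so that $\sum_{\emptyset\neq T\subset S\cap S'} |\widehat{\phi_S}(T)||\widehat{\phi_{S'}}(T)| \le \sum_{\emptyset\neq T\subset S\cap S'} 1 = 2^{|S\cap S'|}-1$. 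Taking the expectation over $(S,S')$ yields $\|\mu-\nu\|_{L^2(\nu)}^2 \le \E\big[2^{|S\cap S'|}\big]-1$, as claimed.

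The only point requiring a little care — and the step I would flag as the main obstacle, though it is mild here — is justifying the interchange of the (finite) sum over $T$ with the expectation over $S,S'$, and making sure the bound $|\widehat{\phi_S}(T)|\le 1$ is deployed at the right moment: if one bounds too early one loses the independence structure that forces $T\subset S\cap S'$, and if one bounds too late one must handle signed cross terms. Organizing the estimate so that the indicator $\one_{\{T\subset S\cap S'\}}$ is kept exact while the Fourier coefficients are bounded by $1$ in absolute value is what makes the final count come out to exactly $2^{|S\cap S'|}-1$. (In the generalization of~\cite{LS4}*{Lemma 4.3} needed near criticality one instead keeps $\widehat{\phi_S}(T)$ unbounded and tracks it, but for the uniform-on-blue-clusters setting of the present paper the crude bound suffices.)
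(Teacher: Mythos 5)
Your Fourier--Walsh proof is correct, and it takes a genuinely different route from the paper's. The paper works directly in configuration space: it writes $\|\mu-\nu\|^2_{L^2(\nu)}+1 = \sum_x \mu^2(x)/\nu(x)$, expands $\mu(x) = \sum_S \tilde\mu(S)\phi_S(x_S)2^{-(n-|S|)}$, and uses the pointwise bound $\sum_x \phi_S(x_S)\phi_{S'}(x_{S'}) \le 2^{n-|S\cup S'|}$ (which follows since the marginals of $\phi_S$ and $\phi_{S'}$ on $S\cap S'$ are sub-probability), arriving at $\sum_{S,S'}2^{|S|+|S'|-|S\cup S'|}\tilde\mu(S)\tilde\mu(S') = \E\big[2^{|S\cap S'|}\big]$. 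You instead identify $\|\mu-\nu\|^2_{L^2(\nu)}=\sum_{T\ne\emptyset}(\E_\mu\chi_T)^2$, compute $\E_\mu\chi_T = \E_S\big[\one_{\{T\subset S\}}\widehat{\phi_S}(T)\big]$ by conditioning on $S$, and then bound $|\widehat{\phi_S}(T)|\le1$ to turn the sum over nonempty $T\subset S\cap S'$ into $2^{|S\cap S'|}-1$. Both are short and elementary. The paper's is marginally more compact, while the Fourier route makes the appearance of $S\cap S'$ structurally transparent (it is precisely the set of $T$ surviving both indicators) and, as you observe, is the natural route to the generalization in~\cite{LS4}*{Lemma 4.3} where one tracks the coefficients $\widehat{\phi_S}(T)$ instead of discarding them. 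One small nit: the Cauchy--Schwarz/Parseval aside you raise is indeed a dead end; the pointwise bound $|\widehat{\phi_S}(T)|\le1$ already closes the argument, and there are no signed cross terms left to worry about once the indicator $\one_{\{T\subset S\cap S'\}}$ is kept exact.
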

\begin{proof}
Write $n=|V|$, and let $x_S$ ($S\subset V$) denote the projection of $x$ onto $S$.  With this notation, by definition of the $L^2(\nu)$ metric (see, e.g.,~\cite{SaloffCoste2}) one has that $\|\mu-\nu\|^2_{L^2(\nu)} + 1 = \int |\mu/\nu - 1|^2 d\nu + 1 $ equals
  \begin{align*}
  \sum_{x\in\Omega} \frac{\mu^2(x)}{\nu(x)} 
  &=2^n \sum_{x\in\Omega}\sum_S \tilde{\mu}(S)\frac{\phi_S(x_S)}{2^{n-|S|}}\sum_{S'} \tilde{\mu}(S')\frac{\phi_{S'}(x_{S'})}{2^{n-|S'|}}
  \end{align*}
by the definition of $\mu$. Since $\sum_x \phi_S(x_S)\phi_{S'}(x_{S'})
\leq 2^{n-|S\cup S'|}$ it then follows that
\begin{equation*}
  \sum_{x\in\Omega}\frac{\mu^2(x)}{\nu(x)} \leq \sum_{S,S'}2^{|S|+|S'|-|S\cup S'|}\tilde\mu(S)\tilde\mu(S')
  =\sum _{S,S'} 2^{\left|S\cap S'\right|} \tilde{\mu}(S)\tilde{\mu}(S')
  \,.\qedhere
  \end{equation*}
\end{proof}
\begin{remark}
In the special case where the distribution $\phi_S$ is a point-mass on all-plus for every $S$, the single inequality in the above proof is an equality (since then $\sum_x\phi_S(x_S)\phi_{S'}(x_{S'})=\#\{x : x_{S\cup S'}\equiv 1\}$) and so
in that situation the $L^2$-distance $\|\mu - \nu\|^2_{L^2(\nu)}$ is precisely equal to $
\E\big[2^{|S\cap S'|}\big]-1$.

For example, consider Glauber dynamics for an $n$-vertex graph at $\beta=0$ (i.e., continuous-time lazy random walk on the hypercube $\{\pm1\}^n$) starting (say) from all-plus, and let $S$ be the set of coordinates which were not updated: here $\P(v\in S)=e^{-t}$ at time $t$, and $\|\P(X_t^+\in\cdot) - \nu\|^2_{L^2(\nu)} =
(1+e^{-2t})^n-1.$
\end{remark}

Applying the above lemma to the right-hand side of~\eqref{eq-exp-moment-1},
while recalling that any two measures $\mu$ and $\nu$ on a finite probability space satisfy $\|\mu-\nu\|_\tv  =\frac12 \|\mu-\nu\|_{L^1(\nu)} \leq \frac12 \|\mu-\nu\|_{L^2(\nu)}$,
we find that
\begin{align}\label{eq-exp-moment-2}
\bar{d}_\tv(\tpluss) \leq \Big( \sup_{\sH_\green} \E
\left[2^{\left|V_\red \cap V_{\red'}\right|} \;\big|\; \sH_\green\right] - 1 \Big)^{1/2} \,,
\end{align}
where $V_\red$ and $V_{\red'}$ are i.i.d.\ copies of the variable $\bigcup\{v \in V : \cC_v \in \red\}$.

Let $\{ Y_{A,A'} : A,A'\subset V\}$ be a family of independent indicators satisfying
\begin{equation}
  \label{eq-YAA'-def}
  \P(Y_{A,A'}=1) = \Psi_{A}\Psi_{A'}\quad\mbox{ for any $A,A'\subset V$}\,.
\end{equation}
We claim that it is possible to couple the conditional distribution of $(V_\red,V_{\red'})$ given $\sH_\green$ to the variables $Y_{A,A'}$ in such a way that
\[ \left|V_\red \cap V_{\red'}\right| \preceq \sum_{A\cap A'\neq \emptyset} |A\cup A'| Y_{A,A'}\,.\]
To do so,
let $\{(A_l,A'_l)\}_{l\geq 1}$ denote all pairs of intersecting subsets ($A,A'\subset V \setminus V_\green$ with $A\cap A'\neq\emptyset $) arbitrarily ordered, associate each pair with a variable $R_l$ initially set to 0, then process these in order:
 \begin{itemize}
   \item If $(A_l,A'_l)$ is such that, for some $j<l$, one has $R_j=1$ and either $A_j\cap A_l\neq\emptyset $ or $A'_j\cap A'_l\neq\emptyset$, then skip this pair (keeping $R_l=0$).
   \item Otherwise, set $R_l$ to the indicator of $\{A_l\in\red,\, A'_l\in\red'\}$.
 \end{itemize}
The claim is that $\P(R_l=1 \mid \cF_{l-1}) \leq \P(Y_{A_l,A'_l}=1)$ for all $l$, where
$\cF_l$ denotes the natural filtration associated to the above process. Indeed, consider some $(A_l,A'_l)$ for which we are about to set $R_l$ to the value of $\one_{\{A_l\in\red,\,A'_l\in\red'\}}$, and take any $A_j$ ($j<l$) such that $A_j\cap A_l\neq\emptyset$ and $\one_{\{\cC_j\in\red,\,\cC'_j\in\red'\}}$ was revealed (and necessarily found to be zero, by definition of the above process). The supremum over $\sH_{A_l}^-$ in the definition of $\Psi_{A_l}$ implies that we need only consider the information $\cF_{l-1}$ offers on $\sH_{A_l}$:
\begin{compactitem}[$\bullet$]
  \item
  If $A_j \cap A_l \neq A_l$ then
  the event $\{A_j\in \red\}$ does not intersect the event $\{A_l \in \red\} \cup \{A_l\subset V_\blue\}$
  (on which we condition in $\Psi_{A_l}$) as it requires $A_j$ to be a full red cluster
(so a strict subset of $A_j$ cannot belong to a separate red cluster, nor can it contain any blue singleton).
\item If $A_j = A_l$, conditioning on $\{A_j\in \red,\, A'_j\in\red'\}^c$ will not increase the probability of $\{A_l\in\red\}$.
\end{compactitem}
Either way, $\P(A_l \in \red \mid \cF_{l-1}) \leq \Psi_{A_l}$.
Similarly, $\P(A'_l\in \red'\mid \cF_{l-1},\,\one_{\{A_l\in \red\}})\leq \Psi_{A_l'}$, and together these inequalities support the desired coupling, since if $v\in V_\red \cap V_{\red'}$ then there is some $l$ for which $v\in A_l \cup A_{l'}$ and $A_l\in\red$, $A_{l'}\in\red'$, in which case every $A_j$ intersecting $A_l$ nontrivially will receive $R_j=0$ (it cannot be red) and the first $j$ with $A_j=A_l$ to receive $R_j=1$ will account for $v$ in $ A_j\cup A'_j$.

Relaxing $|A\cup A'|$ into $|A|+|A'|$ (which will be convenient for factorization), we get \begin{align*}
\sup_{\sH_\green}\E\left[2^{|V_\red \cap V_{\red'}|} \;\big|\; \sH_\green\right] &\leq  \E\left[ 2^{\sum_{A\cap A'\neq\emptyset} (|A|+|A'|) Y_{A,A'}}\right] = \prod_{A\cap A'\neq \emptyset} \E\left[ 2^{(|A|+|A'|)Y_{A,A'}}\right]\,,
\end{align*}
with the equality due to the independence of the $Y_{A,A'}$'s. By the definition of these indicators in~\eqref{eq-YAA'-def}, this last expression is at most
\begin{align*}
& \prod_v \!\prod_{\substack{A,A'\\ v\in A\cap A'}} \!\left(\big(2^{|A|+|A'|}-1\big) \Psi_{A}\Psi_{A'}+1\right)
\leq \exp\bigg[ \sum_v \bigg(\sum_{A \ni v} 2^{|A|} \Psi_{A}\bigg)^2\bigg]\,,
\end{align*}
and so, revisiting~\eqref{eq-exp-moment-2}, we conclude that
\begin{equation}\label{eq-req-exp-bound}
 \bar{d}_\tv(\tpluss)^2 \leq  \bigg( \exp\bigg[ \sum_v \bigg(\sum_{A \ni v} 2^{|A|} \Psi_{A}\bigg)^2\bigg] - 1\bigg) \wedge 1 \leq 2 \sum_v \bigg(\sum_{A \ni v} 2^{|A|} \Psi_{A}\bigg)^2\,,
 \end{equation}
where we used that $e^x-1\leq 2x$ for $x\in[0,1]$. We have thus reduced the upper bound in Theorem~\ref{mainthm-gen} into showing that
the right-hand of~\eqref{eq-req-exp-bound} is at most $\epsilon$
if $\scut= C \log(1/\epsilon)$ for some large enough $C=C(\beta)$.

Plugging the bound on $\Psi_A$ from Lemma~\ref{lem-Psi} shows that the sum in the right-hand of~\eqref{eq-req-exp-bound} is at most
\[\sum_v \Bigg(\sum_{A\ni v} 4^{|A|}\E\bigg[\one_{\{A\subset \hcC_v\}}
e^{\htau_v}\sum_w
\one_{\{w\in \hsH_A(\tpluss-\htau_v,\tpluss)\}} \sm_{\tpluss}(w)\bigg]\Bigg)^2\,.\]
In each of the two sums over $A\ni v$ we can specify the size of $\hcC_v$, and then relax $\{w\in \hsH_A(\tpluss-\htau_v,\tpluss)\}$ into $\{w\in\hsH_{\hcC_v}\}$ (thus permitting all $2^{|\hcC_v|}$ subsets to play the role of $A$); thus, the last display is at most
\begin{align}
   \sum_v \sum_{k,k'} \sum_{w,w'} 8^k &\E\left[\one\left\{|\hcC_v|=k,\,
   w\in\hsH_{\hcC_v}\right\} e^{\htau_v} \sm_{\tpluss}(w)\right]\nonumber\\
   8^{k'}&\E\left[\one\left\{|\hcC_v|=k',\,
   w'\in\hsH_{\hcC_v}\right\} e^{\htau_v} \sm_{\tpluss}(w')\right]\,.\label{eq-sum-k-k'}
 \end{align}
 Denoting the indicators above by $\Xi(v,w,k)$ and $\Xi(v,w',k')$ respectively, and using the fact that
 \[ \sum_{w,w'} \sm_t(w) \sm_t(w') \leq \frac12 \sum_{w,w'}\left(\sm_t(w)^2 + \sm_{t}(w')^2\right) = \sum_{w,w'} \sm_t(w)^2\]
 in~\eqref{eq-sum-k-k'} culminates in the following bound on sum in the right-hand of~\eqref{eq-req-exp-bound}:
\begin{align}
  \sum_v \bigg(\sum_{A \ni v} 2^{|A|} \Psi_{A}\bigg)^2\leq &\sum_w \sm_{\tpluss}(w)^2 \sum_k \sum_v \E\left[8^k \Xi(v,w,k) e^{\htau_v} \right] \sum_{k'} \E\bigg[e^{\htau_v}\sum_{w'}8^{k'} \Xi(v,w',k')  \bigg]\,.\label{eq-sum-k-k'-2}
 \end{align}
For the summation over $k'$ in~\eqref{eq-sum-k-k'-2}, we combine the facts that $\htau_v \leq \frac12 \len(\hsH_{\hcC_v}(\tpluss-\htau_v,\tpluss))+1 \leq \frac12 \len(\hsH_{\hcC_v})+1$ (either $|\hcC_v|=1$ and then $\htau_v=1$, or $|\hcC_v|\geq 2$ whence at least two strands survive for a period of $\htau_v$), that at most $|\hsH_{\hcC_v}|$ choices for $w'$ support $\Xi(v,w',k)=1$ and that $\sum_{k'}\Xi(v,w',k) \leq 1$, to get
\begin{align}
   \sum_{k'} \E\bigg[e^{\htau_v}\sum_{w'}8^{k'} \Xi(v,w',k')  \bigg]
\leq \E  \bigg[|\hsH_{\hcC_v}|\, 8^{|\hsH_{\hcC_v}|}e^{\frac12 \len\left(\hsH_{\hcC_v}\right)+1}\bigg]\leq \gamma_1\label{eq-sum-k-k'-last-subsum}
\end{align}
for some absolute constant $\gamma_1>0$, where the last inequality applied Lemma~\ref{lem-todo}.

Next, to treat the summation over $k$ in~\eqref{eq-sum-k-k'-2}, recall that
$\hcX_{w,r}$ for $(w,r)\in V\times[0,\tpluss]$ is the information percolation cluster containing the point $(w,r)$ in the space-time slab (i.e., the cluster is exposed from time $r$ instead of time $\tpluss$ and the process of developing it moves both forward and backward in time). Further write $\hcX^+_{w,r} = \lim_{t\to r^+} \hcX_{w,t}$ and $\hcX^-_{w,r}=\lim_{t\to r^-}\hcX_{w,t}$.

We claim that whenever $\Xi(v,w,k)=1$, necessarily $v\in \hcX^-_{w,r}$ for some $r\in \Pi_w$, where $\Pi_w$ records the update times for the vertex $w$ (always including $\tpluss$, by definition of $\hsH$).
Indeed, if $w\in\hsH(\tpluss-\htau_v,\tpluss)$ then by definition we can find some $q\in(\tpluss-\htau_v,\tpluss)$ such that $(w,q)$ shares the same information percolation cluster as $(v,\tpluss)$. Furthermore, if $r$ is the earliest update of $w$ after time $q$ then the cluster of $(w,t)$ for any $t\in(q,r)$ will contain $(w,q)$, and thus $(v,\tpluss)$ as-well. (It is for this reason that we addressed $\hcX^-_{w,r}$, in case the update at $(w,r)$ should cut its information percolation cluster from $(w,q)$.)
For that $r$, we further have $\tpluss-r\leq \htau_v \leq \frac12 \len(\hcX_{w,r}^-)+1$, and so
\begin{align*}
   \sum_k \sum_v\E\left[8^k \Xi(v,w,k) e^{\htau_v} \right] &\leq
  \E\left[\sum_{r\in \Pi_w} |\hsH_{\hcX_{w,r}^-}|\,8^{|\hsH_{\hcX_{w,r}^-}|} e^{\frac12\len\big(\hcX_{w,r}^-\big)}\one_{\big\{\frac12\len(\hcX_{w,r}^-)
  \geq\tpluss-r-1\big\}} \right]\\
    &\leq \E\left[\sum_{r\in \Pi_w} |\hsH_{\hcX_{w,r}^-}|\, 8^{|\hsH_{\hcX_{w,r}^-}|} e^{\frac34\len\big(\hcX_{w,r}^-\big) - \frac12(\tpluss-r-1)} \right]\,,
  \end{align*}
which, recalling that $\Pi_w$ is the union of $\{\tpluss\}$ and a rate-1 Poisson process, is at most
  \begin{align*}
  &
  \E\left[|\hsH_{\hcC_w}|\, 8^{|\hsH_{\hcC_w}|}\,e^{\frac34\len(\hsH_{\hcC_w})+\frac12}\right] +
  \int_{0}^{\tpluss}
  \E\left[\sum_{r\in \Pi_w} |\hsH_{\hcX_{w,r}^-}| \,8^{|\hsH_{\hcX_{w,r}^-}|}\, e^{\frac34\len(\hcX_{w,r}^-) - \frac12(\tpluss-r-1)} \;\bigg|\; r\in \Pi_w\right] dr \\
  \qquad&\leq
  \sqrt{e} \gamma_2 \left[ 1 + \int_{0}^{\tpluss}e^{- \frac12(\tpluss-r)} dr\right] \leq 5 \gamma_2
\end{align*}
for some absolute constant $\gamma_2>0$, using Lemma~\ref{lem-todo} (with $\gamma_2$ from that lemma) for the first inequality.

Substituting the last two displays together with~\eqref{eq-sum-k-k'-last-subsum} in~\eqref{eq-sum-k-k'-2}, while recalling~\eqref{eq-req-exp-bound}, finally gives
\begin{align}
 \bar{d}_\tv(\tpluss)^2\leq 10\gamma_1\gamma_2 \sum_w \sm_{\tpluss}(w)^2 \,.\label{eq-req-exp-bound2}
\end{align}
The proof will be concluded with the help of the next simple claim that establishes a submultiplicative bound for the second moment of the magnetization.
\begin{claim}\label{clm:MagnetDecay}
For any $t,s>0$ we have
\[
e^{-2s}\leq
\frac{\sum_w \sm_{t+s}(w)^2}{\sum_w\sm_t(w)^2} \leq e^{-2(1-\beta d) s }
\,.\]
\end{claim}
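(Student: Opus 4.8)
The plan is to reduce both inequalities to a single differential inequality for the scalar $F(t):=\sum_w \sm_t(w)^2$. Precisely, it suffices to prove that $-2\le (\log F)'(t)\le -2(1-\beta d)$ for every $t>0$: integrating this over $[t,t+s]$ (equivalently, a Gr\"onwall argument, noting $F>0$) then yields exactly $e^{-2s}\le F(t+s)/F(t)\le e^{-2(1-\beta d)s}$.

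First I would record the evolution of the magnetization. On a finite state space one has $\tfrac{d}{dt}\E[g(X_t)]=\E[(\mathscr{L}g)(X_t)]$ for all $g$; taking $g_v(\sigma)=\sigma(v)$, writing $S_v(\sigma):=\sum_{w\sim v}\sigma(w)$, and using $(\mathscr{L}g_v)(\sigma)=-2\,c(v,\sigma)\,\sigma(v)$ with the heat-bath flip rate from \eqref{eq-Glauber-gen}, a direct computation gives the pointwise identity $(\mathscr{L}g_v)(\sigma)=-\sigma(v)+\tanh(\beta S_v(\sigma))$. Hence $\tfrac{d}{dt}\sm_t(v)=-\sm_t(v)+\E[\tanh(\beta S_v(X_t^+))]$, so that
\[ F'(t)=-2F(t)+2\,\Theta(t),\qquad \Theta(t):=\sum_v \sm_t(v)\,\E\!\left[\tanh\!\big(\beta S_v(X_t^+)\big)\right]. \]
Everything then comes down to sandwiching $\Theta(t)$ between $0$ and $\beta d\,F(t)$, equivalently to showing $-2\le F'(t)/F(t)=-2+2\Theta(t)/F(t)\le -2(1-\beta d)$.

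To control $\Theta$ I would invoke two standard features of the ferromagnetic dynamics: the monotone grand coupling, under which $X_t^+\succeq X_t^-$ coordinatewise, and the spin-flip symmetry $X_t^-\stackrel{d}{=}-X_t^+$. The coupling gives $\sm_t(v)=\E X_t^+(v)\ge\E X_t^-(v)=-\sm_t(v)$, so $\sm_t(v)\ge0$ for all $v,t$; applying it to the coordinatewise nondecreasing function $S_v$ and using that $\tanh$ is odd, nondecreasing and $1$-Lipschitz gives both $\E[\tanh(\beta S_v(X_t^+))]\ge \E[\tanh(\beta S_v(X_t^-))]=-\E[\tanh(\beta S_v(X_t^+))]$ --- so this expectation is $\ge0$ --- and $2\,\E[\tanh(\beta S_v(X_t^+))]=\E[\tanh(\beta S_v(X_t^+))-\tanh(\beta S_v(X_t^-))]\le \beta\,\E[S_v(X_t^+)-S_v(X_t^-)]=2\beta\sum_{w\sim v}\sm_t(w)$. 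Multiplying by $\sm_t(v)\ge0$ yields, for each $v$, the bound $0\le \sm_t(v)\,\E[\tanh(\beta S_v(X_t^+))]\le \beta\,\sm_t(v)\sum_{w\sim v}\sm_t(w)$. Summing the lower bound over $v$ gives $\Theta(t)\ge0$, hence $F'\ge-2F$, the claimed lower bound; summing the upper bound and using $\sm_t(v)\sm_t(w)\le\tfrac12(\sm_t(v)^2+\sm_t(w)^2)$ over adjacent ordered pairs together with $\deg(v)\le d$ gives $\Theta(t)\le \beta\sum_v\deg(v)\,\sm_t(v)^2\le \beta d\,F(t)$, hence $F'\le-2(1-\beta d)F$, the upper bound. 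Positivity of $F$, needed to divide, holds for every $\beta\ge0$: from $\dot{\sm}_t(v)\ge-\sm_t(v)$ and $\sm_0(v)=1$ one gets $\sm_t(v)\ge e^{-t}>0$, so $F(t)\ge ne^{-2t}>0$.

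The one genuinely substantive step is the control of $\Theta(t)$. A naive Jensen-type estimate for $\E[\tanh(\beta S_v(X_t^+))]$ fails, since $\tanh$ is neither convex nor concave on $\R$ and $S_v$ need not be samplewise nonnegative; comparing $X^+$ with $X^-$ in the monotone coupling is what simultaneously produces (i) positivity of the magnetizations, (ii) nonnegativity of $\E[\tanh(\beta S_v(X_t^+))]$, and (iii) its linear-in-$\beta$ upper bound $\beta\sum_{w\sim v}\sm_t(w)$ via $1$-Lipschitzness. The remaining ingredients --- differentiating under the expectation (immediate on a finite chain), the adjacency bookkeeping, and passing from the differential inequalities to the stated ratio --- are routine. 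For concreteness the computation is written for heat-bath dynamics, the case for which the oblivious-update probability $\theta$ of \eqref{eq-def-theta} is calibrated.
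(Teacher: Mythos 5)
Your proof is correct, and for the upper bound it coincides step for step with the paper's: the generator identity $\tfrac{d}{dt}\sm_t(v)=-\sm_t(v)+\E[\tanh(\beta S_v(X_t^+))]$, the $1$-Lipschitz bound on $\tanh$ applied to $X_t^+\succeq X_t^-$ under the monotone coupling (together with spin-flip symmetry to rewrite the expectation as a difference), and the AM--GM plus maximum-degree bookkeeping are the same ingredients in the same order. The one (minor) difference is the lower bound, which the paper obtains directly from $\sm_{t+s}(w)\geq e^{-s}\sm_t(w)$ (no update to $w$ during $(t,t+s]$ occurs with probability $e^{-s}$) rather than from $\Theta(t)\geq 0$ and the ODE $F'=-2F+2\Theta$; both are elementary and correct, and yours has the slight aesthetic advantage of routing both directions through the same generator computation.
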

\begin{proof}
The lower bound follows from the straightforward fact that $\sm_{t+s}(w) \geq e^{-s} \sm_t(w)$ for any $s,t>0$ and $w$, since the probability of observing no updates to $w$ along the interval $(t,t+s)$ (thus maintaining the magnetization without a change) is $e^{-s}$. It therefore remains to prove the upper bound.

By expanding the probability of $1$ in an update, which is $\frac12 + \frac12\tanh(\beta \sigma)$ given a sum of neighbors of $\sigma$, and using the fact that $\frac{d}{dx}\tanh(x) \leq 1$ for any $x\in\R$, we have that upon updating $v$
\begin{align*} \P(X_t^+(v)=1)- \P(X_t^-(v)=1) &= \frac12\E\left[\tanh(\beta \sum_{w\sim v} X_t^+(w))-\tanh(\beta\sum_{w\sim v}X_t^-(w))\right] \\
& \leq \frac\beta2 \E\left[\sum_{w\sim v} X_t^+(w)-X_t^-(w)\right] = \beta\sum_{w\sim v} \sm_t(w)\,,
\end{align*}
and so $\frac{d}{dt}\sm_t(v) \leq \beta \sum_{w\sim v}\sm_t(w) - \sm_t(v)$. Hence,
\begin{align*}
  \frac{d}{dt} \sum_v \sm_t(v)^2 &= 2 \sum_v \sm_t(v) \frac{d}{dt}\sm_t(v) \leq -2\sum_v \sm_t(v)^2 + 2\beta \sum_v \sm_t(v) \sum_{w\sim v}\sm_t(w)\,,
  \end{align*}
and using $\sm_t(v)\sm_t(w) \leq \frac12(\sm_t(v)^2 + \sm_t(w)^2)$ it follows that
\begin{align*}
  \frac{d}{dt} \sum_v \sm_t(v)^2 &\leq -2\left(1 - \beta d\right) \sum_v \sm_t(v)^2\,,  \end{align*}
which implies the desired upper bound.
\end{proof}
Recalling that $\tpluss = t_\sm +\scut$, we apply the above claim
for $t=t_\sm$ (at which point $\sum_{w}\sm_{t_\sm}(w)^2=1$ by definition) and $s=\scut$ to find that $\sum_w \sm_{\tpluss}(w)^2 \leq \exp(-2(1-\beta d ) \scut) \leq \exp(-\scut)$, with the last inequality via $\beta d \leq \frac12$.
By~\eqref{eq-req-exp-bound2} (keeping in mind that $\gamma_1$ and $\gamma_2$ are absolute constants) this implies that $ \bar{d}_\tv(\tpluss) \leq \epsilon$ if we take $\scut \geq C \log(1/\epsilon)$
for some absolute constant $C>0$, as required.
\qed

\subsection{Lower Bound}

We now estimate the correlation of two vertices at an arbitrary time.
\begin{claim}\label{clm:Correlation}
There exist absolute constants $\kappa,\gamma>0$ such that, for any initial state, if $\beta<\kappa/d$ then
\[
\sum_u \Cov(X_{t}(u), X_t(v)) \leq \gamma \quad\mbox{ for any $t>0$ and $v\in V$}\,.
\]
\end{claim}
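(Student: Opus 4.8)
The plan is to bound the sum $\sum_u \Cov(X_t(u),X_t(v))$ by exploiting the information percolation representation of correlations: two spins $X_t(u)$ and $X_t(v)$ can only be correlated if their update histories $\sH_u$ and $\sH_v$ intersect, i.e.\ if $u$ and $v$ lie in the same information percolation cluster $\cC$. More precisely, conditioning on the family of histories and on the collection of all updates, the spins of distinct clusters are mutually independent (the green clusters have complicated but independent laws, the blue ones are i.i.d.\ uniform, and the red ones depend only on the initial state through disjoint sets of sites). Hence, writing $\cC_v$ for the cluster containing $v$, one has the crude bound $|\Cov(X_t(u),X_t(v))| \le \P(u \in \cC_v)$, and therefore
\[
\sum_u \bigl|\Cov(X_t(u),X_t(v))\bigr| \;\le\; \sum_u \P\bigl(u\in\cC_v\bigr) \;=\; \E\,|\cC_v|\,.
\]
So the statement reduces to showing that $\E|\cC_v|$ is bounded by an absolute constant $\gamma$, uniformly in $t$ and $v$, provided $\beta<\kappa/d$.

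The second step is to deduce this uniform bound on $\E|\cC_v|$ from Lemma~\ref{lem-todo}. Apply that lemma with the target time $\tpluss$ replaced by the arbitrary time $t$ in the statement, at the point $(w_0,t_0)=(v,t)$; taking, say, $\eta=1/2$ and $\lambda=\log 8$ (or indeed any fixed admissible pair), it yields an absolute constant $\kappa>0$ such that for $\beta<\kappa/d$,
\[
\E\bigl[\exp\bigl(\tfrac12\len(\hcX_{v,t}) + (\log 8)\,|\hsH_{\hcX_{v,t}}|\bigr)\bigr] < \gamma_0
\]
for an absolute constant $\gamma_0$. Since $|\cC_v| \le |\hcX_{v,t}\cap(V\times\{t\})| \le |\hsH_{\hcX_{v,t}}|$, and $|\hsH_{\hcX_{v,t}}| \le \exp\bigl((\log 8)|\hsH_{\hcX_{v,t}}|\bigr)$, the bound above immediately gives $\E|\cC_v| \le \gamma_0 =: \gamma$, an absolute constant, independent of $t$ and $v$. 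Combined with the first step this proves the claim; note the constant $\kappa$ here is exactly the one from Lemma~\ref{lem-todo} and is consistent with (can be taken no larger than) the $\kappa$ of Theorem~\ref{mainthm-gen}.

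The one point that requires care — and which I expect to be the main obstacle — is justifying rigorously that spins in distinct information percolation clusters are \emph{uncorrelated} given the appropriate conditioning, so that $|\Cov(X_t(u),X_t(v))|$ is genuinely controlled by $\P(u\in\cC_v)$. The subtle issue is that the decomposition into clusters is itself a random object determined by the update sequence, so one must condition on the entire cluster structure (equivalently, on all the histories $\{\hsH_w : w\in V\}$ together with the relevant update data), observe that under this conditioning $X_t(u)$ and $X_t(v)$ become deterministic functions of disjoint collections of independent inputs whenever $u\notin\cC_v$, and only then remove the conditioning. A clean way to phrase this: condition on $\sH_{\cC_v}^-$ (the histories of all vertices not in $\cC_v$) and on the cluster $\cC_v$; then $X_t(v)$ depends only on data internal to $\cC_v$ while $X_t(u)$ for $u\notin\cC_v$ depends only on the complementary data, so the conditional covariance vanishes, and one concludes by the law of total covariance together with $\sum_u \P(u\in\cC_v) = \E|\cC_v|$. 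Everything else is a direct invocation of Lemma~\ref{lem-todo}.
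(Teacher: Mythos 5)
Your overall architecture is exactly the paper's: reduce the claim to $\sum_u \P(u\in\cC_v)=\E|\cC_v|$ and then invoke Lemma~\ref{lem-todo} (via $|\cC_v|\le|\hsH_{\hcC_v}|\le 8^{|\hsH_{\hcC_v}|}$) to get an absolute bound; that part is fine, including the remark that the lemma applies with $\tpluss$ replaced by the arbitrary time $t$. The one step you flag as delicate is, however, where your sketch has a real gap, and the paper resolves it differently. The law of total covariance reads $\Cov(X,Y)=\E[\Cov(X,Y\mid\cF)]+\Cov(\E[X\mid\cF],\E[Y\mid\cF])$; even granting that the conditional covariance vanishes on $\{u\notin\cC_v\}$, you never address the second term, and the conditional expectations $\E[X_t(u)\mid\cF]$ and $\E[X_t(v)\mid\cF]$ are functions of the (jointly distributed) histories, so their covariance does not vanish for free. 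Moreover, conditioning on the cluster $\cC_v$ itself biases the update sequence both inside and outside the cluster, since $\{\cC_v=C\}$ is a joint constraint, so "disjoint collections of independent inputs" is not immediate under that conditioning.

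The paper avoids all of this with an explicit coupling rather than conditioning: it introduces two \emph{independent} copies $X_t'$ and $X_t''$ of the dynamics and couples them with $X_t$, by exploring histories, so that on the event $\{u\notin\cC_v\}$ one has $X_t(u)=X_t'(u)$ and $X_t(v)=X_t''(v)$. Then
\[
\E[X_t(u)X_t(v)]=\E\big[X_t'(u)X_t''(v)+\big(X_t(u)X_t(v)-X_t'(u)X_t''(v)\big)\one_{\{u\in\cC_v\}}\big]\le \E[X_t'(u)]\,\E[X_t''(v)]+2\P(u\in\cC_v),
\]
giving $\Cov(X_t(u),X_t(v))\le 2\P(u\in\cC_v)\le 2\P(u\in\hcC_v)$ with no conditioning at all. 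If you want to salvage your route, you would effectively need to reproduce this coupling (or condition on the exact histories and separately control the contribution of the event $\{u\in\cC_v\}$, which amounts to the same decomposition). I recommend replacing the law-of-total-covariance paragraph with the two-independent-copies coupling; the rest of your argument then goes through verbatim.
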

\begin{proof}
Let $X_{t}'$ and $X_{t}''$ be two independent copies of the dynamics.  By exploring the histories of the support we may couple $X_t$ with $X_{t}'$ and $X_{t}''$ so that, on the event $\{ u\notin \cC_v\}$,
the history of $u$ in $X_t$ is equal to the history of $u$ in $X_{t}'$ and the history of $v$ in $X_t$ is equal to the history of $v$ in $X_{t}''$.  Hence,
\begin{align*}
\E\left[X_t(u) X_t(v)\right] &= \E\left[ X_t'(u) X_t''(v) + \big(X_t(u) X_t(v) - X_t'(u) X_t''(v)\big)\one_{\{u\in \cC_v\}}\right]\\
&\leq \E\left[ X_t'(u)\right]\E\left[ X_t''(v)\right] + 2\P(u\in \cC_v)\,.
\end{align*}
It follows that $\Cov(X_t(u), X_t(v)) \leq 2\P(u\in \cC_v)\leq 2\P(u\in \hcC_v)$, and so
\[
\sum_u \Cov(X_{t}(u), X_t(v)) \leq 2 \E |\hcC_v| \leq \gamma\,,
\]
with the final equality thanks to Lemma~\ref{lem-todo}.
\end{proof}

We are now ready to prove the lower bound on the mixing time in Theorem~\ref{mainthm-gen}.
To this end, we use the magnetization to generate a distinguishing statistic
at time $\tminuss = \tcut-\scut$, given by
\[ f(\sigma) = \sum_{v\in V} \sm_{\tminuss}(v) \sigma(v)\,.\]
  Putting $Y = f\big(X^+_{\tminuss}\big)$ for the dynamics started from all-plus and $Y' = f(\sigma)$ with $\sigma$ drawn from the Ising distribution $\pi$, we combine
Claim~\ref{clm:MagnetDecay} with the fact that $\sum_v \sm_{\tcut}(v)^2 = 1$ (by definition) to get
\begin{align}
  \label{eq-E[Y]}
\E Y = \sum_{v} \sm_{\tminuss}(v)^2 \geq e^{2(1-\beta d) \scut} \sum_{v}\sm_{\tcut}(v)^2 = e^{2(1-\beta d)\scut} \geq e^{\scut}
\end{align}
(the last inequality using $\beta d \leq \frac12$),
whereas $\E Y' = 0$ (as $\E[\sigma(v)]=0$ for any $v$).

For the variance estimate, observe that
\begin{align*}
\var\left(Y\right) &= \sum_{u,v} \sm_{\tminuss}(u)\sm_{\tminuss}(v) \Cov\left(X^+_{\tminuss}(u),X^+_{\tminuss}(v)\right) \leq \frac12\sum_{u,v}\left(\sm_{\tminuss}(u)^2 +\sm_{\tminuss}(v)^2\right)\Cov\left(X^+_{\tminuss}(u),X^+_{\tminuss}(v)\right) \\
& \leq \gamma \sum_{v}\sm_{\tminuss}(v)^2 = \gamma \, \E Y\,,
\end{align*}
using Claim~\ref{clm:Correlation} for the inequality in the last line.
Furthermore, since the law of $X_t$ converges as $t\to\infty$ to that of $\sigma$, for any $v\in V$ we have
\[ \sum_u \Cov(\sigma(u),\sigma(v)) = \lim_{t\to\infty} \sum_u \Cov\left(X_t(u),X_t(v)\right) \leq \gamma\,, \]
and so the same calculation in the above estimate for $\var(Y)$ shows that
\[
\var\left(Y'\right) \leq \gamma\, \E Y\,.
\]

Altogether, by Chebyshev's inequality,
\[
\P\left(Y \geq \tfrac23 \E Y\right) \geq 1 -  9\gamma /\E Y \,,
\]
whereas
\[
\P\left( Y' \leq \tfrac13 \E Y\right) \geq 1-  9\gamma /\E Y\,.
\]
Recalling~\eqref{eq-E[Y]}, the expression $9\gamma/\E Y$ can be made less than $\epsilon/2$ by choosing $\scut \geq C \log(1/\epsilon)$ for some absolute constant $C>0$, thus concluding the proof of the lower bound.
\qed

\section{Analysis of percolation clusters}\label{sec:cluster-analysis}

\subsection{Red clusters: Proof of Lemma~\ref{lem-Psi}}
As we condition on the fact that either $A\in \red$ or $A\subset V_\blue$, as well as on the collective history of every $v\notin A$,
the history of the vertices of $A$ must avoid $\sH_A^-$ --- an event that we mark as $\cM$ --- and then give rise to blue clusters or a single red one
(we are interested in bounding the probability of the latter).
To analyze the probability of $\cM$, for each $u\in A$ we look at the latest time at which $\sH_A^-$ contains it ($u$ is ``undercut'' by $\sH_{A^-}$), that is,
\[ s_u = s_u(\sH_A^-) = \max\left\{ s \;:\; u\in\sH_{V\setminus A}(s)\right\}\,,\]
and focus our attention on the vertices that are undercut in the unit interval $(\tpluss-1,\tpluss]$ (which is the first unit interval to be exposed when developing $\sH_A$), writing
\[ A' = \left\{ u \in A \;:\; s_u > \tpluss-1\right\}\,,\]
and we denote by $\cU$ the event that every $u\in A'$ received an update in the interval $(s_u,\tpluss]$, which is of course a necessary condition for $\cM$ (so as to avoid the scenario where $u\in \sH_u(s_u)$ and intersects $\sH_A^-$ at that point).
With this in mind, for any $A\subset V$ and $\sH_A^-$ we have
\[\P\left(A\in\red\mid\sH_A^-\,,\{A\in\red\}\cup\{A\subset V_\blue\}\right)=\frac{\P(A\in\red\,,\,\cM\mid \cU)}{\P(\{A\in\red\}\cup\{A\subset V_\blue\}\,,\, \cM\mid \cU)}\,.\]
The numerator is at most $\P(A\in\red\mid\cU)$, while the denominator can be bounded from below by the probability that, in the space conditioned on $\cU$, the last update to each $u\in A$
occurs in the interval $(s_u \vee \tpluss-1,\tpluss]$ and it is oblivious (implying that its history amounts to the singleton $\{u\}$ dying out prior to being possibly undercut by $\sH_A^-$, and so $u\in V_\blue$).
Hence,
\[ \P(\{A\in\red\}\cup\{A\subset V_\blue\}\,,\, \cM\mid \cU) \geq \theta^{|A|}(1-1/e)^{|A\setminus A'|} > 2^{-|A|}\,,\]
where the term $\theta^{|A|}$ accounts for the probability that the latest most update is oblivious, the factor $(1-1/e)$ requires an update for vertices of $A\setminus A'$ (whose update in the last unit interval was not guaranteed by $\cU$), and the last inequality used that $\theta(1-1/e) \geq  (1-\tanh(\frac15))(1-1/e) > \frac12$ by our assumption on $\beta$ and the definition of $\theta$ in~\eqref{eq-def-theta}.
Overall,  we find that
\begin{align}
  \label{eq-psi-bound-given-U}
\P\left(A\in\red\mid\sH_A^-\,,\{A\in\red\}\cup\{A\subset V_\blue\}\right) \leq 2^{|A|} \P(A\in\red \mid \cU)\,.
\end{align}
Recall that in order for $A$ to form a complete red cluster, the update histories $\{\sH_u : u\in A\}$ must belong to the same connected component of the space-time slab, and moreover, the configuration of $A$ at time $\tpluss$ must be a nontrivial function of the initial configuration. Thus, either the histories $\{\sH_u : u\in A\} $ coalesce to a single point $w$ at some time $1\leq T<\tpluss$ --- and then the spin there must depend nontrivially on the initial state, i.e., $X^+_T(w) \neq X^-_T(w)$ --- or the histories for all $u\in A$ all join into one cluster along $(0,\tpluss]$ and at least one of these survives to time 0.
(The same would be true if we did not restrict the coalescence time to be at least 1, yet in this way the conditioning on $\cU$, which only pertains to updates along the interval $(\tpluss-1,\tpluss]$, does not cause any complications.) For the latter, we denote by $\cJ(a,b)$ the event that the histories join in the interval $(a,b)$, and for the former we let
\[\tau' = \min\left\{ t\geq 1 : |\sH_{A}(\tpluss - t)| = 1\right\} \wedge \tpluss\quad~,~\quad T = \tpluss-\tau'\,,\]
and note that the variable $\tau'$ is a stopping time w.r.t.\ the natural filtration associated with exposing the update histories backward from time $\tpluss$; indeed, in contrast to a definition of $\tau_v$ analogous to~\eqref{eq-tauv-def} --- asking for $\{ \sH_u : u \in \cC_v\}$ to coalesce to a single point --- here one only requires this for $\{\sH_u : u\in A\}$ (whereas $\cC_v$ may be affected by the histories along $(0,T]$ as these may admit additional vertices to it).
With this notation, we deduce from the above discussion that
\[
\P(A\in\red \mid \cU) \leq \P\bigg( \bigcup_w \left\{\cJ(T,\tpluss)\,,\, w\in \sH_A(T)\,,\, X^+_T(w)\neq X^-_T(w)\right\}\;\Big|\;\cU\bigg)\,.
\]
(If $T=0$ and $A\in\red$ then $\sH_A(0)\neq\emptyset$, whence $X^+_0(w)\neq X^-_0(w)$ trivially holds for any $w\in\sH_A(0)$.)
By conditioning on $T$ as well as on $\sH_A(T,\tpluss)$, the first two events on the right-hand side become measurable, while the event $X^+_T(w)\neq X^-_T(w)$ only depends on the histories along $(0,T]$ and satisfies
\[ \P\left(X^+_T(w)\neq X^-_T(w) \mid T\,,\, \sH_A(T,\tpluss)\right) = \sm_T(w) \leq e^{\tpluss-T} \sm_{\tpluss}(w) \,,\]
where the final inequality used the fact, mentioned in the proof of Claim~\ref{clm:MagnetDecay}, that $\sm_{t+s}(w) \geq e^{-s} \sm_t(w)$ for any $s,t>0$ and $w$, as the probability of no updates to $w$ along the interval $(t,t+s)$ (maintaining the magnetization without a change) is $e^{-s}$. Now, averaging over this conditional space yields
\begin{align*}
\P(A\in\red \mid \cU) &\leq  \E\bigg[ \sum_w \one_{\{\cJ(T,\tpluss)\}} \one_{\{w\in \sH_A(T)\}} e^{\tpluss - T} \sm_{\tpluss}(w) \;\Big|\;\cU\bigg]\\
&\leq  \E\bigg[ \sum_w \one_{\{A \subset \cC_v\}}  \one_{\{w\in \sH_A(\tpluss-\tau',\tpluss)\}} e^{\tau'} \sm_{\tpluss}(w) \;\Big|\;\cU\bigg]\,,
\end{align*}
where we increased the event $\cJ(T,\tpluss)$ (the joining of $\sH_A$ along $(T,\tpluss]$) into $A \subset \cC_v$ (valid for any $v\in A$) as well as the event $\{w\in\sH_A(T)\}$ into $\{w\in\sH_A(T,\tpluss)\}$,
and finally plugged in that $T=\tpluss - \tau'$.
Since by definition $\tau' \leq \tau_v = \min\left\{ t\geq 1 : |\sH_{\cC_v}(\tpluss - t)| = 1\right\} \wedge \tpluss$ on the event $A\subset \cC_v$, we conclude that
\begin{align}\label{eq-psi-bound-no-hat}
\P(A\in\red \mid \cU) \leq \E\bigg[ \sum_w \one_{\{A \subset \cC_v\}}  \one_{\{w\in \sH_A(\tpluss-\tau_v,\tpluss)\}} e^{\tau_v} \sm_{\tpluss}(w) \;\Big|\;\cU\bigg]\,.
\end{align}
The final step is to eliminate the conditioning on $\cU$ using the modified update history $\hsH$, which we recall does not remove vertices from the history along the unit interval $(\tpluss-1,\tpluss]$ and grants each vertex an automatic update at time $\tpluss$. As such, $\sH_u(t) \subset \hsH_u(t)$ for any vertex $u$ and time $t$.

We claim that each of the terms in the right-hand of~\eqref{eq-psi-bound-no-hat} is increasing in the percolation space-time slab (i.e., they can only increase when adding connections to the update histories). Indeed, this trivially holds for $\{A\subset \cC_v\}$; the variable $\tau_v$ is increasing as it may take only longer for $\cC_v$ to coalesce to a single point; finally, as the interval $(\tpluss-\tau_v,\tpluss]$ does not decrease and neither does $\sH_A$ along it, the event $\{w\in\sH_A(\tpluss-\tau_v,\tpluss)\}$ is also increasing.

Therefore, if we do not remove vertices from the update history along $(\tpluss-1,\tpluss]$ then the right-hand of~\eqref{eq-psi-bound-no-hat} could only increase. Further observe that, as long as no vertices are removed from the history along that unit interval, the connected components of the update history at time $\tpluss-1$ remain exactly the same were we to modify the update times of any vertex there, while keeping them within that unit interval.
In particular, should a vertex at all be updated in that period, we can  move its latest update time to $\tpluss$.

In this version of the update history (retaining all vertices in the given unit interval, and letting the latest most update, if it is in that interval, be performed at time $\tpluss$), the effect of conditioning on $\cU$ in that every $u\in A'$ receives an update at time $\tpluss$.
The fact that $\Po(\lambda \mid \cdot \geq 1) \preccurlyeq \Po(\lambda)+1$ for any $\lambda>0$ (as the ratio $\P(\Po(\lambda)=k)/\P(\Po(\lambda)>k)$ is monotone increasing in $k$) now implies (taking $\lambda\in(0,1)$) that the number of updates that any $u\in A'$ receives along $(\tpluss-1,\tpluss]$ conditioned on $\cU$ as part of $\sH$
is stochastically dominated by the corresponding number of updates as part of $\hsH$.

Altogether we conclude that the right-hand of~\eqref{eq-psi-bound-no-hat} can be increased to yield
\[ \P(A\in\red \mid \cU) \leq \E\bigg[ \sum_w \one_{\{A \subset \hcC_v\}}  \one_{\{w\in \hsH_A(\tpluss-\htau_v,\tpluss)\}} e^{\htau_v} \sm_{\tpluss}(w) \bigg]\,,\]
and combining this with~\eqref{eq-psi-bound-given-U} completes the proof.
\qed

\subsection{Discrete Fourier expansion for the update rules}\label{sec:fourier}
The following lemma, which constructs the modified update rules $\Phi_A$ (as described in~\S\ref{sec:framework}), will play a key role in the proof of Lemma~\ref{lem-todo}.
\begin{lemma}\label{lem-p(k,r)}
 For every $\epsilon>0$ there exists some $\kappa>0$ such that the following holds provided $\beta d < \kappa$.
For any $r\leq d$ there are nonnegative reals $\{ p_{k,r} : k=0,\ldots,r\}$ satisfying
\begin{align}\label{eq-p(k,r)-properties}
p_{0,r}\geq 1-\epsilon\,,\qquad
\sum_k \binom{r}k p_{k,r}=1\,,\qquad\mbox{ and }\qquad
 \binom{r}{k}p_{k,r} \leq D_0 (2\beta r)^k \quad\mbox{ for all $k$}\,,
\end{align}
where $D_0$ is an absolute constant, such that the Glauber dynamics can be coupled to an update function $\Phi$ that selects a subset $A\subset [r]$ of the neighbors of a degree-$r$ vertex with probability $p_{|A|,r}$ and applies to it a symmetric monotone boolean function $\Phi_A$ (i.e., $\Phi_A(-x)=-\Phi_A(x)$ and $\Phi_A(x)$ is increasing in $x$).
\end{lemma}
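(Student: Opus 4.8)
The plan is to realise the single-site heat-bath update of a degree-$r$ vertex as a two-stage rule of the required shape: first draw a random subset $A\subset[r]$ of its neighbours whose law $p_{|A|,r}$ depends only on $k=|A|$, and then output a symmetric monotone function $\Phi_A$ of the revealed spins. For $\Phi_A$ I take the majority vote, and I arrange the support of $(p_{k,r})$ to be $\{0\}$ together with the \emph{odd} values of $k$, so that majorities are never tied and $\Phi_A$ is a genuine symmetric monotone boolean function (the case $k=0$ being the oblivious update, i.e.\ a fair coin flipped by $U$). Write $u(m)=\tfrac12\big(1+\tanh(\beta m)\big)$ for the probability that a heat-bath update produces $+1$ when the neighbouring spins sum to $m=\sum_{i=1}^{r}\sigma_i$, put $q_k=\binom rk p_{k,r}$, and let $\psi_k(m)$ be the expectation of $\Phi_A(\sigma|_A)$ over a uniformly chosen $k$-subset $A$ of the neighbours --- an odd, nondecreasing function of $m$ that already equals $+1$ once $m$ is within $k-1$ of its maximum. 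Since the two-stage rule outputs $+1$ with probability $\tfrac12 p_{0,r}+\sum_{k\ge1}q_k\,\tfrac12(1+\psi_k(m))$, and since $\binom r0=1$, imposing the normalisation $\sum_k\binom rk p_{k,r}=1$ reduces the requirement that this rule couple to the heat-bath kernel $u(m)$ to the single family of identities
\[
\sum_{k\text{ odd}}q_k\,\psi_k(m)=\tanh(\beta m)\qquad\text{for every attainable }m.
\]
Evaluating this at the maximal $m$, where every $\psi_k$ equals $1$, forces $\sum_{k\ge1}q_k=\tanh(\beta r)$, whence $p_{0,r}=1-\tanh(\beta r)\in[1-\epsilon,1]$ as soon as $\beta d<\kappa$ for $\kappa$ small; it then remains to solve the displayed system for the $q_k$ and to show the solution is nonnegative with $q_k\le D_0(2\beta r)^k$.

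This is where the discrete Fourier expansion comes in. Since $\tanh(\beta\sum_i\sigma_i)$ and each $\psi_k$ are invariant under permutations and joint sign flips of the $\sigma_i$, both are linear combinations of the elementary symmetric polynomials $E_j(\sigma)=\sum_{|T|=j}\prod_{i\in T}\sigma_i$ over odd $j\le r$. Writing $\tanh(\beta\sum_i\sigma_i)=\sum_{j\text{ odd}}b_j E_j$ and $\psi_k=\sum_{j\le k}a_{k,j}E_j$ --- where $a_{k,j}$ is the level-$j$ Fourier coefficient of the $k$-bit majority function times the hypergeometric weight $\binom{r-j}{k-j}/\binom rk$ from sampling the subset --- and matching the coefficient of each $E_j$ turns the identity into a triangular linear system $\sum_{k\ge j}q_k a_{k,j}=b_j$, whose size (the number of odd $j$ in $\{1,\dots,r\}$) equals the number of positive attainable values of $m$, so it is square and uniquely solvable by back-substitution from $j=r$ (or $r-1$) down to $j=1$. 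The quantitative estimates then rest on two facts. First, the Fourier coefficients of $\tanh(\beta\sum_i\sigma_i)$ decay geometrically: since $\tanh$ is analytic in the strip $|\mathrm{Im}\,z|<\pi/2$, Cauchy estimates bound $|b_j|$ by $C\,j!\,(C'\beta)^j$, hence $\binom rj|b_j|\le (C''\beta r)^j$; a careful accounting of the constants makes the effective base smaller than the $2$ appearing in the statement, which is precisely why the target reads $2\beta r$ rather than $\beta r$. Second, the diagonal entries $a_{j,j}$ --- governed by the top Fourier coefficient $\binom{j-1}{(j-1)/2}2^{-(j-1)}$ of the $j$-bit majority --- are bounded below by only a polynomially small amount in $j$. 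Feeding these into the back-substitution, and noting that the off-diagonal corrections are of strictly higher order in $\beta r$, gives $q_k\le D_0(2\beta r)^k$ for an absolute $D_0$ (compatibly with $\sum_{k\ge1}q_k=\tanh(\beta r)\le\beta r$).

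The delicate point --- and the step I expect to be the main obstacle --- is the \emph{nonnegativity} of the solved $q_k$: the system is exactly determined, leaving no slack to correct a wrong sign. It works because, level by level, the sign of the leading Taylor coefficient of $\tanh$ and the sign of the top Fourier coefficient of the corresponding majority function agree (both are $(-1)^{(j-1)/2}$), so the dominant term $b_j/a_{j,j}$ of $q_j$ is positive while the corrections it receives from the higher-index unknowns are of order $(2\beta r)^{j+2}$ and cannot overturn it for $\beta d<\kappa$. (If one prefers, feasibility can be argued abstractly: at $\beta=0$ the system is solved by $q_k\equiv0$, $p_{0,r}=1$, and a quantitative perturbation argument in $\beta$ --- inverting the triangular matrix stably via the lower bounds on $a_{j,j}$ --- keeps the solution in the positive cone for $\beta d<\kappa(\epsilon)$; additional room, if needed, is bought by enlarging $\{\Phi_A\}$ to include all $U$-randomised mixtures of threshold rules, which only adds admissible building blocks.) Once the $p_{k,r}$ and the rules $\Phi_A$ are fixed, the coupling with the Glauber dynamics is immediate: conditional on the neighbouring spins the new spin has law $u(m)$ by construction, so one couples this single-site kernel --- and therefore, running over the whole update sequence, the generalised graphical construction --- to the standard one. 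Monotonicity and sign-symmetry of $\Phi_A$ are built in, and since the $k=0$ branch is exactly the oblivious update, the bound $p_{0,r}\ge1-\epsilon$ is just the statement that the modified dynamics still performs an oblivious update on all but an $\epsilon$-fraction of its updates.
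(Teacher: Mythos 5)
Your proposal shares the paper's core mechanism (a discrete multilinear/Fourier expansion of the heat-bath rule, restricted to odd-size subsets, with geometrically decaying coefficients), but you then lock in the \emph{majority} function as the building block $\Phi_A$ and try to solve for the $q_k$ via a triangular linear system. The paper takes the reverse route: it first computes the multilinear coefficients $C_k$ of $\tanh(\beta\sum_i\sigma_i)$, then \emph{designs} $\Phi_A$ to carry a $\sign(C_{|A|})$ term, $\Phi_A(\sigma_A)=\tfrac12+\tfrac{1}{2(|A|+1)}\bigl[\sum_{i\in A}\sigma_i+\sign(C_{|A|})\prod_{i\in A}\sigma_i\bigr]$, so that the probabilities $p_{k,r}=2|C_k|(k+1)$ for $k\ge2$ are nonnegative by fiat, with only a single cancellation to check at $k=1$. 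This makes your ``delicate point'' vanish entirely, and it is the real content of the lemma's freedom in choosing the update rules.

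The gap in your argument is precisely where you locate it, and it is genuine. You assert that $b_j$ (the level-$j$ Fourier coefficient of $\tanh(\beta\sum_i\sigma_i)$) and the diagonal entry $a_{j,j}$ share the sign $(-1)^{(j-1)/2}$, with off-diagonal corrections too small to overturn this. But $b_j$ is not a single Taylor coefficient of $\tanh$; it aggregates contributions from every $\ell\ge j$ of matching parity via the multilinear collapse $(\sum_i\sigma_i)^\ell$. The leading $\ell=j$ term has size $\beta^j j!\,|B_j|$, whereas the tail contributes up to order $B(\beta r)^{j+2}/\binom{r}{j}$; their ratio behaves like $B(\beta r)^2/|B_j|$, and since $|B_j|$ decays geometrically (roughly like $(2/\pi)^j$, $\tanh$ having poles at $\pm i\pi/2$), the tail can dominate once $j$ exceeds roughly $\log(1/(\beta r))$. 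The $\ell=j+2$ contribution carries the opposite sign $(-1)^{(j+1)/2}$, so for such $j$ the sign of $b_j$ is not controlled, and your back-substitution then produces a negative $q_j$ with no slack to fix it in an exactly determined system. Your abstract fallback --- enlarging the family $\{\Phi_A\}$ to more general sign-symmetric monotone threshold rules --- is exactly the right escape and is essentially what the paper does; but as written, with majority fixed, the nonnegativity step does not go through and is not a perturbation of the $\beta=0$ case in any uniform sense across $j$.
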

\begin{proof}
 Setting
 \[ f(x) = \frac12 \left(\tanh(x)+1\right) = \frac{e^x}{e^x + e^{-x}} \]
 we have that the Glauber dynamics update function at a given site with neighbors $\sigma_1,\ldots,\sigma_r$ assigns it a new spin of $1$ with probability $f(\beta \sum_{i=1}^r \sigma_i)$. Writing $f(x)=\sum_{\ell=0}^\infty B_\ell x^\ell$, i.e.,
 \[ B_\ell = [x^\ell] f(x)\,,\]
and so, bearing in mind that $\tanh(z)$ has no singularities in the open disc of radius $\pi/2$ around 0 in $\mathbb{C}$ and thus $\sum B_\ell$ converges absolutely,
\[ B_0=B_1 = 1/2\qquad\mbox{ and } \qquad\sum  |B_\ell| = B\quad\mbox{ for some absolute constant $B>0$}\,.\]

Next, since $\sigma_i\in\{\pm1\}$ the power series is multi-linear in $\sigma_i$, whence
we can write
 \[ \left(\beta \sum \sigma_i\right)^\ell = \sum_{\substack{A\subset [r] \\ |A|\leq \ell}} C_{\ell,A} \prod_{i\in A} \sigma_i = \sum_{k=1}^{\ell\wedge r} C_{\ell,k} \sum_{|A|=k} \prod_{i\in A}\sigma_i\,,\]
where we used that the nonnegative coefficient $C_{\ell,A}$ depends by symmetry on $|A|$ rather than $A$ itself, thus we can write $C_{\ell,k}$ for $|A|=k$. (Note that for $\ell=1$ we have $C_{1,1}=\beta$.)

Now, for any particular $k\leq \ell\wedge r$, we can put $\sigma_1=\ldots=\sigma_r=1$ to find that
\[ \left(\beta r \right)^\ell = \sum_{i=0}^{\ell\wedge r} \sum_{|A|=i} C_{\ell,i} \geq \sum_{|A|=k} C_{\ell,k} = \binom{r}{k} C_{\ell,k}\,,\]
and so
\begin{equation}
  \label{eq-ckl-bound}
  0 \leq C_{\ell,k} \leq \frac{(\beta r)^\ell}{\binom{r}{k}}\,.
\end{equation}
Therefore, letting
\[ C_k =\sum_{\ell=k}^\infty C_{\ell,k} B_\ell\qquad\mbox{ for $k\geq 1$}\]
and recalling that $\sum |B_\ell| = B$, we see that
\begin{equation}
   \label{eq-ck-bound}
   |C_k| \leq  B \sum_{\ell \geq k} \frac{ (\beta r)^{\ell}}{\binom{r}k} \leq 2 B \frac{(\beta r)^{k}}{\binom{r}{k}}\,,
 \end{equation}
with the last inequality valid as long as $\beta r \leq 1/2$.

We now define $p_{k,r}$ as follows:
\begin{equation}\label{eq-p(k,r)-def}
  p_{k,r} = \left\{ \begin{array}{ll}
  2|C_k|(k+1)  & k \geq 2\,,\\
  \noalign{\medskip}
  2 \bigg(C_1 - \sum_{\substack{A'\ni 1 \\ |A'|\geq 2}} |C_{|A'|}|\bigg)  & k=1\,, \\
  \noalign{\medskip}
  1 - \sum_{k\geq 1} \binom{r}k p_{k,r} & k = 0\,.
  \end{array}
  \right.
\end{equation}
Our first step in verifying that this definition satisfies~\eqref{eq-p(k,r)-def} is to show that $0 < p_{1,r} < 1$.
For the upper bound, using~\eqref{eq-ck-bound} we have $p_{1,r}\leq 2|C_1| \leq 4 B \beta < 1$ for $\beta$ small enough.
For the lower bound, observe that since $B_1=1/2$, $C_{1,1}=\beta$ and $C_{\ell,1} \leq (\beta r)^\ell / r$ using~\eqref{eq-ckl-bound},
\begin{align}
C_1 &\geq \frac{\beta}2 - \sum_{\ell=2}^{\infty} C_{\ell,1} | B_\ell |
\geq \frac{\beta}2 - \frac{B}r \sum_{\ell \geq 2}  (\beta r)^\ell \geq
\beta\left(\tfrac12 - 2 \beta r B\right) > \beta/4
\,.\label{eq-C1-lower-bnd}
\end{align}
as long as $\beta < 1/(4rB)$.
On the other hand, again appealing to~\eqref{eq-ck-bound},
\begin{align}
  \sum_{\substack{A'\ni 1 \\ |A'|\geq 2}}\left|C_{|A'|}\right| = \sum_{k=2}^r \binom{r-1}{k-1} |C_k|
  \leq 2B  \sum_{k=2}^r \frac{k}r (\beta r)^k
  = 2 B \beta \sum_{k=2}^{r} k (\beta r)^{k-1} \leq \beta / 8
  \label{eq-sum-A'-upper-bnd}
\end{align}
provided $\beta r $ is sufficiently small.
Combining the last two displays yields $p_{1,r} \geq \beta/8$.

Next, we wish to verify that $\binom{r}k p_{k,r} \leq D_0 (2\beta r)^k$ for some absolute constant $D_0$ and all $k$. Let $D_0 = 4 B$ and note that for $k=0$ the sought inequality is trivial since $D_0 > 1$ (recall $B\geq B_0+B_1 = 1$) whereas $p_{0,r} < 1$ (we have shown that $p_{1,r}>0$ and clearly $p_{k,r}\geq 0$ for all $k\geq 2$).
 For $k=1$ we again recall from~\eqref{eq-ck-bound} that
$ r p_{1,r} \leq 2r |C_1| \leq 4 \beta r B < D_0 (2\beta r)$, and similarly, for $k\geq 2$ we have
\[ \binom{r}k p_{k,r} = 2 \binom{r}k |C_k|(k+1) \leq 4B (k+1)(\beta r)^k \leq 4 B (2\beta r)^k = D_0 (2\beta r)^k\,.\]
For any sufficiently small $\beta r$ this of course also shows that $p_{k,r}\leq 1$ for all $k$, as well as the final fact that $p_{0,d} \geq 1-\epsilon$ since
\begin{align}\label{eq-p-0,d}
 \sum_{k\geq 1} \binom{r}k p_{k,r} \leq D_0 \sum_{k\geq 1} (2\beta r)^k < 4\beta r D_0 < \epsilon
 \end{align}
for a small enough $\beta r$.

Having established that desired properties for $\{ p_{k,r} : 0\leq k \leq r\}$, define the new update function $\Phi$ which will examine a random subset $A$ of the $r$ neighbors of a vertex, selected with probability $p_{|A|,r}$ (giving a proper distribution over the subsets of $[r]$ since $\sum_k\binom{r}{k}p_{k,r}=1$ as shown above), then apply the following function $\Phi_A$ to determine the probability of a plus update given $\sigma_A = \{ \sigma_i : i \in A\}$.
\begin{equation}
  \Phi_A(\sigma_A) = \left\{ \begin{array}{ll} \frac12 & A=\emptyset\,, \\
    \noalign{\medskip}
  \frac12 + \frac12 \sigma_i & A = \{i\}\,, \\
  \noalign{\medskip}
  \frac12 + \frac1{2(|A|+1)}\left[\sum_{i\in A}\sigma_i + \sign(C_{|A|})\prod_{i\in A}\sigma_i\right] & |A|\geq 2\,.\end{array}
  \right.
\end{equation}
In order to establish that $\Phi$ can be coupled to the Glauber dynamics, we need to show that
$f(\beta\sum_{i=1}^r\sigma_i)$ identifies with $\E[\Phi(\sigma_1,\ldots,\sigma_r)] $ over all inputs $\{\sigma_i\}$.
Since $B_0=1/2$, we must show that $\E[\Phi]-1/2$ is equal to $\sum_{\ell=1}^\infty B_\ell (\beta \sum \sigma_i)^\ell$. Indeed,
\begin{align*}
\E[\Phi]-\frac12 &=   \sum_i \sigma_i \bigg(C_1 - \sum_{\substack{A' \ni i \\
|A'| \geq 2}} C_{|A'|}\bigg) + \sum_{|A|\geq 2} |C_{|A|}|\bigg( \sum_{i\in A}\sigma_i + \sign(C_{|A|}) \prod_{i\in A}\sigma_i\bigg)\\
&=\sum_{|A|\geq 1}  C_{|A|} \prod_{i\in A}\sigma_i
= \sum_{\ell=1}^{\infty}\sum_{k=1}^{\ell\wedge r} \sum_{|A|=k}  C_{\ell,k} B_\ell \prod_{i\in A}\sigma_i = \sum_{\ell=1}^{\infty} B_\ell (\beta \sum \sigma_i)^\ell\,,
\end{align*}
with the last two equalities following from the definition of $C_k$ and $C_{\ell,k}$.
This completes the proof.
\end{proof}

\subsection{Exponential decay of cluster sizes: Proof of Lemma~\ref{lem-todo}}

Using the update rule from Lemma~\ref{lem-p(k,r)}, the probability that an update of a vertex $v$ of degree $r\leq d$ will examine precisely $k$ of its neighbors is
\[ \binom{r}k  p_{k,r} \leq D_0(2\beta r)^k \leq D_0 (2\beta d)^k \,,\]
with the inequality thanks to~\eqref{eq-p(k,r)-properties}.
The probability that a given neighbor of $v$, with degree some $r'\leq d$, receives an update in which it examines both $v$ and $k-1$ additional neighbors is at most
\[ \max_{r'\leq d} p_{k,r'} \binom{r'-1}{k-1} \leq
\max_{r'\leq d} \frac{k}{r'} D_0 (2\beta r')^k
= \frac{1}d  D_0 (3\beta d)^k \,,\]
using that $x^{1/x} \leq e^{1/e} < 3/2$ for all $x\geq 2$.
Hence, the  rate at which the history of the vertex $v$ expands to $k$ additional vertices along the time interval $(0,\tpluss)$ is at most
\[ D_0 \left( 1 + r/d\right) (3\beta d)^k \leq 2 D_0 (3\beta d)^k\,.\]
By the same reasoning, the extra update at time $\tpluss$ that is applied to $v$ in $\hsH$ connects it to $k$ of its neighbors ($k=0,\ldots,r$) with probability at most $D_0(2\beta d)^k$, while each of its $r$ neighbors contributes at most $k$ new points with probability at most $D_0 (3\beta d)^k/d$.

We now develop the cluster of the vertex $(w_0,t_0)$ in the space-time slab by exploring the branch at $w_0$, both forwards and backward in time, examining which connections it has to new vertices --- either through its own updates or through those which examine it --- until it terminates via oblivious updates in both directions. We then repeat this process with one of the points discovered in the exploration process (arbitrary chosen), until all such points are exhausted and the cluster is completely revealed.

Let $Y_m$ denote the number of vertices explored in this way after iteration $m$ (i.e., $Y_1$ is the number of vertices discovered via the branch incident to $(w_0,t_0)$, etc.), and let $Z_m$
be the total length of edges in the time dimension (i.e., $(z,a),(z,b)$ for $z\in V$ and $0\leq a < b \leq \tpluss$) explored by then.
We can stochastically dominate these by a process $(\bar{Y}_m,\bar{Z}_m)\succeq (Y_m,Z_m)$ given as follows.

First, for the length variable, we apply Lemma~\ref{lem-p(k,r)} with $\epsilon= (1-\eta)/4$, and put
\begin{align*}
  \bar{Z}_0&=0\,,\\
   \bar{Z}_m &= \bar{Z}_{m-1} + W_m\quad\mbox{ where }\quad W_m\sim 1+\Gamma(2,1-\epsilon)\,,
\end{align*}
 with the gamma variable $\Gamma(2,1-\epsilon)$ measuring the time until the explored branch terminates (in both ends) using the key estimate $p_{0,r}\geq 1-\epsilon$ from Lemma~\ref{lem-p(k,r)},
translated by 1 to account for the unit interval $(\tpluss-1,\tpluss]$ in which vertices are not removed from $\hsH$.

For the vertex count variable, with the above discussion above in mind, observe that conditioned on $W_m$ the number of new vertices exposed along the new branch is dominated by $\sum_{k=1}^d V_m^{(k)}$, in which
\[ V_m^{(k)} \sim k \Po\left( 2 D_0 (3\beta d)^k W_m \right)\qquad (k=1,\ldots,d) \]
are mutually independent, while the extra update at time $\tpluss$ (should the branch extend to that time) introduces at most $\sum_{k=0}^d \hat{V}_m^{(k)}$ additional vertices, where all $V_m^{(k)}$ and $\hat{V}_m^{(k)}$ are independent, given by
\[ \P\left(\hat{V}_{m}^{(0)} = j\right) \leq D_0 (2\beta d)^j\,,\qquad
\P\left(\hat{V}_m^{(k)} =j\right) \leq D_0 (3\beta d)^j/d\qquad (k=1,\ldots,d)\,.
\]
Therefore, with this notation, we write
\begin{align*}
  \bar{Y}_0 &= 1\,,\\
  \bar{Y}_m &= \bar{Y}_{m-1} + U_m\quad\mbox{ where }\quad U_m = \sum_{k=1}^d V_m^{(k)} + \sum_{k=0}^d \hat{V}_m^{(k)}\,.
\end{align*}
Letting $\tau \geq 1$ be the iteration after which the exploration process exhausts all new vertices (so $\tau = 1$ iff both ends of the branch of $(v_0,t_0)$ terminated before introducing any new vertices to the cluster), we wish to show that
\begin{align}
  \label{eq-exp-barZ-barY-bound}
  \E \left[\exp(\eta \bar{Z}_\tau + \lambda \bar{Y}_\tau) \right]\leq \gamma
\end{align}
for $\gamma(\lambda,\eta) < \infty$. We may assume without loss of generality --- recalling that $\epsilon = (1-\eta)/4 \leq \frac14$ --- that
\begin{equation}
  \label{eq-lambda-assumption}
  \lambda \geq 4\log(1/\epsilon) \,,
\end{equation}
as the left-hand of~\eqref{eq-exp-barZ-barY-bound} is monotone increasing in $\lambda$.
Observe that as long as $3\beta d e^{\lambda} < 1/2$ we have
\begin{align*}
  \E \left[ \exp\left(\lambda \hat{V}_m^{(0)}\right) \right] &\leq 1 + D_0 \sum_{k\geq 1}\big(2\beta d e^\lambda \big)^k \leq 1 + 4 D_0 \beta d e^{\lambda}
  \end{align*}
as well as
\begin{align*}
  \prod_{k=1}^d \E \left[ \exp\left(\lambda \hat{V}_m^{(k)}\right) \right] &\leq 1 + 2 D_0 \sum_{k\geq 1}\big(3\beta d e^\lambda \big)^k \leq 1 + 12 D_0 \beta d e^{\lambda} \,,
\end{align*}
and similarly,
\begin{align*}
  \prod_{k=1}^d \E \left[ \exp\left(\lambda V_m^{(k)}\right) \;\Big|\; W_m \right]&= \exp\bigg[  2 W_m D_0   \sum_{k=1}^d (e^{\lambda k}-1)(3\beta d)^k\bigg] \leq
  \exp\left[  12 W_m D_0  \beta d e^\lambda \right]\,.
\end{align*}
We can further assume that
\[ h(\lambda) := D_0 \beta d e^{\lambda} \quad \mbox{ satisfies } \quad 12 h(\lambda) < (1-\eta)/2 = 1 - 2\epsilon - \eta\,,\]
achievable by letting $\beta d$ be sufficiently small. With this notation,
\[ \E\left[ e^{\lambda U_m + \eta W_m } \;\Big|\;  W_m \right] \leq e^{\left(12 h(\lambda) + \eta\right) W_m  + 16 h(\lambda)}\,,\]
and upon taking expectation over $W_m$, having $12 h(\lambda) + \eta < 1-2\epsilon$ implies that the
moment-generating function of the gamma distribution will only contribute a polynomial factor, giving that
\begin{equation}
  \label{eq-exp-Um-Wm}
  \E\left[ e^{\lambda U_m + \eta W_m } \right] \leq e^{28 h(\lambda)+\eta}\bigg(1 - \frac{12 h(\lambda)+\eta}{1-\epsilon}
 \bigg)^{-2} \leq \epsilon^{-2} e^{28 h(\lambda)}\,.
\end{equation}
Combining this with our definition of $\bar{Y}_m = 1 + \sum_{i=1}^m U_i$ and $\bar{Z}_m =\sum_{i=1}^m W_i$, we find that
\begin{align*}
 \E\left[ e^{\lambda \bar{Y}_\tau + \eta \bar{Z}_\tau } \right] &= \E \left [\sum_{m=1}^\infty e^{-\lambda m + 2 \lambda \bar{Y}_m + \eta \bar{Z}_m} \one_{\{\tau = m\}}\right] \leq
 \sum_{m=1}^\infty e^{-\lambda m} \E \left[ e^{2 \lambda \bar{Y}_m + \eta \bar{Z}_m} \right] \\
& =\sum_{m=1}^\infty e^{\lambda (2-m)} \left( \E \left[e^{2 \lambda U_1 + \eta W_1}\right]\right)^m
\,,
\end{align*}
which, recalling~\eqref{eq-exp-Um-Wm} and plugging in the expression for $h(2\lambda)$, is at most
\begin{align*}
&e^{2\lambda}\sum_{m=1}^\infty \exp\left[ m \left(-\lambda + 2\log\big(\tfrac{4}{1-\eta}) + 28 h(2\lambda) \right)\right] \leq e^{2\lambda}\sum_{m=1}^\infty  \exp\left[ m \left(-\lambda/2 + 28h(2\lambda) \right)\right]
= \gamma < \infty
\end{align*}
using~\eqref{eq-lambda-assumption} for the first inequality and, say, that $28 h(2\lambda) \leq \lambda/3$ (achieved by taking $\beta d$ small enough) for the second one.
(Note that $\gamma=\gamma(\lambda,\eta)$, as the assumption~\eqref{eq-lambda-assumption} introduces a dependence on $\eta$).
This establishes~\eqref{eq-exp-barZ-barY-bound} and thereby concludes the proof.
\qed

\section{The effect of initial conditions on mixing}\label{sec:ann-que}

In this section we consider random initial conditions (both quenched and annealed), and prove Theorem~\ref{mainthm-ann-que}.
The first observation is that, thanks to Theorem~\ref{mainthm-gen}, the worst-case mixing time satisfies
\[ \tmix(\alpha) = \tcut + O(1)\quad\mbox{ for any fixed $0<\alpha<1$}\,,\]
with $\tcut$ as defined in~\eqref{eq-t*-def}, and moreover, the same holds for $\tmix^{(+)}(\alpha)$, the mixing time started from all-plus.
  By Claim~\ref{clm:MagnetDecay} we have $\frac12\log n \leq \tcut \leq (\frac12 + \epsilon_\beta)\log n$ with $\epsilon_\beta = \beta d/(2-2\beta d)$ vanishing as $\beta\downarrow0$.
Thus, we may prove the bounds on the annealed / quenched mixing times when replacing $\tcut$ by $\frac12\log n$.

\subsection{Annealed analysis}

As mentioned in the introduction, rather than comparing two worst case boundary conditions we will compare a random one directly with the stationary distribution: By considering updates in the range $t\in(-\infty,\tcut]$ we can use the coupling from the past construction to generate a coupling with the stationary distribution. Let $X_t$ denote the process started from uniform initial conditions at time 0 and let $Y_t$ be the process generated by coupling from the past.

The information percolation clusters of $V$ will now be defined as the connected components of the graph on the vertex set $V$ where $(u,v)$ is an edge iff $\sH_u(t)\cap \sH_v(t) \neq \emptyset $ for some $-\infty < t \leq \tcut$ (in contrast to the previous definition where we had $0<t \leq \tcut$). The notion of being a red cluster is redefined to be any $\cC_v$ such that
$
\left| \bigcup_{u\in \cC_v} \sH_u(t') \right | \geq 2
$
for all $0\leq t' \leq \tcut$.  Blue clusters will be defined as before and green clusters will again be the remaining clusters.  We claim that we can couple the spins at time $\tcut$ of all non-red clusters. Indeed if a cluster $\cC_v$ is not red then there is some time $t'>0$ such that $|\bigcup_{u\in \cC_v} \sH_u(t')|=1$.  Call this vertex $w$.  By symmetry both $X_{t'}(w)$ and $Y_{t'}(w)$ are equally likely to be plus or minus and so we may couple them to be equal independently of the spins of the other clusters.  We may then also couple the spins in that cluster to be the same in both $X_t$ and $Y_t$ to be equal for all $t>t'$.  Thus the configurations will agree outside of the red clusters.

Let $\anim(A)$ denote the size of the smallest connected set of vertices (animal) containing $A$.  In a graph of maximum degree $d$, the number of trees of size $k$ containing the vertex $v$ is bounded above by $(ed)^k$ and hence the number of animals $A$ containing a specified vertex with $\anim(A)=k$ is at most $(ed)^k$.

\begin{lemma}\label{l:annealedPsi}
For any $d,C,\epsilon > 0$ there exists $\beta_0>0$  such that the following holds for large enough $n$. If $0<\beta<\beta_0$ and $\tpluss=(\frac14+\epsilon)\log n$ then for any $A$,
\[
\sup_{\sH_A^-} \P\left(A\in \red \mid \sH_A^-\,,\{A\in\red\}\cup\{A\subset V_\blue\}\right) \leq   \frac1{\sqrt{n}\log n} e^{- C \anim(A)}\,.
\]
\end{lemma}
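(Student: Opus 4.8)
The plan is to mimic the proof of Lemma~\ref{lem-Psi}, with the survival-to-time-zero estimate now supplied by Lemma~\ref{lem-todo} used with $\eta$ close to $1$ rather than by the magnetization: the point of the annealed redefinition of \red\ is that it forces the cluster to carry \emph{two} strands all the way down to time $0$, which contributes a factor $e^{-2\tpluss}\approx n^{-1/2}$ in place of the $e^{-\tpluss}$ of the worst-case analysis.

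First I would copy the opening reduction in the proof of Lemma~\ref{lem-Psi} essentially verbatim. Conditioning on $\sH_A^-$ together with $\{A\in\red\}\cup\{A\subset V_\blue\}$ forces the histories of $A$ to avoid $\sH_A^-$ (the event $\cM$); writing the conditional probability as the corresponding ratio, the numerator is at most $\P(A\in\red\mid\cU)$, while the denominator is bounded below by $2^{-|A|}$ exactly as in Lemma~\ref{lem-Psi} (the notion of a blue singleton is unchanged), giving
\[
\sup_{\sH_A^-}\P\big(A\in\red\mid\sH_A^-,\{A\in\red\}\cup\{A\subset V_\blue\}\big)\leq 2^{|A|}\,\P(A\in\red\mid\cU)\,.
\]
As at the end of the proof of Lemma~\ref{lem-Psi}, every event I go on to bound is increasing in the space-time slab, so replacing $\sH$ by the modified history $\hsH$ simultaneously removes the conditioning on $\cU$ (via the domination $\Po(\lambda\mid\cdot\geq1)\preccurlyeq\Po(\lambda)+1$ on the unit interval $(\tpluss-1,\tpluss]$) and can only enlarge the bound.

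The heart of the argument is then two deterministic observations, valid for any fixed $v\in A$ on the event that $A$ is a red cluster of $\hsH$. First, since $A\subset\hcC_v$ and the space-time cluster $\hcX_{v,\tpluss}$ is connected, its vertex footprint $\hsH_{\hcC_v}$ is a connected subset of $V$ containing $A$, hence $|\hsH_{\hcC_v}|\geq\anim(A)$. Second, by the annealed definition of \red\ the cluster never coalesces to a single site above time $0$, i.e.\ $|\hsH_{\hcC_v}(t')|\geq2$ for all $t'\in[0,\tpluss)$; integrating over $[0,\tpluss]$ this gives $\len(\hcX_{v,\tpluss})\geq2\tpluss$, with $\len$ as in Lemma~\ref{lem-todo}. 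Consequently
\[
\P(A\in\red\mid\cU)\leq\E\Big[\one_{\{|\hsH_{\hcC_v}|\geq\anim(A)\}}\,\one_{\{\len(\hcX_{v,\tpluss})\geq2\tpluss\}}\Big]\,,
\]
and I apply Lemma~\ref{lem-todo} with $\lambda=C+1$ and with $\eta\in(0,1)$ chosen close enough to $1$ that $2\eta(\tfrac14+\epsilon)>\tfrac12$ (possible since $\tfrac14+\epsilon>\tfrac14$), taking $\beta_0:=\big(\kappa(\eta,\lambda)/d\big)\wedge\tfrac1{5d}$. By Markov's inequality the last display is at most $\gamma(\eta,\lambda)\,e^{-2\eta\tpluss}e^{-(C+1)\anim(A)}$. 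Plugging in $\tpluss=(\tfrac14+\epsilon)\log n$ gives $e^{-2\eta\tpluss}=n^{-1/2-\delta}$ for the fixed $\delta=2\eta(\tfrac14+\epsilon)-\tfrac12>0$, whence $\gamma e^{-2\eta\tpluss}\leq n^{-1/2}/\log n$ for large $n$; and since $\anim(A)\geq|A|$ the leftover $2^{|A|}$ is absorbed, as $2^{|A|}e^{-(C+1)\anim(A)}\leq e^{-C\anim(A)}$. Combining the two displays yields the claimed bound $\tfrac1{\sqrt n\log n}e^{-C\anim(A)}$.

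I expect the main obstacle to be the second deterministic observation and the bookkeeping around it: one must check that the annealed notion of a red cluster genuinely precludes the cluster coalescing to a single site at any time in $(0,\tpluss)$ --- so that $\len(\hcX_{v,\tpluss})\geq2\tpluss$ --- and that this, jointly with $\{A\subset\hcC_v\}$, survives the passage from the $\cU$-conditioned ordinary history to the unconditioned $\hsH$, as in Lemma~\ref{lem-Psi}. This is precisely where the factor $2$ in the exponent of $\tpluss$ is created (the seed of the eventual factor-$\tfrac12$ improvement in the annealed mixing time); once it is in place, the remainder is a routine application of Lemma~\ref{lem-todo} with $\eta\uparrow1$.
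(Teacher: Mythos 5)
Your proposal is correct and follows essentially the same route as the paper: bound the conditional probability by $2^{|A|}\P(A\in\red\mid\cU)$ exactly as in Lemma~\ref{lem-Psi}, observe that annealed-redness forces $|\hsH_{\hcC_v}|\geq\anim(A)$ and $\len(\hcX)\geq 2\tpluss$, pass to $\hsH$ by the same monotonicity argument, and finish with Markov's inequality via Lemma~\ref{lem-todo} with $\eta$ close to $1$. The only cosmetic difference is your choice $\lambda=C+1$ in place of the paper's $\lambda=\log 2+C$, both of which absorb the leftover $2^{|A|}$ factor.
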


\begin{proof}
Similarly to  the proof of Lemma~\ref{lem-Psi} we have the analogue of equation~\eqref{eq-psi-bound-given-U}
\[
\P\left(A\in\red\mid\sH_A^-\,,\{A\in\red\}\cup\{A\subset V_\blue\}\right) \leq 2^{|A|} \P(A\in\red \mid \cU)\, ,
\]
where $\cU$ is defined as in the proof of Lemma~\ref{lem-Psi}.  Then for any $v\in A$,
\begin{align*}
\P(A \in \red \mid \cU) \leq  \P\left(|\sH_{\cC_v}| \geq \anim(A)\,,\,\len(\cX_{v,0})\geq 2 \tpluss \;\Big|\; \cU\right)\,,
\end{align*}
since the total length of a red cluster must be at least $2 \tpluss$ and it must contain at least $\anim(A)$ vertices.  Both $|\sH_{\cC_v}|$ and $\len(\cX_{v,0})$ are increasing in the component sizes, and so, by the same monotonicity argument as Lemma~\ref{lem-Psi}, we have that
\begin{align*}
\P\left(A\in\red\mid\sH_A^-\,,\{A\in\red\}\cup\{A\subset V_\blue\}\right) \leq 2^{|A|}  \P\left(|\hsH_{\hcC_v}| \geq \anim(A)\,,\,\len(\hcX_{v,0})\geq 2 \tpluss \right)\,.
\end{align*}
Taking $\lambda =\log 2 +C$ and $\frac14 (\frac14 + \epsilon)^{-1}< \eta < 1$ in Lemma~\ref{lem-todo} then shows that, for $\beta_0$ small enough,
\begin{align*}
\P\left(|\hsH_{\hcC_v}| \geq \anim(A)\,,\,\len(\hcX_{v,0})\geq 2 \tpluss \right) &\leq \frac{\E\left[ \exp\left( \eta \len(\hcX_{v,0}) + \lambda |\hsH_{\hcC_v}|\right)\right]}{\exp\left( 2 \eta \tpluss + \lambda \anim(A)\right)} \\
&\leq \gamma \exp\Big( - 2 \eta (\tfrac14 + \epsilon) \log n - (\log 2 +C) \anim(A)\Big)\\& \leq \frac1{\sqrt{n}\log n} 2^{- \anim(A)} e^{- C \anim(A)}
\end{align*}
(with room, as we could have replaced the $\sqrt{n}\log n$ by some $n^{1/2+\epsilon'}$), which completes the proof.
\end{proof}

We now establish an upper bound on $\tmix^\uni$, the mixing time starting from the uniform distribution.

\begin{proposition}\label{prop:anneal}
For any $d,\epsilon > 0$ there exists $\beta_0>0$ such that the following holds. If $0<\beta < \beta_0$ and $\tpluss=(\frac14+\epsilon)\log n$ then
$
\|\P(X_{\tpluss} \in \cdot) - \pi \|_{\tv} \to 0
$
as $n\to\infty$.
\end{proposition}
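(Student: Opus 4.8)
The plan is to exploit the coupling-from-the-past construction set up above, under which $Y_{\tpluss}\sim\pi$ exactly and, by the coalescence argument preceding the proposition, $X_{\tpluss}$ and $Y_{\tpluss}$ agree on every non-red cluster, so that the two configurations can differ only on $V_\red$. Mimicking the reduction that produced~\eqref{eq-req-exp-bound} in the worst-case proof, I would first project onto $V\setminus V_\green$ and condition on $\sH_\green$ (legitimate, exactly as in the worst-case analysis, because given $\sH_\green$ the spins of $V_\green$ are a deterministic function of one uniform bit per green cluster together with updates inside the green region, hence independent of everything else and common to $X$ and $Y$). This gives
\[ \|\P(X_{\tpluss}\in\cdot)-\pi\|_\tv \;\le\; 2\sup_{\sH_\green}\max\Big\{\big\|\mu_X-\nu_{V\setminus V_\green}\big\|_\tv\,,\,\big\|\mu_Y-\nu_{V\setminus V_\green}\big\|_\tv\Big\}\,, \]
where $\nu_A$ is uniform on $\{\pm1\}^A$ and $\mu_X,\mu_Y$ denote the conditional laws on $V\setminus V_\green$ of $X_{\tpluss},Y_{\tpluss}$ given $\sH_\green$. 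The crucial observation is that \emph{both} $\mu_X$ and $\mu_Y$ are of the Miller--Peres form of Lemma~\ref{lem:MP} with the \emph{same} random set $R=V_\red$: every blue cluster is a singleton strand carrying a uniform spin --- in $X$ inherited from the uniform initial state and in $Y$ drawn from the symmetry of the past updates --- while the red clusters carry arbitrary (and generally different) spin laws. Applying Lemma~\ref{lem:MP} to each, together with $\|\cdot\|_\tv\le\tfrac12\|\cdot\|_{L^2(\nu)}$, bounds both distances by $\big(\sup_{\sH_\green}\E[2^{|V_\red\cap V_{\red'}|}\mid\sH_\green]-1\big)^{1/2}$ for i.i.d.\ copies $V_\red,V_{\red'}$ of the red vertex set.

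It then remains to show this exponential moment tends to $1$. Here I would reuse the coupling with the independent indicators $\{Y_{A,A'}\}$ and the factorization carried out between~\eqref{eq-exp-moment-2} and~\eqref{eq-req-exp-bound}, which go through verbatim (they rely only on the definition of $\Psi_A$ and the disjointness of red clusters), yielding $\sup_{\sH_\green}\E[2^{|V_\red\cap V_{\red'}|}\mid\sH_\green]\le\exp[\sum_v(\sum_{A\ni v}2^{|A|}\Psi_A)^2]$. Now feed in Lemma~\ref{l:annealedPsi} with $C=\log(4ed)$ (choosing $\beta_0$ accordingly, with $\tpluss=(\tfrac14+\epsilon)\log n$ as required there), and use that there are at most $(ed)^k$ animals $A\ni v$ with $\anim(A)=k$ while $2^{|A|}\le 2^{\anim(A)}$:
\[ \sum_{A\ni v}2^{|A|}\Psi_A \;\le\; \frac1{\sqrt n\,\log n}\sum_{k\ge1}\big(2ed\,e^{-C}\big)^k \;\le\; \frac1{\sqrt n\,\log n}\,, \]
whence $\sum_v\big(\sum_{A\ni v}2^{|A|}\Psi_A\big)^2\le n\cdot(\sqrt n\log n)^{-2}=(\log n)^{-2}\to0$. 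Consequently the exponential moment is $1+O((\log n)^{-2})$ and the left-hand side above is $O(1/\log n)\to0$, as claimed.

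The step that requires the most care --- and the reason one cannot simply argue that $V_\red=\emptyset$ with high probability --- is that at this early time $\tpluss\approx\tfrac14\log n$ the union bound $\P(V_\red\ne\emptyset)\le\sum_A\Psi_A$ is only polynomially small (of order $\sqrt n/\log n$), i.e.\ it diverges, so it is genuinely the second-moment cancellation of Lemma~\ref{lem:MP} --- the fact that $V_\red\cap V_{\red'}=\emptyset$ with overwhelming probability, the relevant quantity $\sum_v(\sum_{A\ni v}2^{|A|}\Psi_A)^2$ gaining an extra factor of $n^{-1}$ relative to $\sum_A\Psi_A$ --- that does the work. The other delicate ingredient, which the coupling-from-the-past construction exists precisely to supply, is that $\pi$ itself inherits the Miller--Peres product structure on $V\setminus V_\green$: any cluster whose combined history ever coalesces to a single point --- in particular every blue cluster --- receives there a uniform spin drawn from the past that matches in law the one $X$ draws from its uniform initial state, so that the comparison of $\mu_X$ with $\mu_Y$ can be routed through the common reference measure $\nu$.
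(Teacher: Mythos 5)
Your proposal is correct and follows the paper's own proof essentially step for step: the CFTP coupling that makes $X$ and $Y$ agree off the red clusters, the projection onto $V\setminus V_\green$ given $\sH_\green$, the Miller--Peres $L^2$-bound with the common $R=V_\red$ feeding into the same factorization that produced~\eqref{eq-req-exp-bound}, and then Lemma~\ref{l:annealedPsi} with $C$ of order $\log(4ed)$ together with the $(ed)^k$ animal count. (Two cosmetic remarks: your final order $O(1/\log n)$ is in fact the right exponent coming out of $\sum_v(\cdot)^2\le 1/\log^2 n$, and your aside characterizing the green-cluster spins as ``one uniform bit per green cluster'' is not quite accurate --- what matters, and what you use, is only that the conditional law on $V_\green$ given $\sH_\green$ is common to $X$ and $Y$ and independent of $V\setminus V_\green$.)
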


\begin{proof}
Having coupled $X_t$ and $Y_t$ as described above we have that
\begin{align*}
\left\| \P(X_{\tpluss}\in\cdot)-\P(Y_{\tpluss}\in\cdot)\right\|_\tv &\leq \E\left[\left\|\P\left(X_{\tpluss}(V \setminus V_\green) \in \cdot \mid \sH_\green\right) - \nu_{V\setminus V_\green}\right\|_\tv \right]\\
 &\qquad +\E\left[\left\|\P\left(Y_{\tpluss}(V \setminus V_\green) \in \cdot \mid \sH_\green\right) - \nu_{V\setminus V_\green}\right\|_\tv \right]
\end{align*}
where $\nu_A$ is the uniform measure on the configurations on $A$.  Similarly to the argument used to derive equation~\eqref{eq-exp-moment-2}, we find that
\begin{align*}
\left\| \P(X_{\tpluss}\in\cdot)-\P(Y_{\tpluss}\in\cdot)\right\|_\tv \leq
\left(\sup_{\sH_\green} \E
\left[2^{\left|V_\red \cap V_{\red'}\right|} \;\big|\; \sH_\green\right] - 1 \right)^{1/2}\,.
\end{align*}
With the same coupling as in the proof of Theorem~\ref{mainthm-gen}, analogously to equation~\eqref{eq-req-exp-bound}, we have
\begin{equation}
\| \P(X_{\tpluss}\in\cdot)-\P(Y_{\tpluss}\in\cdot)\|_\tv^2 \leq 2 \sum_v \bigg(\sum_{A \ni v} 2^{|A|} \Psi_{A}\bigg)^2\,.
 \end{equation}
Applying Lemma~\ref{l:annealedPsi} with $C = \lceil \log (4ed)\rceil$ (while recalling that $\#\{A \ni v: \anim(A)=k\}\leq (ed)^k$), we get
\begin{align*}
\sum_{A \ni v} 2^{|A|} \Psi_{A} &\leq  \sum_{k} \sum_{\substack{A:\anim(A)=k\\ v\in A}} \frac{2^k e^{- C k}} {\sqrt{n}\log n} \leq  \sum_{k} \frac{(2ed)^k e^{- C k}} {\sqrt{n}\log n} \leq \frac1{\sqrt{n}\log n}   \,,
\end{align*}
provided that $\beta>\beta_0$ with $\beta_0$ from that lemma.
It follows that
\begin{align*}
\| \P(X_{\tpluss}\in\cdot)-\P(Y_{\tpluss}\in\cdot)\|_\tv &\leq O\left(\log^{-2} n\right)\,,
\end{align*}
and in particular $
\| \P(X_{\tpluss}\in\cdot)-\pi\|_\tv = o(1)
$, as required.
\end{proof}
\begin{remark}
In the above proof one could instead carry the analysis as in the proof of Theorem~\ref{mainthm-gen} (partitioning the event in Lemma~\ref{lem-Psi} according to the events $\{|\hcC_v|=k\}$
when estimating the sum over $v\ni A$), that way replacing the factor of $(e d)^k$ lattice animals by $2^k$ subsets of $\hcC_v$. Consequently, the statement of Proposition~\ref{prop:anneal} remains valid for any $\beta < c_0 /d$ where $c_0$ depends on $\epsilon$ but not on $d$.
\end{remark}
\subsection{Quenched analysis}

Here we show that the mixing time from a typical random initial state is at most a factor of $1+\epsilon_\beta$ faster than that from the worst starting state.
As before, let $X_t$ be started from a uniformly chosen initial state $X_0$ and let $Y_t$ be started from the stationary distribution $\pi$.

\begin{proposition}
Let $\tminuss= \frac1{2}\log n - w_n$ for some $w_n \uparrow\infty$. Then
$
\|\P_{X_0}(X_{\tminuss} \in \cdot) - \pi \|_{\tv} \stackrel{p}{\to} 1
$
as $n\to\infty$.
\end{proposition}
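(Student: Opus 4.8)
The plan is to produce, for a typical fixed $X_0$, a one--dimensional statistic that separates $\P_{X_0}(X_{\tminuss}\in\cdot)$ from $\pi$; in the quenched setting the natural candidate is the overlap with the starting state,
\[
 f_{X_0}(\sigma)=\sum_{v\in V}X_0(v)\,\sigma(v)\,,
\]
which is $X_0$--measurable and hence admissible. Writing $\sm^{X_0}_t(v)=\E[X^{(X_0)}_t(v)\mid X_0]$ for the magnetization of the dynamics started from $X_0$, the conditional mean of the statistic is $D(X_0):=\E[f_{X_0}(X_{\tminuss})\mid X_0]=\sum_v X_0(v)\,\sm^{X_0}_{\tminuss}(v)$, while $\E_\pi f_{X_0}=0$ since $\E_\pi\sigma(v)=0$ by the spin--flip symmetry of $\pi$. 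The argument then has three parts: a first--moment bound $\E_{X_0}D(X_0)\ge\sqrt n\,e^{w_n}$; concentration $\var_{X_0}(D)=O(n)$, so that the event $G=\{X_0:D(X_0)\ge\tfrac12\sqrt n\,e^{w_n}\}$ has $\P_{X_0}(G)\to1$; and variance bounds $\var(f_{X_0}(X_{\tminuss})\mid X_0)\le\gamma n$ and $\var_\pi(f_{X_0})\le\gamma n$ for an absolute constant $\gamma$. Given these, on $G$ a double application of Chebyshev's inequality with threshold $\tau=\tfrac14\sqrt n\,e^{w_n}$ shows $\P_{X_0}(f_{X_0}(X_{\tminuss})>\tau)\ge1-O(e^{-2w_n})$ and $\pi(f_{X_0}>\tau)\le O(e^{-2w_n})$, whence $\|\P_{X_0}(X_{\tminuss}\in\cdot)-\pi\|_\tv\ge1-O(e^{-2w_n})$; since $\P_{X_0}(G)\to1$ and $w_n\to\infty$, this yields the assertion.

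For the first bound, $\E_{X_0}D(X_0)=\sum_v\E[X_0(v)X^{(X_0)}_{\tminuss}(v)]$, and I would estimate each summand by conditioning on the update sequence (which is independent of $X_0$): if $v$ receives no update along $(0,\tminuss]$, an event of probability $e^{-\tminuss}$, then $X_0(v)X^{(X_0)}_{\tminuss}(v)=1$; otherwise the monotone grand coupling makes $\E[X^{(X_0)}_{\tminuss}(v)\mid X_0]$ coordinatewise nondecreasing in $X_0$, and together with the global spin--flip symmetry this forces $\E[X_0(v)X^{(X_0)}_{\tminuss}(v)\mid\text{update sequence}]\ge0$. Hence $\E[X_0(v)X^{(X_0)}_{\tminuss}(v)]\ge e^{-\tminuss}$, and summing over $v\in V$ gives $\E_{X_0}D(X_0)\ge n\,e^{-\tminuss}=\sqrt n\,e^{w_n}$ since $\tminuss=\tfrac12\log n-w_n$.

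For concentration I would invoke the Efron--Stein inequality: flipping $X_0(u)$ changes the term $X_0(u)\sm^{X_0}_{\tminuss}(u)$ by at most $2$, while for $v\ne u$ it changes $X_0(v)\sm^{X_0}_{\tminuss}(v)$ by $|\sm^{X_0}_{\tminuss}(v)-\sm^{X_0^{(u)}}_{\tminuss}(v)|\le2\,\P(u\in\sH_v(0))$, because under a common update sequence $X^{(X_0)}_{\tminuss}(v)$ depends on $X_0(u)$ only when $u$ lies in the update history of $v$. Since $\{v:u\in\sH_v(0)\}\subseteq\cC_u\subseteq\hcC_u$, Lemma~\ref{lem-todo} bounds $\E|\hcC_u|$ by an absolute constant, so each coordinate changes $D$ by $O(1)$ uniformly in $X_0$, and Efron--Stein yields $\var_{X_0}(D)=O(n)$; Chebyshev then gives $\P_{X_0}(G)\to1$. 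The two variance estimates are the computation used for the lower bound in Theorem~\ref{mainthm-gen}: for any fixed initial state $x_0$, $\var(f_{x_0}(X_{\tminuss})\mid X_0=x_0)=\sum_{u,v}x_0(u)x_0(v)\Cov(X^{(x_0)}_{\tminuss}(u),X^{(x_0)}_{\tminuss}(v))\le\sum_{u,v}|\Cov(\cdot,\cdot)|\le\gamma n$ by Claim~\ref{clm:Correlation} (whose proof supplies the two--sided estimate $|\Cov|\le2\,\P(u\in\hcC_v)$ and $\sum_u 2\P(u\in\hcC_v)\le\gamma$), and the same bound in the limit $t\to\infty$ gives $\var_\pi(f_{X_0})\le\gamma n$.

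The step I expect to be the crux is the concentration of $D(X_0)$: it is here that the subcriticality of the information percolation clusters (Lemma~\ref{lem-todo}) is essential, ensuring that no single initial spin exerts more than $O(1)$ total influence on the time--$\tminuss$ magnetization profile; moreover one must phrase this influence through the update--history events $\{u\in\sH_v(0)\}$, which do not depend on $X_0$, so that the Efron--Stein estimate is uniform over the quenched configuration. By comparison, the first--moment lower bound is a short monotonicity--and--symmetry argument, and the variance bounds for $f_{X_0}$ are routine once Claim~\ref{clm:Correlation} is in hand.
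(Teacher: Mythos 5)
Your overall strategy coincides with the paper's in every structural respect: the same distinguishing statistic $\sum_v X_0(v)\,\sigma(v)$, the same monotonicity-plus-symmetry argument for the first moment $\E_{X_0} D(X_0)\ge \sqrt{n}\,e^{w_n}$, and the same information-percolation cluster bounds (Lemma~\ref{lem-todo}, Claim~\ref{clm:Correlation}) underlying all the variance estimates. The one place you diverge is in extracting the quenched statement from the annealed one: the paper bounds the \emph{joint} variance $\var\big(\sum_u X_0(u)X_{\tminuss}(u)\big)\le c_1 n$ via a covariance estimate using the event $\cE_{u,v}=\{u\in\cC_v\}\cup\{v\in\sH_u(0)\}\cup\{u\in\sH_v(0)\}$, then applies Markov's inequality to the conditional failure probability; you instead decompose into variance of the conditional mean $D(X_0)$ (controlled by Efron--Stein) plus a uniform bound on the conditional variance of $f_{X_0}(X_{\tminuss})$. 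Both packagings work; yours makes the two sources of fluctuation more explicit, the paper's is marginally shorter.

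There is, however, a genuine error in your Efron--Stein step. The containment $\{v: u\in\sH_v(0)\}\subseteq\cC_u$ is false in general: if $u\in\sH_v(0)$ then the space-time point $(u,0)$ lies in the cluster of $(v,\tminuss)$, but that cluster need not be $\cC_u$ (the cluster of $(u,\tminuss)$), since $\sH_u$ may die out before reaching time $0$, leaving $(u,\tminuss)$ and $(u,0)$ in distinct components. This is precisely why the paper's event $\cE_{u,v}$ lists $u\in\sH_v(0)$ and $v\in\sH_u(0)$ as alternatives \emph{separate} from $u\in\cC_v$. The repair is straightforward: all $v$ with $u\in\sH_v(0)$ belong to $\hsH_{\hcX_{u,0}}$, the set of vertices ever visited by the cluster of the space-time point $(u,0)$, and Lemma~\ref{lem-todo} bounds $\E\,|\hsH_{\hcX_{u,0}}|$ by an absolute constant uniformly in $u$ (this is the same quantity the paper uses to control $\P(\cE_{u,v})$). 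With that substitution, $\max_u\sum_v\P(u\in\sH_v(0))=O(1)$ holds, your Efron--Stein bound $\var_{X_0}(D)=O(n)$ follows, and the rest of the argument is correct.
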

\begin{proof}
Note that by the monotonicity of the update rules, for any update history of $u$, the spin at $u$ is a monotone function of $X_0$.  With probability $e^{-t}$ the vertex $u$ is never updated in which case $X_t(u)=X_0(u)$.  Since by symmetry $\E[X_t(u)]=0$, it follows that
\[
\E[X_t(u) \mid X_0(u)=+1] \geq e^{-t}\,, \quad \E[X_t(u) \mid X_0(u)=-1] \leq  - e^{-t}\,.
\]
Thus we have that $\E [X_0(u) X_t(u)] \geq e^{-t}$ and so
\[
\E\Big[ \sum_u X_0(u) X_{\tminuss}(u)\Big] \geq n e^{-\tminuss} = \sqrt{n}e^{w_n}\,.
\]
Let $\cE_{u,v}$ be the event that $u\in \cC_v$ or  $v\in\sH_u(0)$ or $u\in  \sH_v(0)$ for the history developed from time $\tminuss$. Similarly to Claim~\ref{clm:Correlation}, let $X_t'$ and $X_t''$ be two independent copies of the dynamics.  By exploring the histories we may couple $X_t$ with $X_t'$ and $X_t''$ so that, on the event $\cE^c_{u,v}$, the history of $v$ in $X_t$ is equal to the history of $v$ in $X_t'$ and the history of $u$ in $X_t$ is equal to the history of $u$ in $X_t''$.  Hence,
\begin{align*}
\E\Big[X_0(u) X_{\tminuss}(u) \, X_0(v) X_{\tminuss}(v)\Big] &\leq \E\Big[ X_0'(u) X_{\tminuss}'(u) \Big]\E\Big[ X_0''(v) X_{\tminuss}''(v)\Big] + 2\P(\cE_{u,v})\,,
\end{align*}
yielding $\Cov\big(X_0(u) X_{\tminuss}(u)\,,\, X_0(v) X_{\tminuss}(v)\big) \leq 2\P(\cE_{u,v})$. By Lemma~\ref{lem-todo},
\[
\sum_u \Cov\Big(X_0(u) X_{\tminuss}(u)\,,\,X_0(v) X_{\tminuss}(v)\Big) \leq c_1\,.
\]
and so
\[
\var\Big( \sum_u X_0(u) X_{\tminuss}(u) \Big) \leq c_1 n\,.
\]
Thus, by Chebyshev's inequality we infer that
\[
\P\Big(\sum_u X_0(u) X_{\tminuss}(u) > \tfrac12 \sqrt{n} e^{w_n}\Big) \geq  1 - O\left(e^{-2w_n}\right)\,,
\]
and so by Markov's inequality,
\begin{equation}\label{e:quenchedTestA}
\P\bigg(\P\Big(\sum_u X_0(u)X_{\tminuss}(u) > \tfrac12 \sqrt{n} e^{w_n} \;\Big|\; X_0\Big) \geq 1 - e^{-w_n}\bigg) \geq 1 - O\left(e^{-w_n}\right) \to 1\,.
\end{equation}
By the exponential decay of correlations of $Y$ and the fact that it is independent of $X$ we have that
\[
\var \Big(\sum_{u} X_0(u) Y_{\tminuss}(u) \;\Big|\; X_0\Big) \leq c_2 n
\]
for some $c_2>0$. Thus, since $\E \left[\sum_{u} X_0(u) Y_{\tminuss}(u) \mid X_0\right]=0$,
it follows that
\begin{equation}\label{e:quenchedTestB}
\P\Big( \sum_{u} X_0(u) Y_{\tminuss}(u)  > \tfrac12 \sqrt{n} e^{w_n} \;\Big|\; X_0 \Big) = O\left(e^{-2w_n}\right)\to 0
\end{equation}
uniformly in $X_0$.  Comparing equations \eqref{e:quenchedTestA} and \eqref{e:quenchedTestB} completes the result.
\end{proof}

\begin{bibdiv}
\begin{biblist}

\bib{AH}{article}{
   author={Aizenman, M.},
   author={Holley, R.},
   title={Rapid convergence to equilibrium of stochastic Ising models in the Dobrushin Shlosman regime},
   conference={
      title={},
      address={Minneapolis, Minn.},
      date={1984--1985},
   },
   book={
      series={IMA Vol. Math. Appl.},
      volume={8},
      publisher={Springer},
      place={New York},
   },
   date={1987},
   pages={1--11},
}

\bib{Aldous}{article}{
  author = {Aldous, David},
  title = {Random walks on finite groups and rapidly mixing {M}arkov chains},
  booktitle = {Seminar on probability, XVII},
  series = {Lecture Notes in Math.},
  volume = {986},
  pages = {243--297},
  publisher = {Springer},
  address = {Berlin},
  year = {1983},
}


\bib{AD}{article}{
  author = {Aldous, David},
  author = {Diaconis, Persi},
  title = {Shuffling cards and stopping times},
  journal = {Amer. Math. Monthly},
  volume = {93},
  pages = {333--348},
  year = {1986},
}




\bib{Diaconis}{article}{
  author = {Diaconis, Persi},
  title = {The cutoff phenomenon in finite {M}arkov chains},
  journal = {Proc. Nat. Acad. Sci. U.S.A.},
  volume = {93},
  year = {1996},
  number = {4},
  pages = {1659--1664},
}

\bib{DGM}{article}{
   author={Diaconis, Persi},
   author={Graham, R. L.},
   author={Morrison, J. A.},
   title={Asymptotic analysis of a random walk on a hypercube with many
   dimensions},
   journal={Random Structures Algorithms},
   volume={1},
   date={1990},
   number={1},
   pages={51--72},
}

\bib{DS1}{article}{
   author={Diaconis, Persi},
   author={Saloff-Coste, Laurent},
   title={Comparison techniques for random walk on finite groups},
   journal={Ann. Probab.},
   volume={21},
   date={1993},
   number={4},
   pages={2131--2156},
}

\bib{DS2}{article}{
   author={Diaconis, Persi},
   author={Saloff-Coste, Laurent},
   title={Comparison theorems for reversible Markov chains},
   journal={Ann. Appl. Probab.},
   volume={3},
   date={1993},
   number={3},
   pages={696--730},
}

\bib{DS}{article}{
   author={Diaconis, P.},
   author={Saloff-Coste, L.},
   title={Logarithmic Sobolev inequalities for finite Markov chains},
   journal={Ann. Appl. Probab.},
   volume={6},
   date={1996},
   number={3},
   pages={695--750},
}


\bib{DiSh}{article}{
  author = {Diaconis, Persi},
  author = {Shahshahani, Mehrdad},
  title = {Generating a random permutation with random transpositions},
  journal = {Z. Wahrsch. Verw. Gebiete},
  volume = {57},
  year = {1981},
  number = {2},
  pages = {159--179},
}

\bib{DiSh2}{article}{
   author={Diaconis, Persi},
   author={Shahshahani, Mehrdad},
   title={Time to reach stationarity in the Bernoulli-Laplace diffusion
   model},
   journal={SIAM J. Math. Anal.},
   volume={18},
   date={1987},
   number={1},
   pages={208--218},
}

\bib{DLP}{article}{
   author={Ding, Jian},
   author={Lubetzky, Eyal},
   author={Peres, Yuval},
   title={The mixing time evolution of Glauber dynamics for the mean-field Ising model},
   journal={Comm. Math. Phys.},
   volume={289},
   date={2009},
   number={2},
   pages={725--764},
}

\bib{Dobrushin}{article}{
   author={Dobrushin, R. L.},
   title={Markov processes with a large number of locally interacting
   components: Existence of a limit process and its ergodicity},
   language={Russian},
   journal={Problemy Pereda\v ci Informacii},
   volume={7},
   date={1971},
   number={2},
   pages={70--87},
}



\bib{DoSh}{article}{
   author={Dobrushin, R. L.},
   author={Shlosman, S. B.},
   title={Constructive criterion for the uniqueness of Gibbs field},
   conference={
      title={Statistical physics and dynamical systems},
      address={K\"oszeg},
      date={1984},
   },
   book={
      series={Progr. Phys.},
      volume={10},
      publisher={Birkh\"auser Boston, Boston, MA},
   },
   date={1985},
   pages={347--370},
}

\bib{Holley1}{article}{
   author={Holley, Richard A.},
   title={An ergodic theorem for interacting systems with attractive
   interactions},
   journal={Z. Wahrscheinlichkeitstheorie und Verw. Gebiete},
   volume={24},
   date={1972},
   pages={325--334},
}



\bib{HoSt1}{article}{
   author={Holley, Richard A.},
   author={Stroock, Daniel W.},
   title={Logarithmic Sobolev inequalities and stochastic Ising models},
   journal={J. Statist. Phys.},
   volume={46},
   date={1987},
   number={5-6},
   pages={1159--1194},
}



%


\bib{LLP}{article}{
  title   = {Glauber dynamics for the Mean-field Ising Model: cut-off, critical power law, and metastability},
  author  = {Levin, David A.},
  author = {Luczak, Malwina},
  author = {Peres, Yuval},
  journal={Probab. Theory Related Fields},
   volume={146},
   date={2010},
   number={1--2},
   pages={223--265},
}

\bib{LPW}{book}{
  title={{Markov chains and mixing times}},
  author={Levin, D.A.},
  author={Peres, Y.},
  author={Wilmer, E.L.},
  journal={American Mathematical Society},
  year={2008},
}

\bib{Liggett}{book}{
   author={Liggett, Thomas M.},
   title={Interacting particle systems},
   series={Classics in Mathematics},
   publisher={Springer-Verlag},
   place={Berlin},
   date={2005},
   pages={xvi+496},
}


\bib{LS2}{article}{
   author = {Lubetzky, Eyal},
   author = {Sly, Allan},
   title={Cutoff phenomena for random walks on random regular graphs},
   journal={Duke Math. J.},
   volume={153},
   date={2010},
   number={3},
   pages={475--510},
}

\bib{LSexp}{article}{
   author={Lubetzky, Eyal},
   author={Sly, Allan},
   title={Explicit expanders with cutoff phenomena},
   journal={Electron. J. Probab.},
   volume={16},
   date={2011},
   pages={no. 15, 419--435},
}


\bib{LS1}{article}{
    author = {Lubetzky, Eyal},
    author = {Sly, Allan},
    title = {Cutoff for the Ising model on the lattice},
    journal = {Invent. Math.},
    volume={191},
    date={2013},
    number={3},
    pages={719-–755},
}

\bib{LS3}{article}{
    author = {Lubetzky, Eyal},
    author = {Sly, Allan},
    title = {Cutoff for general spin systems with arbitrary boundary conditions},
    journal = {Comm. Pure. Appl. Math.},
    volume={67},
    date={2014},
    number={6},
    pages={982--1027},
}

\bib{LS4}{article}{
    author = {Lubetzky, Eyal},
    author = {Sly, Allan},
    title = {Information percolation for the Ising model:
cutoff in three dimensions up to criticality},
    status = {preprint},
    note={Available at \texttt{arXiv:1401.6065} (2014)},
}


\bib{Martinelli97}{article}{
   author={Martinelli, Fabio},
   title={Lectures on Glauber dynamics for discrete spin models},
   conference={
      title={Lectures on probability theory and statistics},
      address={Saint-Flour},
      date={1997},
   },
   book={
      series={Lecture Notes in Math.},
      volume={1717},
      publisher={Springer},
      place={Berlin},
   },
   date={1999},
   pages={93--191},
}


\bib{MO}{article}{
   author={Martinelli, F.},
   author={Olivieri, E.},
   title={Approach to equilibrium of Glauber dynamics in the one phase
   region. I. The attractive case},
   journal={Comm. Math. Phys.},
   volume={161},
   date={1994},
   number={3},
   pages={447--486},
}

\bib{MO2}{article}{
   author={Martinelli, F.},
   author={Olivieri, E.},
   title={Approach to equilibrium of Glauber dynamics in the one phase
   region. II. The general case},
   journal={Comm. Math. Phys.},
   volume={161},
   date={1994},
   number={3},
   pages={487--514},
}

\bib{MOS}{article}{
   author={Martinelli, F.},
   author={Olivieri, E.},
   author={Schonmann, R. H.},
   title={For $2$-D lattice spin systems weak mixing implies strong mixing},
   journal={Comm. Math. Phys.},
   volume={165},
   date={1994},
   number={1},
   pages={33--47},
}

\bib{MP}{article}{
   author={Miller, Jason},
   author={Peres, Yuval},
   title={Uniformity of the uncovered set of random walk and cutoff for
   lamplighter chains},
   journal={Ann. Probab.},
   volume={40},
   date={2012},
   number={2},
   pages={535--577},
}


\bib{PW}{article}{
   author={Propp, James Gary},
   author={Wilson, David Bruce},
   title={Exact sampling with coupled Markov chains and applications to
   statistical mechanics},
   booktitle={Proceedings of the Seventh International Conference on Random
   Structures and Algorithms (Atlanta, GA, 1995)},
   journal={Random Structures Algorithms},
   volume={9},
   date={1996},
   number={1-2},
   pages={223--252},
}
		
\bib{SaloffCoste}{article}{
   author={Saloff-Coste, Laurent},
   title={Lectures on finite Markov chains},
   conference={
      title={Lectures on probability theory and statistics},
      address={Saint-Flour},
      date={1996},
   },
   book={
      series={Lecture Notes in Math.},
      volume={1665},
      publisher={Springer},
      place={Berlin},
   },
   date={1997},
   pages={301--413},
}

\bib{SaloffCoste2}{article}{
  author = {Saloff-Coste, Laurent},
  title = {Random walks on finite groups},
  booktitle = {Probability on discrete structures},
  series = {Encyclopaedia Math. Sci.},
  volume = {110},
  pages = {263--346},
  publisher = {Springer},
  address = {Berlin},
  date = {2004},
}


\bib{SZ1}{article}{
   author={Stroock, Daniel W.},
   author={Zegarli{\'n}ski, Bogus{\l}aw},
   title={The equivalence of the logarithmic Sobolev inequality and the Dobrushin-Shlosman mixing condition},
   journal={Comm. Math. Phys.},
   volume={144},
   date={1992},
   number={2},
   pages={303--323},
}


\bib{SZ3}{article}{
   author={Stroock, Daniel W.},
   author={Zegarli{\'n}ski, Bogus{\l}aw},
   title={The logarithmic Sobolev inequality for discrete spin systems on a lattice},
   journal={Comm. Math. Phys.},
   volume={149},
   date={1992},
   number={1},
   pages={175--193},
}
%
%
%
%
%

\end{biblist}
\end{bibdiv}

\end{document}